\documentclass[12pt]{amsart}
\usepackage{srcltx}
\usepackage{hyperref}

\newcounter{ourcount}
\setcounter{ourcount}{1}

\voffset-1.2cm
\hoffset-1.0cm
\textheight45\baselineskip
\advance\textheight\topskip
\textwidth 39.5pc
\oddsidemargin 20pt
\evensidemargin 20pt
\addtolength{\parskip}{4pt}
\addtolength{\textfloatsep}{\baselineskip}

\usepackage{amsmath,amsfonts,amsthm,amssymb,amscd}
\allowdisplaybreaks

 \usepackage{graphicx}
 \usepackage[curve,matrix,arrow,color]{xy}

 \usepackage[all,color]{xypic}
 \usepackage[usenames,dvipsnames]{color}
 \xyoption{line}

\usepackage[mathscr]{eucal}
\usepackage[OT2,T1]{fontenc}

\newcommand{\bref}[1]{\textbf{\ref{#1}}}

\newcommand{\half}{%
  \mathchoice{\ffrac{1}{2}}{\frac{1}{2}}{\frac{1}{2}}{\frac{1}{2}}}
\newcommand{\rmi}{\mathrm{i}}
\newcommand{\id}{\mathrm{id}}

\newcommand{\Walg}{\mathcal{W}}

                    %%% Fields

		%%% Modular
\newcommand{\SLiiZ}{SL(2,\oZ)}
\newcommand{\cZ}{\mathsf{Z}}
\newcommand{\modS}{\mathscr{S}}
\newcommand{\modT}{\mathscr{T}}
\newcommand{\repLy}{\pi}

                    %%% Algebras
\newcommand{\Q}{\mathsf{Q}}
\newcommand{\Salg}{\mathsf{S}}

\newcommand{\B}{\mathsf{B}}
\newcommand{\Bel}{\mathsf{b}}
\newcommand{\Cel}{\mathsf{c}}

\newcommand{\z}{\mathsf{z}}

\newcommand{\copS}{\Delta^{\Salg}}

                    %%% Categories
\newcommand{\cat}{\mathcal{C}}
\newcommand{\catD}{\mathcal{D}}
\newcommand{\catSF}{\mathcal{S\hspace{-1.2pt}F}}
\newcommand{\svect}{\mathrm{{\bf Svect}}}
\newcommand{\vect}{\mathrm{{\bf vect}}}

\newcommand{\rep}{\mathrm{\bf Rep}\,}

\newcommand{\repsv}{\mathrm{\bf Rep}_{\mathrm{s.v.}}}
\newcommand{\repQ}{\rep\Q}
\newcommand{\repS}{\rep\Salg}

                     %%% Functors:
\newcommand{\fun}{\mathcal{F}}    %%% a functor
\newcommand{\funE}{\mathcal{E}}   %%% From C to RepQ

\newcommand{\funD}{\mathcal{D}}   %%% From C to RepS
\newcommand{\funCS}{\funD}        %%% From C to RepS
\newcommand{\funSQ}{\mathcal{G}}  %%% From RepS to RepQ

                    %%% Tensor functor iso
\newcommand{\isoD}{\Delta}
\newcommand{\isoG}{\Gamma}
\newcommand{\del}{\delta}  %%% the elements defining tensor iso

                     %%% tensor products
\newcommand{\tensor}{\otimes}

\newcommand{\svtensor}{\tensor_{\svect}}
\newcommand{\Ctensor}{*}
\newcommand{\Qtensor}{\tensor_{\repQ}}
\newcommand{\Stensor}{\tensor_{\repS}}

                            %%% Notation for coassociators
\newcommand{\as}{\Phi}      %%% in Q
\newcommand{\Sas}{\Lambda}  %%% in S

 %%% for associators in a category
\newcommand{\assocQ}{\alpha^{\repQ}} %%% for associators in RepQ
\newcommand{\assocS}{\alpha^{\repS}} %%% for associators in RepS

                %%% Braidings:
                    %%% in Vect
\newcommand{\sflip}{\tau^{\mathrm{s.v.}}}   %%% in Svect

\newcommand{\brC}{c}               %%% for braiding in C
%\newcommand{\brC}{\sigma^{(\catSF)}}   %%% for braiding in C
               %%% for braiding in RepS
\newcommand{\brQ}{\sigma}          %%% for braiding in RepQ

\newcommand{\Rst}{R^{\mathrm{(st.)}}}   %%% The standard R-matrix
\newcommand{\Rsf}{\mathcal{R}}          %%% R-matrix in C

      %%% Tensor identity

		% Special elements
\newcommand{\Salpha}{\boldsymbol{\alpha}}
\newcommand{\Sbeta}{\boldsymbol{\beta}}
\newcommand{\Sgamma}{\boldsymbol{\gamma}}

\newcommand{\cchi}{\boldsymbol{\chi}}
\newcommand{\rrho}{\boldsymbol{\rho}}
\newcommand{\bkappa}{\pmb{\boldsymbol{\varkappa}}}
\newcommand{\vvarphi}{\pmb{\varphi}}

\newcommand{\xp}{\mathsf{x}^+}
\newcommand{\xm}{\mathsf{x}^-}
\newcommand{\xpm}{\mathsf{x}^{\pm}}
\newcommand{\xmp}{\mathsf{x}^{\mp}}
\newcommand{\LL}{\mathsf{L}}

\newcommand{\q}{\mathfrak{q}}
\newcommand{\ffrac}[2]{\mbox{\footnotesize$\displaystyle\frac{#1}{#2}$}}

\newcommand{\one}{\boldsymbol{1}}%{1\kern-4pt 1}

\newcommand{\UresSL}[1]{\overline{U}_{\q} s\ell(#1)}
\newcommand{\iUresSL}[1]{\overline{U}_{\rmi} s\ell(#1)}

\newcommand{\oC}{\mathbb{C}}

\newcommand{\K}{\mathsf{K}}
\newcommand{\F}{\mathsf{F}}

\newcommand{\E}{\mathsf{E}}

\newcommand{\ff}{\mathsf{f}}

\newcommand{\algGr}{\mathsf{G}}

%%% Bases

\newcommand{\XX}{\mathsf{X}} %<---- irrep of qalg

\newcommand{\PP}{\mathsf{P}}

\newcommand{\qtr}{\mathrm{qTr}}

\newcommand{\ad}{\mathrm{Ad}}
\newcommand{\Ch}{\mathfrak{Ch}}

\newcommand{\radmap}{\boldsymbol{\phi}}
\newcommand{\drmap}{\boldsymbol{\chi}}

%%%%%%%%%%%%
% special elements
%%%%%%%%%%%%

\newcommand{\coint}{{\boldsymbol{c}}}
\newcommand{\rint}{{\boldsymbol{\mu}}}

\newcommand{\balance}{{\boldsymbol{g}}}
\newcommand{\ribbon}{{\boldsymbol{v}}}
\newcommand{\sqs}{{\boldsymbol{u}}}

                   %%% Casimir and canonical central elements
\newcommand{\cas}{\boldsymbol{C}}

\newcommand{\idem}{\boldsymbol{e}}
\newcommand{\nilp}{\boldsymbol{w}}

%\newcommand{\binom}[2]{#1\top #2}
%\DeclareMathAlphabet{\mathpzc}{OT1}{pzc}{m}{it}

\newcommand{\oZ}{\mathbb{Z}}

%%%%%%%%%%%%%%%%%%%%%%%%%%%%%%%%%%%%%%%%%%%%%%%%%%%%
%       Theorem environments
%\theoremstyle{plain} %% This is the default
\swapnumbers
\newtheorem{Thm}[subsection]{Theorem}

\newtheorem{Lemma}[subsection]{Lemma}

\newtheorem{Prop}[subsection]{Proposition}

\newtheorem{Cor}[subsection]{Corollary}

\theoremstyle{definition}
\newtheorem{Dfn}[subsection]{Definition}

\newtheorem{Rem}[subsection]{Remark}%[section]

\begin{document}
\title[quasi-Hopf algebra of symplectic fermions]{%
Symplectic fermions and \\
a quasi-Hopf algebra structure on $\iUresSL2$}

\author{A.M.~Gainutdinov, I.~Runkel}
\address{AMG: Fachbereich Mathematik, Universit\"at Hamburg, Bundesstra\ss e 55,
20146 Hamburg \\
and  DESY, Theory Group, Notkestrasse 85, Bldg. 2a, 22603 Hamburg, Germany}
\email{azat.gainutdinov@uni-hamburg.de}

\address{IR: Fachbereich Mathematik, Universit\"at Hamburg, Bundesstra\ss e 55,
20146 Hamburg, Germany}
\email{ingo.runkel@uni-hamburg.de}

\begin{abstract}
We consider the (finite-dimensional) restricted quantum group $\UresSL2$ at $\q=\rmi$. We show that $\iUresSL2$ does not allow for a	universal
R-matrix, even though $U \tensor V \cong V \tensor U$ holds for all finite-dimensional representations $U,V$ of $\iUresSL2$. We then give an explicit coassociator $\as$ and a universal R-matrix $R$ such that $\iUresSL2$ becomes a quasi-triangular quasi-Hopf algebra.

Our construction is motivated by the two-dimensional chiral conformal field theory of symplectic fermions with central charge $c=-2$. There, a braided monoidal category, $\catSF$, has been computed from the factorisation and monodromy properties of conformal blocks,
and we prove 
 that $\rep(\iUresSL2,\as,R)$ is braided monoidally equivalent to $\catSF$.
\end{abstract}

\maketitle

\setcounter{tocdepth}{1}
\tableofcontents
    
\thispagestyle{empty}
\newpage
    
\section{Introduction}

Recall the definition of the 
\textit{restricted}\footnote{This algebra was named ``restricted'' in~\cite{[FGST]} and this name has not to be confused with the ``restricted form'' of $U_{\q} s\ell(2)$ -- 
Lusztig's integral form, see e.g.~\cite{ChPr}.}
quantum group $\UresSL2$, with $\q = e^{\rmi\pi/p}$ and $p$ is positive integer~\cite{ChPr,[FGST]}.
 It has the generators $\E$, $\F$, and
$\K^{\pm1}$ satisfying the standard relations for the quantum $s\ell(2)$,
\begin{equation}\label{Uq-relations}
  \K\E\K^{-1}=\q^2\E\ ,\quad
  \K\F\K^{-1}=\q^{-2}\F\ ,\quad
  [\E,\F]=\ffrac{\K-\K^{-1}}{\q-\q^{-1}}\ ,
\end{equation}
and the  additional relations
\begin{equation}\label{root-rel}
  \E^{p}=\F^{p}=0\ ,\quad \K^{2p}=\one \ .
\end{equation}
The comultiplication, counit and antipode are given by
\begin{align}
\label{eq:antipode}
  \Delta(\E)&=\one\otimes \E+\E\otimes \K\ ,\quad &
  \Delta(\F)&=\K^{-1}\otimes \F+\F\otimes\one\ ,\quad &
  \Delta(\K)&=\K\otimes \K\ .
  \nonumber\\ 
\epsilon(\E)&=0 \ ,& \epsilon(\F)&=0\ ,\quad & \epsilon(\K)&=1\ ,\nonumber
\\
S(\E)&=-\E\K^{-1}\ ,\quad& S(\F)&=-\K\F\ ,\quad& S(\K)&=\K^{-1}\ .
\end{align}
This defines a Hopf algebra of dimension $\mathrm{dim}\,\UresSL2 = 2p^3$.

There is a close relation~\cite{Feigin:2005xs} between the category
	$\rep\UresSL2$
of finite dimensional representations of $\UresSL2$ and the category $\rep\Walg_p$ of modules of the 
$W_{1,p}$ triplet
vertex operator algebra 
$\Walg_p$~\cite{[K-first],Gaberdiel:1996np,Fuchs:2003yu,Carqueville:2005nu,[AM]} 
which occurs in logarithmic rational conformal field theory. It is known that
\begin{itemize}		
		\item for $p \ge 2$, $\rep\UresSL2$ is equivalent to $\rep\Walg_p$
	as a $\oC$-linear category \cite{[AM],Nagatomo:2009xp},
		\item for $p \ge 3$, $\rep\UresSL2$ is \textit{not} equivalent to $\rep\Walg_p$ as
	a braided tensor category, or even only as a tensor category.
\end{itemize}
The second point follows as $\rep\Walg_p$ is
braided~\cite{[HLZ],Huang:2013jza,Tsuchiya:2012ru},
but $\rep\UresSL2$ is not braidable since there are 
	finite-dimensional 
$\UresSL2$-representations $U,V$ such that $U \tensor V$ is not isomorphic to $V \tensor U$~\cite{KS}.

The situation for $p=2$ is special: In this case $\q=\rmi$ and for all finite-dimensional modules $U$, $V$ over $\iUresSL2$ we have isomorphisms $U\tensor V \cong V\tensor U$, see~\cite{KS}. However, we will show that
it is not possible to chose a natural family of such isomorphisms that satisfy the hexagon condition for the braiding:

\begin{Thm}\label{thm:1}
The category $\rep\iUresSL2$  is not braidable, or, equivalently, $\iUresSL2$ has no universal R-matrix.
\end{Thm}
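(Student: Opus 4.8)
The plan is to assume, for contradiction, that $\rep\iUresSL2$ carries a braiding $c_{U,V}\colon U\tensor V\to V\tensor U$ (equivalently that $\iUresSL2$ has a universal R-matrix, via $c_{U,V}=\flip\circ(R\cdot-)$) and to play the hexagon axioms off against the explicit module structure at $\q=\rmi$. The subtlety flagged in the abstract—that $U\tensor V\cong V\tensor U$ holds pointwise—tells me the obstruction is not the mere non-existence of a natural commutativity isomorphism; and since a braiding is recovered later by deforming only the coassociator, I expect the genuine obstruction to live in the hexagon identities taken with the strict (trivial) associator, where $c_{A\tensor B,C}=(c_{A,C}\tensor\id_B)(\id_A\tensor c_{B,C})$ and dually, with no associator insertions. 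This is the relation I plan to exploit.

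The computation localises on the two-dimensional simple projective module, which I denote $\repX$ (with $\K$ acting with eigenvalues $\rmi,-\rmi$). First I would record the structural input: $\repX$ is self-dual, with $\coev\colon\one\to\repX\tensor\repX$ and $\ev\colon\repX\tensor\repX\to\one$, and its quantum dimension vanishes, $\qdim\repX=[2]_{\q}=\q+\q^{-1}=0$; the square $\PP:=\repX\tensor\repX$ is the indecomposable projective cover of $\one$, whose endomorphism algebra is local and two-dimensional, $\Endo(\PP)=\oC\,\id\oplus\oC\,n$ with $n=\coev\circ\ev$ and $n^2=0$; and the cube $\repX^{\tensor 3}$ is semisimple, since all its composition factors are projective simples. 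A braiding would then force $c_{\repX,\repX}\in\mathrm{Aut}(\PP)$, hence $c_{\repX,\repX}=\lambda\,\id+\mu\,n$ with $\lambda\neq0$. Setting $N_1=n\tensor\id$ and $N_2=\id\tensor n$ on $\repX^{\tensor 3}$, these are cup–cap morphisms, and because $\qdim\repX=0$ they obey the Temperley–Lieb relations at loop parameter $0$, namely $N_i^2=0$ together with $N_1N_2N_1=N_1$ and $N_2N_1N_2=N_2$.

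Feeding in the hexagon identities and the naturality of $c$ against $\coev$ and $\ev$ turns the problem into a small scalar system for $\lambda,\mu$; for instance naturality against $\coev$, combined with $c_{\one,\repX}=\id$ and the first hexagon, should yield $(\lambda^2+\mu^2)(\coev\tensor\id)+\lambda\mu\,(\id\tensor\coev)=(\id\tensor\coev)$, and symmetrically for $\ev$. I expect this to pin $c_{\repX,\repX}$ down only to a one-parameter family (e.g.\ $\lambda\mu=1$ and $\lambda^2+\mu^2=0$, so $\lambda^4=-1$), which already shows there is \emph{no} obstruction supported on a single tensor square: the hexagon is solvable on the triple $(\repX,\repX,\repX)$ in isolation. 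The heart of the proof is therefore global—a single universal R-matrix must induce mutually compatible braidings on all simples and all of their tensor products at once—and this is where I anticipate the main difficulty. The plan to finish is to bring the nontrivial one-dimensional module $\ell$ (and the induced identification of the second two-dimensional simple as $\repX\tensor\ell$) into the hexagon alongside $\repX$, thereby linking the scalar $\lambda$ above, propagated through the radical filtration of $\PP=\repX\tensor\repX$, to the sign $\beta=\pm1$ that the hexagon forces on $c_{\ell,\ell}$; the incompatibility of $\lambda^4=-1$ with $\beta^2=1$ is what I expect to collapse the system. Equivalently, and probably cleaner to make fully rigorous, I would reduce to the $\K\tensor\K$-graded linear equations $R\,\Delta(\E)=\Delta^{\mathrm{op}}(\E)\,R$ and $R\,\Delta(\F)=\Delta^{\mathrm{op}}(\F)\,R$ together with the two quasi-triangularity identities, and show this finite system admits no invertible solution—the failure being precisely what the nontrivial coassociator constructed later repairs.
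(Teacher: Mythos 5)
Your setup is sound as far as it goes: the observation that the obstruction must sit in the hexagon identities with the trivial associator (not in pointwise non-commutativity of $\tensor$), the structural facts about $\repX=\XX^+_2$ ($\qdim\repX=0$, $\repX\tensor\repX\cong\PP$ with $\Endo(\PP)\cong\oC[n]/(n^2)$, Temperley--Lieb relations at loop parameter $0$, semisimplicity of $\repX^{\tensor 3}$), and even the value $\lambda^4=-1$ are all consistent with what happens in the quasi-Hopf repair (the R-matrix component $R^{11}$ carries the prefactor $\beta$ with $\beta^4=-1$). But the proposal never actually proves the theorem. You yourself concede that the hexagon is solvable on the triple $(\repX,\repX,\repX)$ in isolation, so the entire content of the statement is the global incompatibility --- and at exactly that point the argument switches to ``I expect'' and ``I anticipate.'' The specific mechanism you predict, a clash between $\lambda^4=-1$ and $c_{\ell,\ell}^2=1$ for the one-dimensional module $\ell=\XX^-_1$, is unsubstantiated and looks doubtful: in the grading by $\K^2$ used in the paper, $\ell$ lies in the sector $\repQ_0$ where the coassociator components are trivial, the invertible-object constraint $c_{\ell,\ell}^2=1$ is satisfied (with $c_{\ell,\ell}=-1$) in the genuinely braided quasi-Hopf category as well, so this constraint cannot by itself be the obstruction, and no derivation is offered for how $\lambda$ ``propagated through the radical filtration of $\PP$'' forces an inconsistency with it.

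Your fallback --- reduce to the finite system $R\Delta(x)=\Delta^{\mathrm{op}}(x)R$ for $x=\E,\F,\K$ plus the counit conditions and the two hexagon identities, and show it has no invertible solution --- is precisely the paper's proof, but carrying it out is the proof, and it is absent from the proposal. In the paper, the linear conditions cut $\Q\tensor\Q$ (dimension $256$) down to a $99$-dimensional affine family of candidate R-matrices; substituting into the quadratic hexagon conditions and eliminating iteratively (by computer algebra) leads, after all linear and $\text{(parameter)}^2=0$ conditions are imposed, to a single quadratic $(x-a)(y-b)=0$ whose two branches each contradict the remaining equations. The need for this elimination over a large parameter space, ending in a nontrivial case split, is evidence that no simple two-scalar incompatibility of the kind you predict is known; the paper moreover runs the same elimination for the nontrivial $\oZ_2$-cocycle associator $\Phi_{-1}$, proving the stronger statement that $(\Q,\Phi_\eps)$ is not quasi-triangular for either $\eps$. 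As it stands, your proposal is a plausible reduction plus an unverified conjecture where the contradiction should appear, so the decisive step is missing.
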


	In the following, we will use the term ``R-matrix'' instead of ``universal R-matrix''.
	
	\medskip

	By Theorem~\bref{thm:1}, the category $\rep\iUresSL2$ cannot be tensor equivalent to $\rep\Walg_2$, as the latter is braided and the former not braidable.
	On the other hand, if we divide $\iUresSL2$ by the ideal generated by $\K^2-\one$, there is an	 R-matrix~\cite{Lus90,RT91,KM91} (see also~\cite[Thm.\,IX.7.1]{Kassel})
with the standard form
\begin{equation}\label{eq:R00-EF}
\Rst = \half\sum_{m,n=0,1}(-1)^{mn}\K^m\tensor\K^n\bigl(\one\tensor\one + 2\rmi \E\tensor\F\bigr) \ .
\end{equation}
In~\cite{[LN]}, R-matrices of this form were classified in 
restricted-type
 quantum groups for finite-dimensional simple complex Lie algebras with different quotients in the Cartan part. For  $\iUresSL2$, \cite{[LN]} indeed find no solution, and our Theorem~\bref{thm:1} extends this negative result from R-matrices of the form~\eqref{eq:R00-EF} to all R-matrices.

The
motivation behind the research presented in this paper was to find a suitable small modification of $\iUresSL2$ to make its representations agree with those of $\Walg_2$ as a braided tensor category.
We find that it is possible to define a {\sl quasi}-Hopf structure on $\iUresSL2$ (with the same algebraic relations, coproduct, counit and antipode) that makes the algebra quasi-triangular and extends the quasi-triangular Hopf-structure \eqref{eq:R00-EF} from the quotient algebra. We will now describe this quasi-triangular quasi-Hopf structure and then comment on the relation to $\rep\Walg_2$. Our conventions on quasi-Hopf algebras are collected in Appendix~\bref{app:qausi-Hopf}.

	The quasi-Hopf and quasi-triangular structure depend on a parameter 
\begin{equation}\label{eq:beta-param}	
	\beta \in \oC \quad \text{which satisfies} \quad  \beta^4=-1 \ .
\end{equation}	
Define the central idempotents
\begin{equation}\label{idem-01}
	\idem_0=\half(\one+\K^2) \ , \qquad  \idem_1=\half(\one-\K^2) \ .
\end{equation}
The coassociator $\as$, an invertible element in $(\iUresSL2)^{\otimes 3}$,
can be written component-wise as
\begin{align}\label{eq:as-intro}
\as = & ~ \idem_0\tensor\idem_0\tensor\idem_0 \\
&
+ \idem_1\tensor\idem_0\tensor\idem_0
+ \as^{010}\idem_0\tensor\idem_1\tensor\idem_0
+ \idem_0\tensor\idem_0\tensor\idem_1
\nonumber\\
& + \idem_1\tensor\idem_1\tensor\idem_0
  + \as^{101}\idem_1\tensor\idem_0\tensor\idem_1
  +\idem_0\tensor\idem_1\tensor\idem_1
\nonumber\\
& + \as^{111}\idem_1\tensor\idem_1\tensor\idem_1 \ ,
\nonumber
\end{align}
where, in a PBW basis with order $\E,\F,\K$,
\begin{align*}
\as^{010} =~& \one\tensor\one\tensor\one
    - (1 + \rmi) \E\tensor\K\tensor\F + (1 - \rmi) \F\K\tensor\K\tensor\E\K +
  2 \E\F\K\tensor\one\tensor\E\F\K\ ,
\\
\as^{101} =~& (\rmi -1) \one\tensor\F\tensor\E\K +
 \one\tensor\K\tensor\one +
 2 \one\tensor\E\F\tensor\E\F\K
 - (1 + \rmi)\one\tensor\E\K\tensor\F
\\
&- (1 + \rmi) \E\tensor\F\K\tensor\one +
 2 \rmi \E\tensor\E\F\K\tensor\F + (1 - \rmi)\F\K\tensor\E\tensor\one
\\
& -
 2 \rmi \F\K\tensor\E\F\K\tensor\E\K -
 2 \E\F\K\tensor\E\F\tensor\one\ ,
\\
\as^{111} =~& \rmi\beta^2\bigl\{ \, (1 + \rmi)\one\tensor\F\tensor\E\K -
 \one\tensor\K\tensor\one -
 2 \one\tensor\E\F\tensor\one -
 2 \rmi \one\tensor\E\F\tensor\E\F\K
\\
&  + (1 + \rmi) \one\tensor\E\K\tensor\F
  - (1 + \rmi) \E\tensor\one\tensor\F
  + (1 + \rmi) \E\tensor\F\K\tensor\one +
 2 \rmi \E\tensor\F\K\tensor\E\F\K
\\
&+
 2 \E\tensor\E\F\K\tensor\F
 - (1 + \rmi)\F\K\tensor\one\tensor\E\K
 + (1 + \rmi) \F\K\tensor\E\tensor\one +
 2 \rmi \F\K\tensor\E\tensor\E\F\K
\\
&+
 2 \F\K\tensor\E\F\K\tensor\E\K +
 2 \rmi \E\F\K\tensor\F\tensor\E\K
 + 2 \rmi \E\F\K\tensor\K\tensor\E\F\K -
 2 \rmi \E\F\K\tensor\E\F\tensor\one
\\
&
 + 2 \rmi \E\F\K\tensor\E\K\tensor\F 
 +4 \E\F\K\tensor\E\F\tensor\E\F\K
\, \bigr\}\ .
 \end{align*}
We take the same anti-automorphism $S$ on $\iUresSL2$ as antipode (see \eqref{eq:antipode}). We therefore have the same dual objects, but the duality maps get modified by evaluation and coevaluation elements $\Salpha$ and $\Sbeta$ (which are part of the definition of a quasi-Hopf algebra). We choose
\begin{equation}\label{eq:intro-al-be-def}
 \Salpha = \one\ ,\qquad \Sbeta =  \idem_0 -2\rmi\beta^2\cas\idem_1\ ,
\end{equation}
where $\cas := \F\E-\frac{\rmi}{4}(\K-\K^{-1})$ is the Casimir element.

Finally, we give an R-matrix
which extends the quasi-triangular structure~\eqref{eq:R00-EF} from the quotient by $\K^2-\one$ to the whole quantum group. We define an invertible element in $\iUresSL2\tensor\iUresSL2$ as
\begin{equation}\label{RQ}
R = R^{00}\idem_0\tensor\idem_0 + R^{01}\idem_0\tensor\idem_1
+ R^{10}\idem_1\tensor\idem_0 + R^{11}\idem_1\tensor\idem_1,
\end{equation}
with
\begin{align*}
R^{00} &= \half\bigl(\one\tensor\one + \one\tensor\K + \K\tensor\one - \K\tensor\K\bigr)\bigl\{\one\tensor\one +2\rmi\E\tensor\F\bigr\},
\\
R^{01} &= \half\bigl(\one\tensor\one -\rmi \one\tensor\K + \K\tensor\one + \rmi\K\tensor\K\bigr)\bigl\{\one\tensor\one
    +(1-\rmi)\F\K\tensor\E\K
\\
& \hspace{10em}
- (1-\rmi)\E\tensor\F +(1+\rmi)\E\F\K\tensor\one + 2\rmi\E\F\K\tensor\E\F\K\bigr\},
\\
R^{10} &= \half\bigl(\one\tensor\one + \one\tensor\K - \rmi \K\tensor\one + \rmi\K\tensor\K\bigr)
\bigl\{\one\tensor\K + (1 - \rmi) \F\K\tensor\E
\\
& \hspace{10em}
 + (1 - \rmi) \E\tensor\F\K  - (1 - \rmi) \one\tensor\E\F -
 2 \rmi \E\F\K\tensor\E\F\bigr\},\\
R^{11} &= \ffrac{\beta}{2}\bigl(\one\tensor\one -\rmi \one\tensor\K -\rmi \K\tensor\one + \K\tensor\K\bigr)
\bigl\{-\rmi \K\tensor\one + 2 \rmi \E\K\tensor\F
\\
& \hspace{10em}
+ (1 - \rmi)\K\tensor\E\F\K - (1 + \rmi)\E\F\tensor\one -
 2 \rmi \E\F\tensor\E\F\K
\bigr\}.
\end{align*}
We emphasise that $R^{00}$ here equals the standard 
	R-matrix
$\Rst$ in~\eqref{eq:R00-EF}.

\begin{Thm}\label{thm:2}
	The Hopf algebra $\iUresSL2$ becomes a quasi-triangular quasi-Hopf algebra when equipped with the coassociator $\as$, the R-matrix $R$, the evaluation element $\Salpha$ and the coevaluation element $\Sbeta$.
\end{Thm}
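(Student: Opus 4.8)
The plan is to verify directly the defining identities of a quasi-triangular quasi-Hopf algebra for the data $(\as, R, \Salpha, \Sbeta)$, following the conventions of Appendix~\bref{app:qausi-Hopf}. Concretely one must check: (i) quasi-coassociativity $(\id\tensor\Delta)\Delta(a) = \as\,(\Delta\tensor\id)\Delta(a)\,\as^{-1}$, the pentagon identity for $\as$, and the counit normalisation $(\id\tensor\epsilon\tensor\id)(\as)=\one\tensor\one$; (ii) the four antipode axioms relating $S$, $\Salpha$, $\Sbeta$ and $\as^{\pm1}$; and (iii) the quasi-triangularity axioms, namely $\Delta^{\mathrm{op}}(a)\,R = R\,\Delta(a)$ together with the two compatibility identities expressing $(\Delta\tensor\id)(R)$ and $(\id\tensor\Delta)(R)$ through $R_{12},R_{13},R_{23}$ conjugated by the appropriate components of $\as^{\pm1}$. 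The organising principle throughout is the decomposition induced by the central idempotents: since $\K^2$ is central (as $\K\E=-\E\K$ and $\K\F=-\F\K$ at $\q=\rmi$) and $\K^4=\one$, we have $\iUresSL2 = \idem_0(\iUresSL2)\oplus\idem_1(\iUresSL2)$, and every identity above splits into components indexed by tuples in $\{0,1\}^n$ according to the placement of $\idem_0,\idem_1$ in each tensor slot. This matches exactly the component-wise presentation of $\as$ in~\eqref{eq:as-intro} and of $R$ in~\eqref{RQ}.

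Several sectors and axioms reduce to known facts. In the all-zero sector $\K^2=\one$, the coassociator restricts to $\idem_0\tensor\idem_0\tensor\idem_0$ and $R$ restricts to $R^{00}=\Rst$, so there all identities follow from the classical quasi-triangular \emph{Hopf} structure on the quotient $\iUresSL2/(\K^2-\one)$ recalled in~\eqref{eq:R00-EF} (cf.~\cite{Lus90,RT91,KM91,Kassel}). For the antipode, the choice $\Salpha=\one$ turns the axiom $\sum S(a_{(1)})\Salpha a_{(2)} = \epsilon(a)\Salpha$ into the ordinary Hopf identity $\sum S(a_{(1)})a_{(2)}=\epsilon(a)\one$, which holds since $S$ is unchanged; moreover $\Sbeta = \idem_0 - 2\rmi\beta^2\cas\idem_1$ is \emph{central}, because $\idem_0,\idem_1$ and the Casimir $\cas$ are central, so $\sum a_{(1)}\Sbeta S(a_{(2)}) = \Sbeta\sum a_{(1)}S(a_{(2)}) = \epsilon(a)\Sbeta$ likewise reduces to a standard Hopf identity. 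Finally, quasi-coassociativity and the intertwining $\Delta^{\mathrm{op}}(a)R=R\Delta(a)$ are equivalent, after conjugating by the invertible elements $\as$ and $R$, to equalities of algebra homomorphisms $\iUresSL2\to(\iUresSL2)^{\tensor 2,3}$, hence it suffices to verify them on the generators $\E,\F,\K$; and the counit normalisation is a finite check using $\epsilon(\idem_0)=1$, $\epsilon(\idem_1)=0$ and $\epsilon(\E)=\epsilon(\F)=0$.

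What remains are the genuinely computational axioms: the pentagon identity for $\as$ in $(\iUresSL2)^{\tensor 4}$, the two $R$-compatibility identities in $(\iUresSL2)^{\tensor 3}$, and the two remaining antipode axioms $\sum X^1\Sbeta S(X^2)\Salpha X^3 = \one$ and $\sum S(\bar X^1)\Salpha\bar X^2\Sbeta S(\bar X^3)=\one$, for $\as=\sum X^1\tensor X^2\tensor X^3$ and $\as^{-1}=\sum\bar X^1\tensor\bar X^2\tensor\bar X^3$. I would carry these out block by block in the PBW basis, using $\E^2=\F^2=0$ so that each block $\as^{ijk}$ is unipotent with an explicit finite inverse, and $\K^4=\one$ to reduce the Cartan part; each component then involves only finitely many monomials. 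The pentagon is the main obstacle: it couples the blocks $\as^{010},\as^{101},\as^{111}$ through the coproducts $(\id\tensor\id\tensor\Delta)\as$ and $(\Delta\tensor\id\tensor\id)\as$, producing lengthy products in a $16^4$-dimensional space, and it is here that the precise coefficients in $\as^{111}$ are pinned down, while the constraint $\beta^4=-1$ emerges from the joint consistency of the pentagon and the $R$-compatibility axioms. I expect to settle these with computer-algebra assistance, exploiting the idempotent grading to treat each $\{0,1\}$-component separately.

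As a conceptual cross-check, one can instead obtain the axioms for free: the symplectic-fermion category $\catSF$ is braided monoidal, so transporting its associativity and braiding isomorphisms along a fibre functor to the underlying category of $\rep\iUresSL2$ produces a coassociator and an R-matrix that automatically satisfy all the identities above. Along this route Theorem~\bref{thm:2} becomes a consequence of the braided monoidal equivalence $\rep(\iUresSL2,\as,R)\simeq\catSF$ established later in the paper; however, identifying the transported data with the explicit closed forms~\eqref{eq:as-intro} and~\eqref{RQ} still requires the same block-wise computation, so the two approaches meet at the same calculational core.
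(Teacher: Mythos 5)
Your strategy is viable and would indeed prove the theorem, but it is genuinely different from the paper's route. You propose a head-on verification of all quasi-Hopf and quasi-triangularity axioms, block by block in the idempotent grading, with computer algebra for the pentagon, the hexagons and the $\as$-dependent antipode conditions. The paper never verifies coherence this way as its primary argument: it obtains the pentagon \eqref{eq:3-cocycle} and the hexagons \eqref{eq:R-mat-hex12} \emph{for free} by transporting the associator and braiding of the braided monoidal category $\catSF$ along the multiplicative equivalences $\funD:\catSF\to\repS$ and $\funSQ:\repS\to\repQ$ (Propositions \bref{prop:catSF-repS-monoid} and \bref{prop:repS-repQ-monoid}, Section \bref{sec:trans-braid}); computer algebra enters to solve the transport conditions, whose solutions are \emph{unique}, for the explicit elements \eqref{eq:Phi-via-f+-} and \eqref{RQ}, and the direct axiom checks are performed only as redundant confirmation. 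Likewise the paper does not check the two $\as$-dependent antipode axioms by brute force: it invokes the general Proposition~\bref{prop:antipode}, which shows that for a Hopf algebra with a counital $3$-cocycle $\as$ satisfying \eqref{eq:as-intertwiner} the central element $\Sgamma=\sum_{(\as)}S(\as')\as''S(\as''')$ yields $\Salpha=\one$, $\Sbeta=\Sgamma^{-1}$ satisfying \eqref{eq:Salpha-1} and \eqref{eq:Salpha-2} automatically (here $\Sgamma^{-1}=\Sbeta$ since $\Sbeta^2=\one$). What your route buys is logical independence: Theorem \bref{thm:2} becomes a self-contained finite computation, not downstream of the $\catSF$ and $\Salg$ machinery (in the paper it is established only at the end of Section \bref{sec:proof}, essentially simultaneously with Theorem \bref{thm:3}). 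What the paper's route buys is provenance and uniqueness of the formulas and automatic coherence --- exactly the trade-off you acknowledge in your final paragraph, where your ``cross-check'' is in fact the paper's actual proof.

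Two corrections before the computation is run. First, you state the axioms in Drinfeld's orientation, whereas the paper follows the Chari--Pressley convention of Appendix \bref{app:qausi-Hopf}, in which the coassociator is the inverse of Drinfeld's: the paper's axiom \eqref{eq:as-intertwiner} reads $(\Delta\tensor\id)\Delta(a)=\as\,\bigl((\id\tensor\Delta)\Delta(a)\bigr)\,\as^{-1}$, and in \eqref{eq:Salpha-2} the pattern $\sum S(\as')\Salpha\,\as''\,\Sbeta\,S(\as''')=\one$ goes with $\as$ while the pattern with $\Sbeta$ in the middle goes with $\as^{-1}$ --- you have swapped both pairings. Your statements are internally consistent in Drinfeld's convention, but taken literally with the explicit element \eqref{eq:as-intro} they would fail and your check would wrongly reject the data; either switch to \eqref{eq:as-intertwiner}, \eqref{eq:R-mat-hex12}, \eqref{eq:Salpha-2} or replace $\as$ by $\as^{-1}$ throughout. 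Second, the blocks $\as^{101}$ and $\as^{111}$ are not unipotent: as the factorised form \eqref{eq:Phi-via-f+-} shows, each is a product of unipotent factors with the involution $\one\tensor\K\tensor\one$ (times a root of unity), still explicitly invertible, so your inversion strategy survives with this adjustment. Minor point: $(\epsilon\tensor\id)(R)=\one=(\id\tensor\epsilon)(R)$ need not be checked separately, as it follows from the hexagons and counitality of $\as$ (Proposition \bref{prop:eps-R}).
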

	Here, $\as$ modifies the associator in the category
	$\rep\iUresSL2$,
$R$ gives the braiding, and $\Salpha$ and $\Sbeta$ enter the
	definition of evaluation and coevaluation maps 
	(see	Appendix~\bref{app:qausi-Hopf} and	
\cite[Sec. 16.1]{ChPr}).

\begin{Cor}
The coassociator $\as$ defines a non-trivial cohomology class in the 3rd Hopf algebra cohomology. In particular, our quasi-Hopf algebra $(\iUresSL2, \as)$ cannot be obtained as a Drinfeld twist of the Hopf algebra $\iUresSL2$.
\end{Cor}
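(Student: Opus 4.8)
The plan is to reduce both assertions to the single statement that $\as$ is not a $3$-coboundary, and then to detect this non-triviality on one small module. Set $A:=\iUresSL2$. By Theorem~\bref{thm:2}, $\as$ satisfies the pentagon identity, hence is a $3$-cocycle in the multiplicative Hopf-algebra complex whose $2$-cochains are the normalised invertible elements $F\in A\tensor A$ (those with $(\epsilon\tensor\id)F=(\id\tensor\epsilon)F=\one$) and whose coboundary is
\[
\partial F=(\one\tensor F)\,\big((\id\tensor\Delta)F\big)\,\big((\Delta\tensor\id)F\big)^{-1}(F\tensor\one)^{-1}\ .
\]
Twisting the Hopf algebra $A$ (coassociator $\one\tensor\one\tensor\one$) by any such $F$ produces a quasi-Hopf algebra whose coassociator is exactly $\partial F$; so if $(A,\as)$ were a Drinfeld twist of $A$, then $\as=\partial F$ would be a coboundary and $[\as]=0$. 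Both claims therefore follow once $[\as]\neq0$ is established. (The second claim also admits a direct categorical proof: a twist induces a monoidal equivalence $\rep(A,\as)\simeq\rep A$, and since $\rep(A,\as)$ is braided by Theorem~\bref{thm:2} while $\rep A$ is not braidable by Theorem~\bref{thm:1}, no such twist can exist.)

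To localise the obstruction I would use the central idempotents $\idem_0,\idem_1$. Since $\Delta(\K)=\K\tensor\K$ gives $\Delta(\idem_0)=\idem_0\tensor\idem_0+\idem_1\tensor\idem_1$ and $\Delta(\idem_1)=\idem_0\tensor\idem_1+\idem_1\tensor\idem_0$, the coproduct is graded for the $\oZ/2$-grading defined by $\K^2=\pm1$, so $\as$ and every $\partial F$ decompose into components indexed by $(a,b,c)\in\{0,1\}^3$, built from the components $F^{ab}$, $(a,b)\in\{0,1\}^2$, of $F$. By~\eqref{eq:as-intro} the only components of $\as$ differing from the bare projector are $\as^{010}$, $\as^{101}$ and $\as^{111}$, the entire $\beta$-dependence sitting in $\as^{111}$ through the prefactor $\rmi\beta^2$ (with $(\beta^2)^2=-1$). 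As it suffices to exhibit a single obstruction, I would search for it first in the $111$ block.

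For this, let $X$ be a two-dimensional simple $A$-module with $\K^2=-1$, so that $\idem_0$ acts as $0$ and $\idem_1$ as the identity on $X$. On $X\tensor X\tensor X$ every factor is odd, hence only the $\idem_1\tensor\idem_1\tensor\idem_1$ block survives and the associativity constraint is given purely by $\as^{111}|_{X^{\otimes3}}$, an explicit automorphism of the $8$-dimensional space $X^{\otimes3}$. If $\as=\partial F$, then restriction to $X^{\otimes3}$ forces $\as^{111}|_{X^{\otimes3}}=(\partial F)|_{X^{\otimes3}}$; that is, the associator $\alpha_{X,X,X}$ would be gauge-trivialisable by automorphisms of $X\tensor X$ and of $X$. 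The main obstacle is precisely to show that this is impossible: one computes $\as^{111}|_{X^{\otimes3}}$ and extracts a root-of-unity invariant (the relevant $F$-symbol, a phase involving $\rmi$ and $\beta$) that no such gauge transformation can remove. This is a finite but genuinely non-trivial computation; should the $111$ block alone prove insufficient, one brings in mixed tensor products to expose $\as^{010}$ and $\as^{101}$ as well. (Alternatively, once the braided equivalence $\rep(A,\as,R)\simeq\catSF$ is in hand, $[\as]=0$ would make $\rep(A,\as)$ tensor equivalent to the representations of an honest Hopf algebra; since $\catSF$ is not of this form, one again concludes $[\as]\neq0$.)
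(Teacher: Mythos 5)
Your parenthetical remark is, in fact, the paper's entire proof: the paper observes that triviality of the class of $\as$ is equivalent to the existence of a monoidal structure on the identity functor from $\rep(\iUresSL2,\as)$ to $\rep\iUresSL2$, and then notes that by Theorems \bref{thm:1} and \bref{thm:2} there cannot be \emph{any} monoidal equivalence between these two categories, let alone one carried by the identity functor, since a monoidal equivalence would transport the braiding supplied by $R$ to a braiding on $\rep\iUresSL2$, contradicting Theorem \bref{thm:1}. This one observation subsumes every twist/gauge statement at once and makes any local computation unnecessary. So you do have the correct proof, but you demoted it to an aside, while your advertised main route is an unexecuted plan: you never compute $\as^{111}|_{X^{\otimes3}}$, never exhibit the claimed gauge-invariant phase (whose invariance under the two-parameter automorphism group of $X\tensor X\cong\PP^\pm_1$ is itself non-obvious), and you concede the $111$ block might not suffice --- correctly, because non-trivialisability at a single triple would prove the claim, but trivialisability there proves nothing, so the outcome of your ``finite but genuinely non-trivial computation'' is not known in advance. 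As the primary argument, this is a genuine gap.

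There is also a definitional flaw in your cohomological setup that matters for the first assertion. With arbitrary counital invertible $F\in\Q\tensor\Q$ your ``complex'' is not a complex in the non-cocommutative setting: twisting by a non-invariant $F$ changes the coproduct to $F\Delta(\cdot)F^{-1}$, and $\partial F$ then satisfies the pentagon \eqref{eq:3-cocycle} and the compatibility \eqref{eq:as-intertwiner} only with respect to the \emph{twisted} coproduct, not $\Delta$ --- this is why the paper refers to ``what is left thereof in the non-abelian case''. The meaningful notion takes as $2$-cochains those $F$ commuting with $\Delta(\Q)$, i.e.\ natural automorphisms of the identity functor; then coboundary $\Leftrightarrow$ Drinfeld twist with $\Delta_F=\Delta$ $\Leftrightarrow$ monoidal structure on the identity functor, and your parenthetical argument proves \emph{both} assertions of the corollary simultaneously, exactly as in the paper. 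With your broader notion, the cohomological claim is strictly stronger than the twist claim and no longer follows from the braidability argument (a hypothetical non-invariant $F$ with $\partial F=\as$ yields no monoidal functor), so your reduction ``both claims follow once $[\as]\neq0$'' leaves claim one resting on the unfinished computation. Finally, your closing alternative assumes that $\catSF$ is not the representation category of any Hopf algebra; the paper neither proves nor needs this --- the contradiction is with the non-braidability of $\rep\iUresSL2$ itself.
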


	For details on Hopf algebra cohomology, or what is left thereof in the non-abelian case, we refer to~\cite[Sec.~2]{Majid-book}
 -- we will not make further use of it in this paper.
The above corollary is equivalent to the statement that the identity functor on $\rep\iUresSL2$ cannot be endowed with a monoidal structure such that it becomes a monoidal functor from  $\rep(\iUresSL2, \as)$ -- where the associator is given by acting with $\as$ -- to $\rep\iUresSL2$. This is an immediate consequence of Theorems \bref{thm:1} and \bref{thm:2}, as $\rep(\iUresSL2, \as)$ is braidable while $\rep\iUresSL2$ is not.  Hence, there cannot be a monoidal equivalence at all, let alone a monoidal structure on the identity functor.

\vspace{1mm}

Next we comment on the relation to the vertex operator algebra $\Walg_2$.
At $p=2$, the vertex operator algebra $\Walg_p$ has a symplectic fermion
construction~\cite{Kausch:1995py,Abe:2005}
-- a chiral
rational logarithmic conformal field theory at central charge $c=-2$. In~\cite{Runkel:2012cf}, a braided tensor category
$\catSF$
of  the symplectic fermion fields was obtained from monodromy properties of 
conformal blocks (we review the category $\catSF$ in Section~\bref{sec:cat-C}). 
	In fact, due to a $\mathbb{Z}_2$-grading of the tensor product one naturally obtains four different braided monoidal categories $\catSF$ which we parametrise by the parameter $\beta$ from \eqref{eq:beta-param} already used for $\as$ and $R$.
The category obtained from symplectic fermion conformal blocks corresponds to $\beta=\mathrm{e}^{-\frac{\rmi\pi}{4}}$ in this convention. 
 Conjecturally, for this value of $\beta$, $\catSF$ is equivalent as a braided monoidal category to $\rep\Walg_2$,
 but the latter has not yet been computed explicitly.

\begin{Thm}\label{thm:3}
The categories  $\rep(\iUresSL2, \as, R)$ and $\catSF$ are equivalent as $\oC$-linear braided monoidal categories for each $\beta$ satisfying $\beta^4=-1$.
\end{Thm}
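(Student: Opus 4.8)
The plan is to construct a $\oC$-linear functor $\fun\colon \rep(\iUresSL2, \as, R) \to \catSF$ and to promote it to a braided monoidal equivalence in four stages: first the underlying abelian equivalence, then the tensor product and unit, then the associativity constraint (where $\as$ enters), and finally the braiding (where $R$ enters). A key simplification is that both categories are generated, under finite direct sums, subquotients and tensor products, by a single object: on the $\catSF$ side this is the symplectic-fermion doublet, and on the quantum-group side the fundamental two-dimensional representation of $\iUresSL2$. By naturality of the tensorator and the coherence properties of monoidal functors, it then suffices to fix the comparison data on this generator and to verify the monoidal- and braided-functor compatibilities on its tensor powers, which reduces an a priori infinite family of coherence conditions to a finite computation.

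First I would establish the equivalence of underlying $\oC$-linear abelian categories. Both $\catSF$, as described in Section~\bref{sec:cat-C}, and $\rep\iUresSL2$ are finite $\oC$-linear abelian categories, and one checks that they carry the same number of isomorphism classes of simple objects, the same projective covers, and isomorphic $\mathrm{Ext}$-quivers. Matching simples to simples and projectives to projectives then yields a $\oC$-linear equivalence $\fun_0$. Since passing from the Hopf structure to the quasi-Hopf structure ($\as$, $R$, $\Salpha$, $\Sbeta$) does not alter the underlying algebra $\iUresSL2$, the abelian category $\rep(\iUresSL2, \as, R)$ coincides with $\rep\iUresSL2$, so $\fun_0$ serves directly as the candidate on objects and morphisms.

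Next I would upgrade $\fun_0$ to a braided monoidal functor. At the level of the Grothendieck ring the fusion rules of $\catSF$ and of $\rep\iUresSL2$ agree, so $\fun_0(X \tensor Y) \cong \fun_0(X) \tensor \fun_0(Y)$ for all objects, and fixing the tensorator on the generating doublet determines it everywhere by naturality. The core of the proof is then to verify that the associativity constraints correspond: in $\rep(\iUresSL2, \as, R)$ the associator on a triple tensor product is the action of $\as$, while in $\catSF$ it is the explicitly known monodromy associator of Section~\bref{sec:cat-C}. Evaluating the PBW-basis expression for $\as$ from \eqref{eq:as-intro} on the triple tensor power of the generator, and comparing it component by component in the idempotent decomposition $\idem_0, \idem_1$ with the associator of $\catSF$, is the central computation; the analogous check for the R-matrix $R$ from \eqref{RQ} on the tensor square of the generator matches the braidings. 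Here Theorem~\bref{thm:2} does essential work: because $(\iUresSL2, \as, R)$ is already known to be a quasi-triangular quasi-Hopf algebra, the pentagon and hexagon axioms hold on the source, so only the functor compatibilities, and not the categorical coherences themselves, remain to be confirmed.

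The main obstacle will be exactly this matching of the associator and braiding, because the two presentations look very different. The elements $\as$ and $R$ are given in a PBW basis of tensor powers of $\iUresSL2$, whereas the associator and braiding of $\catSF$ arise from the monodromy and factorisation of conformal blocks and are naturally expressed in bases adapted to the fermionic degrees of freedom and their $\oZ_2$-grading. Bridging the two requires choosing explicit bases in the small representation spaces of the generator, diagonalising into the $\idem_0$ and $\idem_1$ sectors, and checking that the eight components of $\as$ and the four components of $R$ reproduce the symplectic-fermion monodromy data, including the correct dependence on the parameter $\beta$ with $\beta^4 = -1$. This $\beta$ labels, on the one hand, the four inequivalent braided structures on $\catSF$ coming from the $\oZ_2$-graded tensor product and, on the other hand, the choices entering $\as^{111}$, $\Sbeta$ and $R^{11}$; the final step is to confirm that these two families of choices are identified under $\fun$, so that the equivalence holds simultaneously for each admissible $\beta$.
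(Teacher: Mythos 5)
Your overall shape (one functor, then match tensor, associator, braiding) is reasonable, but two of your key reduction steps are genuinely broken. First, your inference ``the fusion rules agree in the Grothendieck ring, so $\fun_0(X\tensor Y)\cong \fun_0(X)\tensor\fun_0(Y)$'' is false in a non-semisimple setting: both categories here contain non-simple indecomposable projectives ($\PP^{\pm}_1$ on the quantum-group side), and equality of classes in the Grothendieck ring does not determine objects up to isomorphism, so this step proves nothing; likewise, matching simples, projective covers and Ext-quivers does not by itself yield an abelian equivalence (the relations of the basic algebra matter, not just the quiver). The paper avoids both issues by constructing explicit inverse pairs of functors, $\funD:\catSF\to\repS$ and $\funSQ:\repS\to\repQ$ (the latter is even an isomorphism of categories). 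Second, and more centrally, your claim that fixing the tensorator on a single tensor generator determines it everywhere by naturality, and that coherence then reduces to a finite check on tensor powers, is exactly the hard part and is not justified: an arbitrary choice on the generator need not extend compatibly, and extending along subquotients of tensor powers already presupposes the associator compatibility you are trying to verify. In the paper this is where all the work sits: the tensorators $\isoD_{U,V}$ and $\isoG_{U,V}$ are constructed explicitly (the former found by a computer search through a parametrised ansatz, with invertibility and the intertwiner property proved sector by sector), and $\as$ and $R$ are not independently given data to be ``matched'' but are \emph{defined} as the transported associator and braiding, so that monoidality and braidedness of the composite equivalence hold by construction and Theorem~\bref{thm:2} is proved simultaneously rather than used as an input.

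You also underestimate a structural obstacle that the paper flags explicitly: $\catSF$ lives over $\svect$ while $\repQ$ lives over $\vect$, so a direct comparison forces you to track parity signs and the mismatch between the super flip $\sflip$ and the ordinary flip $\tau$ throughout every computation (cf.\ the relation \eqref{sflip-om} and the involution \eqref{Z2-repQ}). The paper interposes the quasi-Hopf algebra $\Salg$ in $\svect$ precisely so that the transport $\catSF\to\repS$ happens entirely in $\svect$ and only the final step $\repS\to\repQ$ crosses into $\vect$, with the sign bookkeeping isolated in $\funSQ$ and the element $\z=\idem_0+\rmi\idem_1$. Your plan, by contrast, would have to absorb all of this into the single component-by-component comparison of $\as$ and $R$ against the monodromy data of $\catSF$, with no mechanism offered for handling the parity signs; as written, the proposal does not close this gap.
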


Theorem~\bref{thm:3} is our main result. 
 Under the conjectural braided monoidal equivalence between $\catSF$ and  $\rep\Walg_2$, Theorem~\bref{thm:3} is an extension 
 of
 the equivalence of the
 representation categories of $\iUresSL2$ and of $\Walg_2$ as
 $\oC$-linear categories
-- established in \cite{Feigin:2005xs,Nagatomo:2009xp} --
to an equivalence of braided monoidal categories.
We also note that Theorem~\bref{thm:3}  is the first example of a braided tensor equivalence between a braided tensor category obtained in a logarithmic conformal field theory and the representation category of a quantum group.

For factorisable Hopf algebras one can obtain an $SL(2,\oZ)$-action on the centre of the Hopf algebra~\cite{[LM],[Lyu]}. 
We observe in Appendix~\bref{app:SL2Z} that -- using the same expressions (despite working with a quasi-Hopf algebra) -- our R-matrix defines an $SL(2,\oZ)$-action on the centre of $\iUresSL2$, equivalent to the one given in \cite{[FGST]}, which agrees with $SL(2,\oZ)$-action via modular transformations on the space of torus conformal blocks of the $\mathcal{W}_2$-triplet algebra. 
\medskip

Let us give a brief outline of the proof of Theorem~\bref{thm:3}. The proof proceeds in three steps. First, we define an auxiliary algebra $\Salg$ in $\svect$ which has half the dimension of $\iUresSL2$. The algebra $\Salg$ is equipped with a non-coassociative coproduct, resulting in a tensor product functor on $\rep\Salg$ without -- so far -- an associator. We then establish the existence of multiplicative equivalences
\begin{equation}
	\catSF
	 ~\xrightarrow{~~~\funD~~~}~ 
	\rep\Salg
	 ~\xrightarrow{~~~\funSQ~~~}~ 
	\rep\iUresSL2 \ .
\end{equation}
Recall that
a functor $F$ is called \textit{multiplicative}\footnote{
	The definition is taken from \cite{Majid:1992}
	(see also \cite[Sect.\,9.4.1]{Majid-book}).
If the target of the functor is the category of vector spaces, the name {\em quasi-fibre functor} is more common.
}
if it is equipped with isomorphisms $\phi_{U,V} : F(U\tensor V) \to F(U)\tensor F(V)$, natural in $U,V$, and an isomorphism $\phi_{\one}: F(\one) \to \one$, on which no coherence conditions are imposed. A multiplicative functor is {\em monoidal} if $\phi_{U,V}$ and $\phi_{\one}$ satisfy  the hexagon and unit conditions.
On general grounds, it follows that there exists a
quasi-bialgebra structure on $\iUresSL2$, such that the composite functor $\funSQ \circ \funD$ is monoidal (see e.g.\ \cite[Sect.\,9.4]{Majid-book}). 

In step 2, we compute $\as$ by first finding a coassociator $\Sas$ on $\Salg$, turning it into a quasi-Hopf algebra in $\svect$, such that $\funD$ is monoidal. Then we transport $\Sas$ along $\funSQ$ to $\rep\iUresSL2$. 
The reason to pass via the intermediate category $\rep\Salg$ was that both $\rep\Salg$ and $\catSF$ are defined over the underlying category $\svect$, while $\rep\iUresSL2$ is defined over $\vect$. Passing via $\rep\Salg$ avoids excessive (and confusing) mixing of the tensor flips in $\svect$ and $\vect$.

Step 3 consists of transporting the braiding from $\catSF$ to $\rep\iUresSL2$ and reading off the R-matrix which gives rise to this braiding. Since $\catSF$ is a ribbon category, we can also transport the ribbon twist to $\rep\iUresSL2$ and we compute the corresponding ribbon element.

In presenting these steps in the body of the paper, we have opted for collecting the quasi-Hopf structure of $\iUresSL2$ and of $\Salg$ in one section each, rather than postponing the definition of the coassociator to a later section (which would be the chronological account). We believe this improves the readability of the paper.

\medskip

Starting from Theorem \bref{thm:3}, there are a number of further directions which are worth pursuing. 

Firstly, it should be relatively straightforward to generalise the construction of this paper to several pairs of symplectic fermions by taking appropriate products of the categories and algebras involved, and we expect to make contact with the results in \cite{Lentner:2014} on quantum groups at roots of unity of small order. 

Secondly, and more difficult, from the relation to $\rep\Walg_p$ one may expect the existence of a modified coproduct on $\UresSL2$ for $\q = e^{\rmi\pi/p}$ (for the same algebra structure) such that the tensor product becomes commutative and such that the structure of a quasi-triangular quasi-Hopf algebra exists. Again, a generalisation of $\Salg$ may serve as a helpful intermediate step. 
	Our construction
suggests that such an $\Salg$ should live in $\oZ_p$-graded vector spaces with an appropriate braiding. $\UresSL2$ is of dimension $2p^3$ and we would expect $\Salg$ to be of dimension $2p^2$. Of course, it would then be highly desirable to establish a braided monoidal equivalence with $\rep\Walg_p$, but associator and braiding in the latter category are not yet sufficiently explicitly understood.

Thirdly, and less directly linked to the present paper, there are several constructions related to conformal field theory which have been formulated for Hopf-algebras, and where one could take the example from Theorem \bref{thm:3} as a starting point to look for a generalisation to quasi-Hopf algebras. Of particular interest to us are the construction of \cite{Fuchs:2013lda}, which  provides mapping class group invariants that can serve as bulk correlation functions in logarithmic conformal field theory, and \cite{Blanchet:2014}, where a three-dimensional topological field theory is constructed starting from quantum $sl(2)$ at a root of unity (but different from the $\UresSL2$ used here).

\vspace{1mm}

The rest of the paper is organised as follows. In Section \bref{sec:QG} we look at $\iUresSL2$ in more detail and prove Theorem~\bref{thm:1}.
	In Section~\bref{sec:cat-C}, we review the braided category $\catSF$ of the symplectic fermions CFT.
	In Section \bref{sec:S}, we introduce  the quasi-Hopf algebra $\Salg$ in $\svect$. Sections \bref{sec:RepS-RepQ} and~\bref{sec:SF-RepS} detail the multiplicative functors $\funSQ$ and $\funD$.
	In Section~\bref{sec:proof}, we prove Theorems \bref{thm:2} and~\bref{thm:3}.

The proofs of 
Theorems~\bref{thm:1}, \bref{thm:2} and \bref{thm:3} 
rely heavily on computer algebra, specifically on Mathematica
	implementations of the quasi-Hopf algebras. We indicate below which steps of our proofs
	were done by computer algebra.

\bigskip

\noindent {\bf Acknowledgements}: We thank
Alexei Davydov,
	J\"urgen Fuchs,
Simon Lentner,
Alexei Semikhatov,
Volker Schomerus,
	Christoph Schweigert,
and Yorck Sommerh\"auser
	for helpful discussions
	and comments on a draft version of this paper.
A.M.G.\ was supported by Humboldt fellowship and RFBR-grant 13-01-00386.
A.M.G.\ wishes also to thank the IPhT in Saclay and Max-Planck Institute in Bonn
	for hospitality during the work on this project.
The authors are also grateful to the
	organisers 
of the program 
``Modern trends in topological quantum field theory'' at the Schr\"odinger Institute in Vienna in February and March 2014, where part of this work was undertaken.

\medskip

\noindent {\bf Notations}:
We use `$\cdot$' for the multiplication, e.g., $a\cdot b$ and the `$.$' is for the action, i.e., $a.u$ means that $a$ acts on $u$. We also write $a. u \tensor v$ for $(a.u) \tensor v$, and similarly for the right component,  in contrast to the action of $a$ on $u\tensor v$ denoted by $a.(u \tensor v)$.
Finally, to help the reader to navigate through this long paper, we provide a partial list of notations:
\begin{itemize}
\item $\catSF$ --- the braided tensor category obtained from the field theory of symplectic fermions, 
see \eqref{equiv-SF} and \eqref{eq:catSF-dec},
\item $\repQ$ --- the 
category of finite-dimensional representations of $\Q= \iUresSL2$, see Section~\bref{sec:repQ},
\item $\repS$ ---  the category of finite-dimensional super-vector space representations of $\Salg$ defined in Section~\bref{dfn:algS},
\item $\as$ --- the coassociator for $\Q$ given in \eqref{eq:as-intro} and \eqref{eq:Phi-via-f+-},
\item $\Sas$ ---  the coassociator for $\Salg$ in super-vector spaces, see \eqref{eq:Sas},
\item $\funSQ$ ---     the monoidal functor  from $\repS$ to $\repQ$ defined in Section~\bref{sec:funSQ-def},
\item $\isoG_{U,V}$ --- the family of isomorphisms~\eqref{isoG} of the functor $\funSQ$,
\item $\funCS$ ---     the monoidal functor  from $\catSF$ to $\repS$ defined in Section~\bref{sec:funD},
\item $\isoD_{U,V}$ --- the family of isomorphisms~\eqref{eq:isoD00}-\eqref{eq:isoD11} of the functor $\funCS$.
\item $\B$ --- a projective $\Salg$-module defined in~\eqref{eq:B}.
\end{itemize}

\section{Details on the restricted quantum group at $\q=\rmi$}\label{sec:QG}

\subsection{The restricted quantum group}
We will only consider the case $\q=\rmi$ (that is, $p=2$) for the restricted quantum group $\UresSL2$. We abbreviate
\begin{equation}
	\Q = \iUresSL2 \ .
\end{equation}
The defining relations specialise to
\begin{gather}\label{Ui-relations}
  \K\E\K^{-1}=-\E\ ,\quad
  \K\F\K^{-1}=-\F\ ,\quad
  [\E,\F]=\ffrac{\K}{\rmi}\ffrac{\one-\K^2}{2}\ ,\\ \nonumber
    \E^{2}=\F^{2}=0\ ,\quad \K^{4}=\one\ ,
\end{gather}
and the Hopf algebra structure is the same as described in the Introduction, i.e., as for generic~$\q$.
The algebra has dimension  $\mathrm{dim}\,\Q = 16$.

It turns out that the algebra $\Q$ is isomorphic to a direct sum of a Gra{\ss}mann and a Clifford algebra multiplied by a cyclic group of order two.
Indeed, introduce  the elements
\begin{equation}\label{eq:ferm-EF}
\ff^+=-\rmi\F\ ,\qquad \ff^-=\E\K^{-1}\ .
\end{equation}
They satisfy fermionic-type relations
\begin{equation}
\{\ff^+,\ff^-\}=\idem_1\ ,
\end{equation}
where we introduce central idempotents
\begin{equation}\label{eq:idem}
\idem_0 = \half(\one+\K^2)\ ,\qquad
\idem_1 = \half(\one-\K^2)\ .
\end{equation}
These central idempotents $\idem_i$ correspond to a decomposition of $\Q$ into ideals $\Q_i$:
\begin{equation}\label{Q-decomp}
\Q = \idem_0\Q \oplus \idem_1\Q = \Q_0\oplus \Q_1\ .
\end{equation}
We then see that the ideal $\Q_0$ is generated by $\idem_0\ff^{\pm}$ satisfying the Gra{\ss}mann algebra relations and $\idem_0\K$ is of order two. This is a non-semisimple subalgebra in $\Q$. Conversely, the ideal $\Q_1$ is a semisimple subalgebra and is isomorphic to a direct sum of two Clifford algebras.
We note also the comultiplication formulas in the new notations
\begin{equation}\label{eq:Delta-fpm}
\Delta(\ff^{\pm}) = \ff^{\pm}\tensor\one + \K^{-1}\tensor\ff^{\pm}\ .
\end{equation}
The coproduct of $\idem_i$ is given by
\begin{equation}\label{eq:cop-idem}
\Delta(\idem_0) = \idem_0\tensor\idem_0 + \idem_1\tensor\idem_1\ ,\qquad
\Delta(\idem_1) = \idem_0\tensor\idem_1 + \idem_1\tensor\idem_0\ .
\end{equation}

\subsection{The category $\repQ$}\label{sec:repQ}
The $\oC$-linear category $\repQ$ of finite-dimensional representations of $\Q$ decomposes as
\begin{equation}
\repQ = \repQ_0 \oplus \repQ_1 \ ,
\end{equation}
following the algebra decomposition~\eqref{Q-decomp}. Using the coproduct formulas~\eqref{eq:cop-idem}, we see that the tensor product $\tensor$ functor respects the $\oZ_2$-grading in $\repQ$, i.e., $U\tensor V\in \repQ_0$ for $(U,V)\in\repQ_i\times\repQ_i$ and $U\tensor V\in \repQ_1$ for $(U,V)\in\repQ_i\times\repQ_{i+1}$, for $i\in\oZ_2$.

We begin by recalling results on simple $\Q$-modules and then describe briefly their projective covers~\cite{Feigin:2005xs}.
There are four simple modules $\XX^{\pm}_{s}$, with $s=1,2$, and they are highest-weight $\Q$-modules. The $\Q$ action is defined as follows:
the modules $\XX^{\pm}_{1}$ are one-dimensional of weights $\pm1$ with respect to $\K$ and with the zero action by $\E$ and $\F$;
the modules $\XX^{\pm}_{2}$ are two-dimensional of the highest weights $\pm\rmi$, i.e., there exists a basis $\{v^{\pm}_0,v^{\pm}_1\}$ in $\XX^{\pm}_{2}$ and the action
\begin{align*}
&\K.v^{\pm}_0 = \pm\rmi v^{\pm}_0\ , 
&&\E .v^{\pm}_0 = 0\ , 
&&\F.v^{\pm}_0 =  v^{\pm}_1\ ,
\\
&\K .v^{\pm}_1 = \mp\rmi v^{\pm}_1\ ,
&&\E. v^{\pm}_{1} = \pm v^{\pm}_0\ ,
&&\F .v^{\pm}_{1} = 0 \ .
\end{align*}
We note that the simple modules $\XX^{\pm}_{2}$ are projective and  the $\XX^{\pm}_{1}$ have indecomposable but reducible projective covers $\PP^{\pm}_{1}$.
	The composition series of $\PP^{\pm}_{1}$ contains two copies each of $\XX^{+}_{1}$ and $\XX^{-}_{1}$, more details can be found in~\cite{Feigin:2005xs}.
$\PP^{\pm}_{1}$ are projective objects in $\repQ_0$ while the  $\XX^{\pm}_{2}$ are in $\repQ_1$.

\newcommand{\om}{\omega}
\newcommand{\eps}{\varepsilon}
\subsection{$\Q$ is not quasi-triangular} It is known~\cite{KS} that the Hopf-algebra $\UresSL2$ at integer $p>2$ is not quasi-triangular because
there are examples of finite-dimensional $\UresSL2$-modules $U$, $V$ such that $U\tensor V \not\cong V \tensor U$.
However, in the special case of $p=2$ (i.e.\ $\q=\rmi$) the tensor product satisfies
$U\tensor V \cong V \tensor U$ for all finite-dimensional $\UresSL2$-modules.
Nonetheless, we show below that $\UresSL2$ is not quasi-triangular even at $p=2$.

Instead of Theorem~\bref{thm:1}, we  prove a more general result. 
The third group cohomology $H^3(\oZ_2,\oC^*)$ is isomorphic to $\oZ_2$. Thus,
up to coboundaries there are two normalised $3$-cocycles for the group $\oZ_2$ (written additively) with values in $\oC^*$ (written multiplicatively), namely $\phi_{+1}$ and $\phi_{-1}$ with
\begin{equation*}
\phi_{\eps}(1,1,1) = \eps, \qquad \text{with} \quad \eps = \pm1 \ ,
\end{equation*}
and $\phi_\eps(i,j,k)=1$ if any of $i,j,k$ is $0$.
Each such $3$-cocycle gives an example of a quasi-bialgebra structure on $\Q$:
\begin{equation}
		\Phi_{\eps} = \sum_{i,j,k\in\oZ_2}\phi_{\eps}(i,j,k) \, \idem_i\tensor\idem_j\tensor\idem_k = \one\tensor\one\tensor\one + (\eps-1)\,\idem_1\tensor\idem_1\tensor\idem_1\ .
\end{equation}

The following theorem implies Theorem \bref{thm:1}:
\begin{Thm}
	The quasi-bialgebra $(\Q,\Phi_{\eps})$ is not quasi-triangular.
\end{Thm}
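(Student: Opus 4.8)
The plan is to translate the statement into the representation category and obstruct the hexagon axioms. By the standard dictionary for quasi-bialgebras, a universal R-matrix for $(\Q,\Phi_{\eps})$ is the same datum as a braiding $c_{U,V}(u\tensor v)=\flip(R.(u\tensor v))$ on $\repQ$ that is compatible with the associativity constraint coming from $\Phi_{\eps}$; hence it suffices to show that $\repQ$, equipped with this associator, admits no braiding. Since $U\tensor V\cong V\tensor U$ holds for all $U,V$, the obstruction cannot lie in the existence of the isomorphisms, so it must sit entirely in the coherence (hexagon) conditions. A complementary, more computational route is to decompose $R=\sum_{i,j}R^{ij}$ along the idempotents, use quasi-cocommutativity $R\Delta(x)=\Delta^{\mathrm{op}}(x)R$ to cut the candidates down to a finite-dimensional family, and then impose the two (quadratic) quasi-triangularity axioms involving $\Phi_{\eps}$; this is the form amenable to computer algebra.

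First I would record the effect of $\Phi_{\eps}$ on modules. Because $\idem_1$ acts as zero on every grade-$0$ module and $\idem_0$ as zero on every grade-$1$ module, the element $\Phi_{\eps}=\one\tensor\one\tensor\one+(\eps-1)\idem_1\tensor\idem_1\tensor\idem_1$ acts as the identity on any triple tensor product having at least one grade-$0$ factor, and as the scalar $\eps$ on a triple of grade-$1$ modules. Thus the associativity constraint of $\repQ$ is the trivial one except on grade-$(1,1,1)$ triples, where it is multiplication by $\eps$. Feeding this into the hexagon identities yields the ordinary hexagons everywhere, except that for $U,V,W\in\repQ_1$ one obtains the $\eps$-twisted relations
\[
c_{U,V\tensor W}=\eps\,(\id_V\tensor c_{U,W})\,(c_{U,V}\tensor\id_W),\qquad
c_{U\tensor V,W}=\eps\,(c_{U,W}\tensor\id_V)\,(\id_U\tensor c_{V,W}).
\]

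Next I would localise the problem. By the excerpt, $\Q_1$ is semisimple with the two simple modules $\XX^+_2,\XX^-_2$, so these are the only grade-$1$ indecomposables and hence the only objects through which the factor $\eps$ can enter. Setting $U=V=W=\XX^+_2$ and using $\XX^+_2\tensor\XX^+_2\cong\PP^-_1$ turns the first twisted hexagon into an identity relating $c_{\XX^+_2,\PP^-_1}$ to $\eps$ times a product of copies of the single operator $c_{\XX^+_2,\XX^+_2}\in\Endo(\PP^-_1)$. Naturality of the braiding with respect to the maps linking $\PP^-_1=\XX^+_2\tensor\XX^+_2$ to its top and socle $\XX^-_1$, and with respect to the invertible object $g=\XX^-_1$ (for which $\XX^+_2\tensor g\cong\XX^-_2$), lets me express all remaining braidings in terms of $c_{\XX^+_2,\XX^+_2}$ and a few scalars. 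I would then write the twisted hexagons, together with these naturality constraints, as an explicit system of matrix equations for the invertible operator $c_{\XX^+_2,\XX^+_2}$ and show it has no solution for either value $\eps=\pm1$.

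The main obstacle is precisely the non-semisimplicity of the grade-$0$ block: the tensor square $\XX^+_2\tensor\XX^+_2$ is not simple but is the $4$-dimensional indecomposable $\PP^-_1$, so $c_{\XX^+_2,\XX^+_2}$ is a genuine automorphism of an indecomposable module, of the form (nonzero scalar)$\cdot\id$ plus a nilpotent part, rather than a mere scalar. A purely pointed/graded argument is therefore insufficient: the pointed braided categories over $\oZ_2$ with associator of either cohomology class do exist (the nontrivial class is realised by the semion), so the grading alone distinguishes neither case nor produces the obstruction. The contradiction can only come from how the scalar $\eps$ must interact with the nilpotent part inside $\PP^-_1$, and this is exactly the content that Theorem~\bref{thm:2} supplies from the other side: the coherent associator there, $\as^{111}$, is genuinely non-central (it carries $\E,\F$-terms), and a scalar cannot reproduce the correction it provides on grade-$(1,1,1)$ triples. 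Establishing that no invertible $c_{\XX^+_2,\XX^+_2}$ solves the $\eps$-twisted hexagon for either $\eps$ is the heart of the argument, and is where I would invoke explicit (computer-algebra) verification.
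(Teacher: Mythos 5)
Your overall strategy is legitimate and genuinely different from the paper's: the paper works directly with a candidate $R\in\Q\tensor\Q$, finds that quasi-cocommutativity and the counit conditions leave a $99$-parameter affine family, and then shows by Mathematica that the two $\Phi_{\varepsilon}$-twisted hexagons collapse this family to an equation $(x-a)(y-b)=0$ whose two branches each contradict the remaining relations. You instead propose a categorical localisation: restrict a hypothetical braiding to the simple projective $\XX^{+}_{2}$, use $\XX^{+}_{2}\tensor\XX^{+}_{2}\cong\PP^{-}_{1}$ and the invertible object $\XX^{-}_{1}$ to express everything through $c_{\XX^{+}_{2},\XX^{+}_{2}}\in\mathrm{Aut}(\PP^{-}_{1})$ (a two-parameter operator, scalar plus nilpotent), and derive a contradiction from the $\varepsilon$-twisted hexagons. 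The reduction direction is sound -- any braiding would restrict to a solution of your system, and your sector analysis of $\Phi_{\varepsilon}$ (trivial unless all three factors lie in grade $1$, where it is $\varepsilon$) and the resulting twisted hexagons are correct -- so, if your small system is inconsistent, this would be a more economical argument than the paper's.

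However, there is a genuine gap: the decisive step is never established. You defer the inconsistency of the localised matrix system to a computer check that is not performed, and, unlike the paper, you have no evidence that your restricted system is rich enough to be inconsistent at all. Restricting naturality to a handful of maps is strictly weaker than the full quasi-cocommutativity constraint $R\Delta(x)=\Delta^{\mathrm{op}}(x)R$ for all $x\in\Q$, and the paper's computation shows the obstruction is delicate -- it survives $99$ free parameters and many linear relations before collapsing -- so it is entirely possible that hexagons and naturality at $\XX^{+}_{2}$ alone admit solutions, in which case your argument proves nothing. Worse, the fallback justification you offer is circular: the observation that the coherent coassociator $\as$ of Theorem~\bref{thm:2} has a non-central $(1,1,1)$-component does not imply that no \emph{scalar} coassociator $\Phi_{\varepsilon}$ admits an R-matrix; the inequivalence of $(\Q,\Phi_{\varepsilon})$ and $(\Q,\as)$ as quasi-bialgebras is precisely what the paper's Corollary deduces \emph{from} Theorems~\bref{thm:1} and~\bref{thm:2} together, so it cannot be invoked to prove Theorem~\bref{thm:1}. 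Two smaller points: the inference that, because $U\tensor V\cong V\tensor U$ pointwise, ``the obstruction must sit entirely in the hexagons'' is unjustified as stated (pointwise isomorphy does not produce a natural family; that natural candidates exist is a nontrivial output of the paper's linear-algebra step), and the identification $\XX^{+}_{2}\tensor\XX^{+}_{2}\cong\PP^{-}_{1}$ is asserted without verification -- it is consistent with the weight of the highest-weight vector and with projectivity, but it needs to be checked before the matrix system can even be written down.
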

\begin{proof}
Using computer algebra (we used Mathematica) one shows that the conditions in Definition \bref{def:quasi-triang_for_quasi-hopf} have no solution. Since the conditions are non-linear, let us nonetheless give our procedure in some detail.

The R-matrix $R$ is an element of $\Q\tensor\Q$, a 256-dimensional vector space. The linear conditions
\begin{gather*}
R\Delta(\K) = \Delta^{\mathrm{op}}(\K)R ~~,\quad
R\Delta(\E) = \Delta^{\mathrm{op}}(\E)R ~~,\quad
R\Delta(\F) = \Delta^{\mathrm{op}}(\F)R \\
 (\epsilon \tensor \id)(R) = \one = (\id \tensor \epsilon)(R) \ .
\end{gather*}
have a 99-dimensional affine subspace of $\Q\tensor\Q$ as set of solutions.
We insert this candidate-R-matrix with 99 free parameters into the two hexagon conditions \eqref{eq:R-mat-hex12}. The hexagon conditions are quadratic and depend on $\Phi_{\eps}$. While the details of the calculation differ slightly between $\eps=+1$ and $\eps=-1$, the procedure is the same in both cases:

Looking at the resulting equations in $\Q^{\otimes 3}$ component by component reveals many {\em linear} conditions on the 99 parameters. After imposing all linear conditions and quadratic conditions of the form $\text{(parameter)}^2=0$ (iteratively, as more appear in the process), one finds a quadratic equation of the form $(x-a)(y-b)=0$, where $x,y$ are two of the remaining parameters and $a,b \in\oC$ are constants. One then checks that both possibilities, $x=a$ or $y=b$, lead to contradicting conditions in the remaining equations.
\end{proof}

If one complements	the quasi-bialgebra $(\Q,\Phi_{\eps})$
by the evaluation and coevaluation elements
(see Proposition~\bref{prop:antipode})
\begin{equation}
	\Salpha_{\eps} = \one
\,, \qquad \Sbeta_{\eps} = \idem_0 + \eps \idem_1 \,,
\end{equation}
one easily checks that one obtains a quasi-Hopf algebra.

\section{The braided tensor category $\catSF$
	from
symplectic fermions}\label{sec:cat-C}

We consider the chiral conformal field theory of one pair of symplectic fermions. Symplectic fermions first appeared in \cite{Kausch:1995py} and are by now the best investigated logarithmic conformal field theory, see e.g.\ \cite{Gaberdiel:1996np,Fuchs:2003yu,Abe:2005,Feigin:2005xs,
Abe:2011ab,Runkel:2012cf}.

In this section we summarise the construction of \cite{Davydov:2012xg,Runkel:2012cf} which uses the monodromy properties and asymptotic behaviour of symplectic fermions conformal blocks to produce a braided tensor category $\catSF$.

\subsection{Mode algebra and representations}

The mode algebra of a single pair of symplectic fermions comes in two versions: twisted and untwisted. Both are Lie superalgebras with odd generators $\chi^\pm_m$, a central even generator $K$ and anticommutation relations
\begin{equation}
	\{ \chi^+_m,\chi^-_n \} = m \, \delta_{m+n,0} \,K \ .
\end{equation}
For the untwisted mode algebra $\hat{\mathfrak{h}}$ we have $m,n \in \oZ$, while for the twisted mode algebra $\hat{\mathfrak{h}}_\text{tw}$, $m,n \in \oZ + \frac12$.

The representations of $\hat{\mathfrak{h}}$ we are interested in are in super-vector spaces, such that the action of $\chi^\pm_m$ is by odd maps, while $K$ acts as $1$. Furthermore, the representations
\begin{itemize}
\item[-] are bounded below in the sense that every vector is annihilated by each word in the generators of sufficiently positive total mode number,
\item[-] have finite-dimensional highest weight space (the subspace annihilated by all $\chi^\pm_m$ with $m>0$).
\end{itemize}
Denote this category by
$\rep_{\flat,1}^\text{fd}(\hat{\mathfrak{h}})$. The category $\rep_{\flat,1}^\text{fd}(\hat{\mathfrak{h}}_\text{tw})$ is constructed analogously.

Write $\chi^{\pm} := \chi^{\pm}_0$ for the zero modes in $\hat{\mathfrak{h}}$. They generate a four-dimensional Gra\ss mann algebra $\algGr$,
\begin{equation}
\{\chi^+,\chi^-\}=0 \ .
\end{equation}
We will work with the normalisation used in \cite{Davydov:2012xg} instead of that in \cite{Runkel:2012cf} as it involves fewer factors of $\pi \rmi$. That is, we use the generators
\begin{equation}
	a_1 := \chi^+
~~,\quad
	a_2 := \pi \rmi \, \chi^-
~~,\quad
	\{a_1,a_2\} = 0 \ .
\end{equation}
The Gra\ss mann algebra $\algGr$ becomes a Hopf algebra in $\svect$ by giving $a_{1}, a_2$ odd parity and choosing $\Delta(a_j) = a_j \otimes \one + \one \otimes a_j$, $S(a_j) = - a_j$, for $j=1,2$.

It is shown in \cite[Thms.\,2.4\,\&\,2.8]{Runkel:2012cf} that the functor associating to a representation in $\rep_{\flat,1}^\text{fd}(\hat{\mathfrak{h}})$ or $\rep_{\flat,1}^\text{fd}(\hat{\mathfrak{h}}_\text{tw})$ its highest weight space, provides equivalences of $\oC$-linear categories as follows:
\begin{equation}\label{equiv-SF}
	\rep_{\flat,1}^\text{fd}(\hat{\mathfrak{h}})
	\xrightarrow{~~\sim~~}
	\repsv \algGr =: \catSF_0
\quad , \qquad
	\rep_{\flat,1}^\text{fd}(\hat{\mathfrak{h}}_\text{tw})
	\xrightarrow{~~\sim~~}
	\svect =: \catSF_1 \ .
\end{equation}
Here, $\repsv \algGr$ denotes the category of finite-dimensional representations of $\algGr$ in super-vector spaces, respecting the $\oZ_2$-grading of $\algGr$. We write
\begin{equation}\label{eq:catSF-dec}
\catSF := \catSF_0 \oplus  \catSF_1
\end{equation}
for the category whose objects and morphism spaces are direct sums of those in $\catSF_0$ and $\catSF_1$.

Next, we will use the chiral conformal field theory of symplectic fermions to endow $\catSF$ with a braided monoidal structure.

\subsection{Tensor product}

The tensor product $X * Y$ of two objects $X,Y \in \catSF$ is defined as a representing object for the functor which assigns to an object $Z \in \catSF$ the space of vertex operators from $X$ and $Y$ to $Z$ -- or rather between their preimages in $\rep_{\flat,1}^\text{fd}(\hat{\mathfrak{h}}_\text{(tw)})$
under the equivalence
in~\eqref{equiv-SF}. For details we refer to \cite[Sect.\,3]{Runkel:2012cf}. The result is \cite[Thm.\,3.13]{Runkel:2012cf}:
\begin{equation}\label{eq:*-tensor}
X \ast Y ~=~
\left\{\rule{0pt}{2.8em}\right.
\hspace{-.5em}\raisebox{.7em}{
\begin{tabular}{ccll}
   $X$ & $Y$ & $X \ast Y$ &
\\
$\catSF_0$ & $\catSF_0$ & $X \otimes_{\repsv\algGr} Y$ & $\in~\catSF_0$
\\
 $\catSF_0$ & $\catSF_1$ & $F(X) \svtensor Y$ & $\in~\catSF_1$
\\
 $\catSF_1$ & $\catSF_0$ & $X \svtensor F(Y)$ & $\in~\catSF_1$
\\
$\catSF_1$ & $\catSF_1$ & $\algGr \svtensor X \svtensor Y$ & $\in~\catSF_0$
\end{tabular}}
\end{equation}
Here, $F : \repsv\algGr \to \svect$ is the forgetful functor, and $X \otimes_{\repsv\algGr} Y$ stands for $X \svtensor Y$ with $\algGr$-action via the coproduct. On morphisms we simply have $f * g = f \tensor g$ in all cases but the last, where $f *g = \id_G \tensor f \tensor g$.

We remark that in $\svect$, the action of $\algGr$ on $X \svtensor Y$ involves the symmetric braiding
\begin{equation}\label{sflip}
\sflip_{X,Y} : X \svtensor Y \longrightarrow Y \svtensor X
\quad , \quad
\sflip_{X,Y}(x \tensor y) = (-1)^{|x||y|} y \tensor x \ ,
\end{equation}
of $\svect$ (where $x,y$ are homogeneous of $\oZ_2$-degree $|x|$, $|y|$). Explicitly, the action map $\rho^{X \otimes Y} : \algGr \svtensor X \svtensor Y \to X \svtensor Y$ is given by
\begin{equation}\label{eq:action-on-tensor-svect}
\rho^{X \otimes Y} = (\rho^X \tensor \rho^Y) \circ (\id_{\algGr} \tensor \sflip_{\algGr,X} \tensor \id_Y) \circ (\Delta \tensor \id_X \tensor \id_Y) \ ,
\end{equation}
or, on elements,
\begin{equation}
	g.(x\otimes y) = \sum_{(g)} (-1)^{|g''||x|} (g'.x) \otimes (g''.y)
\quad \text{where} \quad \Delta(g) = \sum_{(g)} g' \otimes g'' \ .
\end{equation}

\subsection{Associator}\label{sec:assoc-SF}

The associator of $\catSF$ is computed in \cite{Runkel:2012cf} via the usual procedure of comparing asymptotic behaviour of four-point conformal blocks. The four-point blocks in turn are defined via appropriate compositions of two vertex operators. Here we just list the result of this calculation.

Both, the associator and the braiding are expressed in terms of a constant $\beta \in \oC$ 
	satisfying \eqref{eq:beta-param}
and a copairing $C$ on $\algGr$
given by \cite[Eqn.\,(5.19)]{Davydov:2012xg}\footnote{
	To obtain the braided monoidal category of symplectic fermions as computed in \cite{Runkel:2012cf}, one has to set $\beta = e^{- \pi \rmi/4}$.
	The relation to the copairing $\Omega=\chi^- \tensor \chi^+ - \chi^+ \tensor \chi^-$ used in \cite{Runkel:2012cf} is $C = \pi \rmi \Omega$.}
\begin{equation}
	C := a_2 \tensor a_1 - a_1 \tensor a_2
     \quad \in~ \algGr \svtensor \algGr \ .
\end{equation}

The associator is a natural family of isomorphisms
\begin{equation*}
\alpha^{\catSF}_{X,Y,Z} :\quad X*(Y*Z) \to (X*Y)*Z \ .
 \end{equation*}
 Its form depends on whether $X,Y,Z$ are chosen from $\catSF_0$ or $\catSF_1$. There are eight possibilities, which we now list and then explain the notation (see \cite[Thm.\,6.2]{Runkel:2012cf} and \cite[Sect.\,5.2\,\&\,Thm.\,2.5]{Davydov:2012xg}):
\begin{align*}
X\,Y\,Z \qquad & X \ast (Y \ast Z) && (X \ast Y) \ast Z && \alpha^{\catSF}_{X,Y,Z} : X \ast (Y \ast Z) \to (X \ast Y) \ast Z
\nonumber\\[.3em]
0\,~0\,~0\, \qquad &
\underline{X} \tensor \underline{Y} \tensor \underline{Z} &&
\underline{X} \tensor \underline{Y} \tensor \underline{Z} &&
\id_{X \otimes Y \otimes Z}
\nonumber\\[.3em]
0\,~0\,~1\, \qquad &
X \tensor Y \tensor Z &&
X \tensor Y \tensor Z &&
\id_{X \otimes Y \otimes Z}
\nonumber\\[.3em]
0\,~1\,~0\, \qquad &
X \tensor Y \tensor Z &&
X \tensor Y \tensor Z &&
\exp\!\big(C^{(13)} \big)
\nonumber\\[.3em]
1\,~0\,~0\, \qquad &
X \tensor Y \tensor Z &&
X \tensor Y \tensor Z &&
 \id_{X \otimes Y \otimes Z}
\nonumber\\[.3em]
0\,~1\,~1\, \qquad &
\underline{X} \tensor \underline{\algGr}\tensor Y \tensor Z &&
\underline{\algGr} \tensor X\tensor Y \tensor Z &&
\Big[\big\{ \id_{\algGr} \otimes (\rho^X \circ (S \otimes \id_X))  \big\}
\nonumber\\[-.1em]
&&&&&\qquad \circ \big\{ 
	\Delta
\otimes \id_X \big\} \circ \sflip_{X,\algGr} \Big] \otimes \id_{Y \otimes Z}
\nonumber\\[.3em]
1\,~0\,~1\, \qquad &
\underline{\algGr} \tensor X\tensor Y \tensor Z &&
\underline{\algGr} \tensor X\tensor Y \tensor Z &&
\exp\!\big( C^{(13)} \big)
\nonumber\\[.3em]
1\,~1\,~0\, \qquad &
\underline{\algGr} \tensor X\tensor Y \tensor Z &&
\underline{\algGr} \tensor X \tensor Y \tensor \underline{Z} &&
\big\{ \id_{\algGr \otimes X \otimes Y} \otimes \rho^Z \big\}
  \circ
  \big\{ \id_{\algGr} \otimes \sflip_{\algGr,X \otimes Y} \otimes \id_Z  \big\}
\nonumber\\[-.1em]
&&&&&\qquad
  \circ
  \big\{ \Delta \otimes \id_{X \otimes Y \otimes Z} \big\}
\nonumber\\[.3em]
1\,~1\,~1\, \qquad &
X \tensor \algGr \tensor Y \tensor Z &&
\algGr \tensor X \tensor Y \tensor Z &&
\big\{ \phi \otimes \id_{X \otimes Y \otimes Z} \big\} \circ \big\{\sflip_{X,\algGr} \otimes \id_{Y \otimes Z}\big\}
\end{align*}
If the sector `1' occurs an even number of times, the triple tensor product lies in $\catSF_0$ and hence carries an action of $\algGr$. The underlines indicate on which  tensor factors $\algGr$ acts (action on multiple factors is always via the coproduct). The action of $C^{(13)}$ is given by letting the first tensor factor of $C$ act on $X$ and the second on $Z$ (using the braiding of $\svect$ to move elements past each other). Explicitly,
\begin{equation}
	C^{(13)}(x \tensor y \tensor z) =
(-1)^{|x|+|y|} \big( a_2.x \tensor y \tensor a_1.z - a_1.x \tensor y \tensor a_2.z \big) \ ,
\end{equation}
for homogeneous $x,y,z$. The linear map $\phi : \algGr \to \algGr$ is given by
\begin{equation}\label{eq:SF-phi-def}
	\phi(1) = \beta^2 \, a_1 a_2
~~,\quad
	\phi(a_1) = \beta^2 \, a_1
~~,\quad
	\phi(a_2) = \beta^2 \, a_2
~~,\quad
	\phi(a_1 a_2) = - \beta^2 \, 1
\ .
\end{equation}

\subsection{Braiding}\label{sec:SF-braiding} The braiding on $\catSF$ is obtained from the monodromy properties of symplectic fermion three-point blocks.\footnote{
	Actually, this initially only produces a braiding in the sense of crossed $\oZ_2$-categories \cite[Sect.\,4]{Runkel:2012cf}. However, with the help of the parity involution on $\svect$, this can be turned into the `proper' braiding as stated in Section~\bref{sec:SF-braiding}, see  \cite[Sect.\,6.3]{Runkel:2012cf}.}
The resulting family of natural isomorphism $c_{X,Y}$ is  (see \cite[Thm.\,6.4]{Runkel:2012cf} and \cite[Sect.\,5.2\,\&\,Thm.\,2.8]{Davydov:2012xg}):
\begin{equation*}
\raisebox{.7em}{
\begin{tabular}{ccl}
 $X$ & $Y$ & $c_{X,Y} ~:~ X \ast Y \to Y \ast X$
\\[.3em]
 $0$ & $0$ & $ \sflip_{X,Y} \circ \exp(- C) $
\\[.3em]
 $0$ & $1$ & $\sflip_{X,Y} \circ \big\{ \exp\!\big( \tfrac12\hat C\big) \tensor \id_Y \big\}$
\\[.3em]
 $1$ & $0$ & $\sflip_{X,Y}  \circ \big\{ \id_X \tensor  \exp\!\big( \tfrac12\hat C\big) \big\} \circ \{ \id_X \otimes \omega_Y \}$
\\[.3em]
 $1$ & $1$ & $\beta \cdot
  \big(\id_{\algGr} \otimes \sflip_{X,Y}\big)\circ \big\{ \exp\!\big(- \tfrac12\hat C\big) \otimes \id_X \tensor \omega_Y \big\}$
\end{tabular}}
\end{equation*}
Here, $\hat C$ is the multiplication of $\algGr$ applied to $C$, i.e.\ $\hat C = -2 a_1 a_2$. For $X \in \svect$,
\begin{equation}
	\omega_X : X \xrightarrow{~\sim~} X
\quad,\quad
	\omega_X(x) = (-1)^{|x|} \, x \ ,
\end{equation}
denotes the parity involution on $X$. The family $X \mapsto \omega_X$ is a natural monoidal isomorphism of the identity functor on $\svect$.

\subsection{Ribbon twist}\label{eq:SF-twist}

The braided monoidal structure on $\catSF$ can be enhanced to a ribbon structure, see \cite[Sect.\,4]{Davydov:2012xg}. Here we will only make use of the ribbon twist isomorphisms $\theta_X$, which are given by \cite[Rem.\,6.5]{Runkel:2012cf}:
\begin{equation*}
\raisebox{.7em}{
\begin{tabular}{ccl}
 $X$ & $\theta_X : X \to X$
\\[.3em]
 $0$ & $ \exp(-\hat C) $
\\[.3em]
 $1$ & $\beta^{-1} \cdot \omega_X$
\end{tabular}}
\end{equation*}
The twist isomorphisms satisfy
\begin{equation}
	\theta_{X * Y} = (\theta_X \tensor \theta_Y) \circ c_{Y,X} \circ c_{X,Y} \ .
\end{equation}

For symplectic fermions, the ribbon twist	acts
by $\exp(-2 \pi \rmi L_0)$ on the parity even subspace of a given representation. The even subspaces of the four irreducibles in $\rep_{\flat,1}^\text{fd}(\hat{\mathfrak{h}}) \oplus \rep_{\flat,1}^\text{fd}(\hat{\mathfrak{h}}_\text{tw})$ have lowest $L_0$ eigenvalues $h$ as follows:
\begin{center}
\begin{tabular}{cc}
untwisted & \qquad\quad twisted \\
$h=0$,\quad $h=1$ & \quad\qquad $h = -\frac18$,\quad $h=\frac38$
\end{tabular}
\end{center}
This agrees with the expression in $\catSF$ given above for  $\beta = e^{- \pi \rmi/4}$, as it should.

\bigskip

The subcategory $\catSF_0$ of $\catSF$ is closed under the tensor product $*$. When restricted to $\catSF_0$, the associator in Section \bref{sec:assoc-SF} becomes trivial.
The braiding $\brC_{U,V}$ restricted to $\catSF_0$ can be described by an R-matrix,
\begin{equation}\label{eq:C00-br}
\brC_{U,V} = \sflip_{U,V}\circ \Rsf \ ,
\end{equation}
with $\Rsf = \exp(-C) =1 - C + \frac12 C^2$ (since $C^3=0$). When evaluating $\Rsf$ on elements, one has to keep track of the parity signs which arise in $C^2$ and when acting on $u \tensor v$,
\begin{equation}
\Rsf(u\tensor v) = u\tensor v -(-1)^{|u|}a_2.u\tensor a_1.v + (-1)^{|u|}a_1.u\tensor a_2.v
-a_1a_2.u\tensor a_1a_2.v \ .
\end{equation}
The ribbon twist on $\catSF_0$ is given by $\theta_U(u) = u + 2a_1a_2.u$.

\section{A quasi-Hopf algebra in $\svect$}\label{sec:S}
Here we will introduce the quasi-Hopf algebra $\Salg$ in $\svect$, which in Section~\bref{sec:proof}
will serve as an intermediate step when transporting the monoidal  structure, braiding
and the ribbon twist from
$\catSF$ to $\repQ$.
   One can try to skip this step (and we did try), but the different braidings in $\vect$ and $\svect$ become cumbersome to deal with.

We will begin by collecting the various structures on $\Salg$ in the next definition and then will provide the proofs for the claims made in the process.
The definition of the coproduct and coassociator of $\Salg$ will seem opaque and ad hoc. They result from considering $\oC$-linear equivalences $\catSF \to \repS$ and $\repS \to \repQ$ and turning them into monoidal equivalences. We will elaborate on this in Remark~\bref{rem:origin-DeltaS} and Section~\bref{rem:Lambda-from-SF} below.

\begin{Dfn}\label{dfn:algS}
$\Salg$ is the eight-dimensional associative algebra over $\oC$ generated by $\xpm$ and $\LL$ with the defining relations
\begin{equation}
\xpm\LL=\LL\xpm ~~,\quad \{\xp,\xm\}=\half(\one-\LL)~~,\quad
(\xpm)^2=0~~,\quad \LL^2=\one \ .
\end{equation}
The $\oZ_2$-grading is such that $\xpm$ have odd degree and $\LL$ has even degree.
Define the central idempotents
\begin{equation}\label{eq:S-e0-e1-def}
\idem_0=\half(\one+\LL) \quad , \quad \idem_1=\half(\one-\LL) \ .
\end{equation}
They give a decomposition
\begin{equation}\label{eq:Salg-dec}
\Salg = \Salg_0 \oplus \Salg_1~~,
\qquad \text{where} \quad \Salg_0 := \idem_0\Salg ~~,~~\Salg_1 := \idem_1\Salg \ ,
\end{equation}
of $\Salg$ into a four-dimensional Gra\ss mann algebra $\Salg_0$ and a four-dimensional Clifford algebra $\Salg_1$.

The algebra $\Salg$ is equipped with a non-coassociative coproduct and a counit, namely with the algebra maps $\copS: \Salg\to\Salg\tensor\Salg$ and $\epsilon:\Salg\to\oC$ given by
\begin{align}
\copS(\xpm) &= \xpm\tensor\one + (\idem_0-\rmi\idem_1)\tensor\xpm \pm \rmi\idem_1\tensor\idem_1(\xp-\xm) ~~,
&
\epsilon(\xpm)&=0
\ ,\label{copS}\\
\nonumber
\copS(\LL) &=\LL\tensor\LL ~~,
&
\epsilon(\LL) &=1
\ .
\end{align}
We define the antipode $S : \Salg \to \Salg$ to be the algebra anti-automorphism determined by
\begin{equation}\label{eq:Salg-S}
S(\xpm) = -\xpm(\idem_0 - \rmi\idem_1),\qquad S(\LL)=\LL \ .
\end{equation}
Note that an anti-automorphism in $\svect$ satisfies
$S(ab) = (-1)^{|a||b|}S(b)S(a)$ for all homogeneous $a,b\in\Salg$.

The coassociator $\Sas \in \Salg^{\tensor 3}$ depends on a complex parameter $\beta$ satisfying $\beta^4=-1$, cf.\ \eqref{eq:beta-param}. It is given by
\begin{align}\label{eq:Sas}
\Sas = & ~ \idem_0\tensor\idem_0\tensor\idem_0 \\
&
+ \idem_1\tensor\idem_0\tensor\idem_0
+ \Sas^{010}\idem_0\tensor\idem_1\tensor\idem_0
+ \idem_0\tensor\idem_0\tensor\idem_1
\nonumber\\
& + \Sas^{110}\idem_1\tensor\idem_1\tensor\idem_0
  + \Sas^{101}\idem_1\tensor\idem_0\tensor\idem_1
  +\idem_0\tensor\idem_1\tensor\idem_1
\nonumber\\
& + \Sas^{111}\idem_1\tensor\idem_1\tensor\idem_1 ,
\nonumber
\end{align}
where
\begin{align*}
\Sas^{010} =~& \one\tensor\one\tensor\one
    + (1 + \rmi) \xm\tensor\one\tensor\xp - (1 - \rmi) \xp\tensor\one\tensor\xm -
  2 \xp\xm\tensor\one\tensor\xp\xm,
\\
\Sas^{110} =~& \one\otimes \one\otimes \one - \rmi \one\otimes (\xp + \xm)\otimes (\xp + \xm),\\
\Sas^{101} =~& \one\otimes \one\otimes \one + \rmi \one\otimes (\xp + \rmi\xm)\otimes (\xp + \rmi\xm) + (1 - \rmi)(\xp- \xm)\otimes \xp  \xm\otimes (\rmi\xp-\xm)\\
    &+ (1 + \rmi) \xm\otimes \xp\otimes \one
    - (1 - \rmi) \xp\otimes \xm\otimes \one
 + (1 + \rmi) \one\otimes \xp  \xm\otimes \one
     - 2 \xp  \xm\otimes \xp  \xm\otimes \one,\\
\Sas^{111} =~& \beta^2\big\{(\rmi-1) \one\otimes \one\otimes \one
        + (1 - \rmi) \one\otimes \one\otimes \xp\xm + (1 + 2 \rmi) \one\otimes \xm\otimes \xm 
        + \one\otimes \xm\otimes \xp 
\\ &
        + \one\otimes \xp\otimes \xm
+   \one\otimes \xp\otimes \xp + (1 - \rmi) \one\otimes \xp\xm\otimes \one - (2 - 2 \rmi) \one\otimes \xp\xm\otimes \xp\xm 
\\ &
- (1 - \rmi) \xm\otimes \one\otimes \xm 
+ (1 + \rmi) \xm\otimes\one\otimes \xp - (1 - \rmi) \xm\otimes \xm\otimes \xp  \xm 
\\ &
+ (1 + \rmi) \xm\otimes \xp\otimes \xp  \xm 
+ (1 - \rmi) \xm\otimes \xp  \xm\otimes \xm
- (1 + \rmi) \xm\otimes \xp  \xm\otimes \xp 
\\ &
+ (1 - \rmi) \xp\otimes \xm\otimes \one - (1 - \rmi) \xp\otimes \xm\otimes \xp  \xm 
- (1 + \rmi) \xp\otimes \xp\otimes \one 
\\ &
+ (1 + \rmi) \xp\otimes \xp\otimes \xp  \xm - (1 - \rmi) \xp\otimes \xp\xm\otimes \xm
+ (1 + \rmi) \xp\otimes \xp  \xm\otimes \xp 
\\ &
+ 2 \xp  \xm\otimes \one\otimes \one - 2 \xp  \xm\otimes \one\otimes \xp  \xm - 2 \rmi \xp  \xm\otimes \xm\otimes \xm 
- 2 \rmi \xp\xm\otimes \xp\otimes \xp 
\\ &
- 2 \xp\xm\otimes \xp  \xm\otimes \one + 4 \xp  \xm\otimes \xp  \xm\otimes \xp  \xm
\big\} \ .
\end{align*}

We fix the evaluation element $\Salpha$ and coevaluation element $\Sbeta$ as
\begin{equation}\label{eq:Salg-alpha}
\Salpha = \idem_0 +\idem_1(\xp+\xm) ~~,\quad
\Sbeta = \idem_0 + \beta^2 \idem_1(\xp - \xm) \ .
\end{equation}
\end{Dfn}

\begin{Prop}\label{prop:S-quasiHopf}
	The data $(\Salg,\cdot,\one,\copS,\epsilon,
	\Sas,
S,\Salpha,\Sbeta)$ is a quasi-Hopf algebra in $\svect$.
\end{Prop}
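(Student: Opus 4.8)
The plan is to verify each of the defining axioms of a quasi-Hopf algebra in $\svect$ for the explicit data given in Definition~\bref{dfn:algS}, treating the algebra, coproduct, counit, coassociator, antipode, and (co)evaluation elements one axiom at a time. The underlying category is $\svect$, so throughout one must be careful that the tensor flip is the symmetric braiding $\sflip$ of \eqref{sflip}, which inserts signs $(-1)^{|a||b|}$, and that the anti-automorphism property of $S$ reads $S(ab)=(-1)^{|a||b|}S(b)S(a)$ as noted after \eqref{eq:Salg-S}. First I would check that $\copS$ and $\epsilon$ are well-defined algebra maps in $\svect$, i.e.\ that they respect the relations of $\Salg$ and the $\oZ_2$-grading; this is a finite check on generators, most easily organised using the idempotent decomposition \eqref{eq:Salg-dec} since $\copS(\LL)=\LL\tensor\LL$ forces $\copS(\idem_i)$ to have the same block form as in \eqref{eq:cop-idem}.

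Next I would verify the counit axioms $(\epsilon\tensor\id)\copS=\id=(\id\tensor\epsilon)\copS$ (these hold strictly, with no associator corrections, as is standard) and then the \emph{quasi-coassociativity} axiom, namely that $\Sas$ intertwines $(\copS\tensor\id)\copS$ and $(\id\tensor\copS)\copS$, together with the pentagon identity for $\Sas$ and the counit condition $(\id\tensor\epsilon\tensor\id)(\Sas)=\one\tensor\one$. Because $\Sas$ is written block-componentwise in \eqref{eq:Sas} against the idempotents $\idem_i^{\tensor 3}$, these identities decompose into eight independent sectors indexed by $(i,j,k)\in\oZ_2^3$, and in the sectors where $\Sas$ acts as the identity they are either trivial or reduce to the group-like behaviour of the $\idem_i$. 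The nontrivial content sits in the sectors $010$, $101$, $110$, $111$, where $\Sas^{ijk}$ is a genuine element of $\Salg^{\tensor 3}$; here the verification becomes a direct but lengthy computation in the PBW basis of $\Salg^{\tensor 3}$.

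Finally I would check the antipode axioms for a quasi-Hopf algebra with $\Salpha,\Sbeta$ from \eqref{eq:Salg-alpha}: that $\sum S(\cdot')\,\Salpha\,\cdot''=\epsilon(\cdot)\Salpha$ and $\sum \cdot'\,\Sbeta\,S(\cdot'')=\epsilon(\cdot)\Sbeta$ on all of $\Salg$ (it suffices to check on the generators $\xpm,\LL$), and the two coassociator-twisted normalisation conditions relating $\Sas$, $S$, $\Salpha$, $\Sbeta$ (the identities $\sum S(X_i)\,\Salpha\,Y_i\,\Sbeta\,S(Z_i)=\one$ and its mirror, where $\Sas=\sum X_i\tensor Y_i\tensor Z_i$). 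All of this is finite and algorithmic, so the conceptual content is modest; the real obstacle is purely computational. I expect the main difficulty to be the pentagon identity and the antipode normalisation conditions in the fully nontrivial $111$ sector, where $\Sas^{111}$ has some twenty-five terms and the products live in the $512$-dimensional space $\Salg^{\tensor 3}$ with sign-bookkeeping from $\svect$. Accordingly I would carry out these sectors by computer algebra (as the authors indicate they do elsewhere), using the block structure to confine each verification to one sector at a time and checking the answer is independent of the choice of $\beta$ beyond its appearance through $\beta^2$ and the constraint $\beta^4=-1$.
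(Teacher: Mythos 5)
Your proposal is correct and follows essentially the same route as the paper: the paper also verifies by hand that $\epsilon$, $\copS$ are algebra maps and $S$ an anti-algebra map in $\svect$ (Lemma~\bref{lem:DeltaS-alg-map}), and then checks all remaining quasi-bialgebra and antipode axioms \eqref{eq:eps-Delta}--\eqref{eq:3-cocycle}, \eqref{eq:Salpha-1}, \eqref{eq:Salpha-2} by computer algebra, noting as you do that only the conditions involving $\Sas$ are impractical by hand. Your sector-by-sector organisation via the idempotents $\idem_i^{\tensor 3}$ is a sensible refinement of the same computation rather than a different argument.
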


 We have actually two different quasi-Hopf structures on $\Salg$ depending on $\beta^2 \in \{\pm\rmi\}$.
 As for $\catSF$, we will not indicate the $\beta$ dependence in the notation for $\Salg$.

The proof of Proposition~\bref{prop:S-quasiHopf} is given in the next two lemmas.

\begin{Lemma}\label{lem:DeltaS-alg-map}
	$\eps : \Salg \to \oC^{1|0}$ and $\copS:\Salg\to\Salg\svtensor\Salg$ are algebra homomorphisms; $S : \Salg \to \Salg$ is an algebra anti-homomorphism.
\end{Lemma}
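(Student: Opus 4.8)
The plan is to verify directly on generators that the three maps respect the defining relations of $\Salg$, using that an algebra (anti-)homomorphism in $\svect$ is determined by its values on generators once we know it is well-defined, i.e.\ once we check it kills the relations. The defining relations are $\xpm\LL=\LL\xpm$, $\{\xp,\xm\}=\half(\one-\LL)$, $(\xpm)^2=0$, and $\LL^2=\one$. For each of $\eps$, $\copS$, $S$ I would substitute the prescribed images of the generators into each relation and confirm the relation continues to hold in the appropriate target algebra ($\oC^{1|0}$, $\Salg\svtensor\Salg$, or $\Salg$ respectively), keeping careful track of Koszul signs since $\xpm$ are odd.

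For the counit $\eps$ the check is immediate: $\eps(\xpm)=0$ and $\eps(\LL)=1$, so every relation maps to a triviality in $\oC$ (e.g.\ $\{\xp,\xm\}\mapsto 0$ matches $\eps(\half(\one-\LL))=\half(1-1)=0$). For the antipode $S$, I would recall that an anti-homomorphism in $\svect$ satisfies $S(ab)=(-1)^{|a||b|}S(b)S(a)$, so I must verify the relations with the order reversed and the sign inserted; for instance $\{\xp,\xm\}$ must map under $S$ to $S(\half(\one-\LL))=\half(\one-\LL)=\idem_1$, and I would compute $S(\xp)S(\xm)+S(\xm)S(\xp)$ from \eqref{eq:Salg-S} (with the appropriate sign for the anticommutator of odd elements) and check it equals $\idem_1$. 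The relations $\LL^2=\one$, $(\xpm)^2=0$, and the $\LL$-commutation are analogous and short.

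The genuinely substantive case is $\copS$ being an algebra homomorphism, because the coproduct formula \eqref{copS} for $\xpm$ has three terms and involves the idempotents $\idem_0,\idem_1$. The key relation to verify is $\{\copS(\xp),\copS(\xm)\}=\half(\one\tensor\one-\copS(\LL))=\half(\one\tensor\one-\LL\tensor\LL)$, and this is where the $\pm\rmi\idem_1\tensor\idem_1(\xp-\xm)$ correction terms earn their keep. I would expand the graded anticommutator in $\Salg\svtensor\Salg$, using that multiplication in the tensor product algebra carries the sign rule $(a\tensor b)(c\tensor d)=(-1)^{|b||c|}(ac)\tensor(bd)$, and simplify using the centrality of the idempotents together with $\idem_0\idem_1=0$, $\idem_i^2=\idem_i$, and $\idem_0+\idem_1=\one$. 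One should also confirm $\copS(\xpm)^2=0$ and $\copS(\xpm)\copS(\LL)=\copS(\LL)\copS(\xpm)$; the former again uses the sign rule on odd$\,\tensor\,$odd products. I expect the anticommutator $\{\copS(\xp),\copS(\xm)\}$ to be the main obstacle, since it is the only relation in which the $\idem_1\tensor\idem_1$ correction and the factors of $\rmi$ must conspire to reproduce $\half(\one\tensor\one-\LL\tensor\LL)$ exactly; the other checks are routine sign-bookkeeping. This final verification is short enough to do by hand, but given the paper's reliance on Mathematica it could equally be confirmed by computer algebra.
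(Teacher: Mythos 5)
Your proposal follows essentially the same route as the paper's proof: a direct verification on generators that $\eps$, $\copS$ and $S$ respect the four defining relations of $\Salg$, with careful Koszul-sign bookkeeping in $\svect$, the anticommutator relation for $\copS$ being the only substantive check. One caution on the antipode case, which the paper flags with ``note the signs'': since $S(ab)=(-1)^{|a||b|}S(b)S(a)$ and $\xpm$ are odd, the relation to verify is $-S(\xp)S(\xm)-S(\xm)S(\xp)=S(\idem_1)=\idem_1$ (not the plain anticommutator equalling $\idem_1$), and this indeed holds because $S(\xp)S(\xm)+S(\xm)S(\xp)=(\xp\xm+\xm\xp)(\idem_0-\rmi\idem_1)^2=\idem_1(\idem_0-\idem_1)=-\idem_1$.
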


\begin{proof}
For $\eps$ the statement is clear.  For $\copS$ a straightforward calculation in $\svect$ shows that
$\copS(\xpm)\copS(\xpm)=0$,
$\copS(\LL)\copS(\xpm)=\copS(\xpm)\copS(\LL)$, and
$\copS(\xp)\copS(\xm)+\copS(\xm)\copS(\xp)=\copS(\idem_1)$.
For $S$ one needs to check (note the signs in the last relation)
$S(\xpm)S(\xpm)=0$,
$S(\LL)S(\xpm)=S(\xpm)S(\LL)$, and
$-S(\xp)S(\xm)-S(\xm)S(\xp)=S(\idem_1)$, which is again straightforward.
\end{proof}

However, $\copS$ is not coassociative, since, e.g.,
\begin{align*}&\big((\copS\tensor\id)\circ \copS\big)(\xp+\xm)
- \big((\id\tensor\copS) \circ \copS\big)(\xp+\xm) \\&\qquad =
\tfrac12 (\one - \LL) \tensor (\one - \LL) \tensor (\xp+\xm) \neq 0 \ .
\end{align*}

\begin{Lemma}
Conditions \eqref{eq:eps-Delta}--\eqref{eq:3-cocycle} in Definition \bref{def:quasi-bialgebra} hold; conditions \eqref{eq:Salpha-1} and \eqref{eq:Salpha-2} in Definition \bref{def:quasi-Hopf_ass1} hold.
\end{Lemma}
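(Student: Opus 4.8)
The plan is to verify, by direct computation in the symmetric monoidal category $\svect$, the axioms of a quasi-bialgebra and then the two antipode compatibility conditions, for the explicit data given in Definition~\bref{dfn:algS}. Since $\copS$, $\eps$ and $S$ are already known to be (anti-)algebra maps by Lemma~\bref{lem:DeltaS-alg-map}, all remaining conditions are equalities of elements in tensor powers of the eight-dimensional algebra $\Salg$, and since every structure map is an algebra (anti-)homomorphism it suffices to check each identity on the algebra generators $\xp,\xm,\LL$ (and on the idempotents $\idem_0,\idem_1$, which are polynomials in $\LL$). The parameter $\beta$ enters only through $\beta^2\in\{\pm\rmi\}$, so in practice one checks the two cases uniformly using $\beta^4=-1$.

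First I would dispatch the counit axioms \eqref{eq:eps-Delta}: applying $(\eps\tensor\id)\circ\copS$ and $(\id\tensor\eps)\circ\copS$ to $\xpm$ and $\LL$ and using $\eps(\idem_0)=1$, $\eps(\idem_1)=0$ gives the identity on each generator. Next comes the central identity, the $3$-cocycle (pentagon) condition \eqref{eq:3-cocycle} for $\Sas$, together with the two mixed coassociativity conditions \eqref{eq:assoc-coprod-1}--type relations that say $\Sas$ intertwines $(\copS\tensor\id)\circ\copS$ with $(\id\tensor\copS)\circ\copS$. The latter is the quasi-coassociativity axiom
\[
\Sas\cdot\big((\copS\tensor\id)\circ\copS\big)(a)
=\big((\id\tensor\copS)\circ\copS\big)(a)\cdot\Sas
\qquad\text{for all }a\in\Salg,
\]
which need only be tested on $a\in\{\xp,\xm,\LL\}$; the failure of coassociativity of $\copS$ recorded after Lemma~\bref{lem:DeltaS-alg-map} is exactly what $\Sas$ is designed to repair. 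The pentagon axiom \eqref{eq:3-cocycle} is then an identity purely among copies of $\Sas$ and $(\id\tensor\copS\tensor\id)\Sas$ in $\Salg^{\tensor 4}$, which I would verify component-wise against the idempotent decomposition: because the idempotents are group-like up to the flip encoded in \eqref{eq:cop-idem}, the pentagon splits into blocks indexed by $\oZ_2$-degrees, and only the blocks involving the nontrivial components $\Sas^{010},\Sas^{101},\Sas^{110},\Sas^{111}$ require real work.

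Finally I would check the antipode conditions \eqref{eq:Salpha-1}--\eqref{eq:Salpha-2} in Definition~\bref{def:quasi-Hopf_ass1}, which relate $S$, the evaluation element $\Salpha$ and coevaluation element $\Sbeta$ through $\Sas$; concretely these state that the two composites built from $S$, $\Salpha$, $\Sbeta$ and the components of $\Sas$ collapse to $\id$ after applying $\eps$ appropriately. With $\Salpha,\Sbeta$ as in \eqref{eq:Salg-alpha} these reduce to a finite set of identities on the generators, again checked block-by-block on $\Salg_0$ and $\Salg_1$, where the $\beta^2$ factors in $\Sbeta$ and in $\Sas^{111}$ conspire via $\beta^4=-1$.

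The main obstacle is purely computational: the quasi-coassociativity and pentagon identities involve long products of the many-term expressions $\Sas^{010},\Sas^{101},\Sas^{111}$ in $\Salg^{\tensor 3}$ and $\Salg^{\tensor 4}$, and keeping track of the Koszul signs from the symmetric braiding $\sflip$ of $\svect$ (which appear whenever odd generators are commuted past one another, e.g.\ in $\copS(\xpm)\copS(\xmp)$) is error-prone by hand. I would therefore organise the verification by the $\oZ_2$-grading induced by the $\idem_i$ and handle the sign bookkeeping systematically — this is precisely the kind of step the authors indicate was carried out with computer algebra.
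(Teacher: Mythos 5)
Your overall strategy is the paper's own: the paper's proof of this lemma is literally a one-line appeal to computer algebra, with the remark that the conditions not involving $\Sas$ can be checked by hand. Your division of labour --- generator checks for \eqref{eq:eps-Delta} and the multiplicatively-reducible conditions, then block-by-block machine verification organised by the $\oZ_2$-grading with Koszul-sign bookkeeping for the $\Sas$-heavy identities \eqref{eq:as-intertwiner}, \eqref{eq:3-cocycle} and \eqref{eq:Salpha-2} --- is exactly what was done. Your reduction of \eqref{eq:Salpha-1} to the generators $\xp,\xm,\LL$ is sound, but it rests on a small lemma you should state: the zig-zag condition is multiplicative in $\svect$ because $\eps$ vanishes on odd elements, so the Koszul signs arising from $\copS(ab)=\copS(a)\copS(b)$ and from the anti-multiplicativity of $S$ drop out.

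One genuine error to fix before implementing: your displayed quasi-coassociativity identity has $\Sas$ on the wrong side for this paper's conventions. Equation \eqref{eq:as-intertwiner} demands $(\copS\tensor\id)(\copS(a)) = \Sas\cdot\bigl((\id\tensor\copS)(\copS(a))\bigr)\cdot\Sas^{-1}$, while you wrote $\Sas\cdot\bigl((\copS\tensor\id)\circ\copS\bigr)(a) = \bigl((\id\tensor\copS)\circ\copS\bigr)(a)\cdot\Sas$, which is Drinfeld's opposite orientation and amounts to the paper's axiom for $\Sas^{-1}$ rather than for $\Sas$. For a fixed invertible element the two conditions are not equivalent (they coincide only if $\Sas^2$ commutes with the image of $(\id\tensor\copS)\circ\copS$), so a literal implementation of your equation against the element \eqref{eq:Sas} should be expected to fail; swap the sides. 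Two smaller omissions: the lemma also covers the counitality condition \eqref{eq:counital}, $(\id\tensor\eps\tensor\id)(\Sas)=\one\tensor\one$, which you do not list (it is immediate, since $\eps$ kills $\idem_1$ and $\xpm$, so only the sectors with $\idem_0$ in the middle survive and their middle legs reduce to $\one$); and the label you cite for the mixed coassociativity condition does not exist in the paper --- the relevant one is \eqref{eq:as-intertwiner} --- though since you write the condition out explicitly this is cosmetic.
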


\begin{proof}
The conditions have been checked by computer algebra. (The conditions not involving $\Sas$ can also be checked by hand, it is just $\Sas$ that is impractical to work with.)
\end{proof}

According to Definitions \bref{def:quasi-bialgebra} and \bref{def:quasi-Hopf_ass1}, the above two lemmas establish that $\Salg$ is a quasi-Hopf algebra.

\section{An equivalence from $\repS$ to $\repQ$}\label{sec:RepS-RepQ}

We denote by $\repS$ the category of finite-dimensional super-vector space representations of $\Salg$. According to the decomposition~\eqref{eq:Salg-dec}, we have the decomposition
\begin{equation}\label{eq:Z2-grading-repS}
\repS = \repS_0 \oplus \repS_1
\end{equation}
of the representation category.

By Lemma \bref{lem:DeltaS-alg-map}, the map $\copS$ defines a tensor product on $\repS$. Since the coproduct $\copS$ applied to $\idem_0,\idem_1$ takes the form  \eqref{eq:cop-idem}, the decomposition \eqref{eq:Z2-grading-repS} is compatible with the tensor product.

\subsection{The functor $\funSQ$}\label{sec:funSQ-def}
Define the $\oC$-linear functor $\funSQ: \repS \to \repQ$ as follows.
For a given $U\in\repS$, let $\funSQ(U)$ be the $\Q$-module with underlying vector space $U$, and with $\Q$-action given by, for $u \in U$,
\begin{align}\label{eq:funSQ}
\K. u &~:=~ \z . \omega_U(u)  = \omega_U(\z . u) ~~ , \quad \text{where} \quad \z=\idem_0+\rmi\idem_1 \ ,
\\
\nonumber
\ff^{\pm}. u &~:=~ \xpm. u \ .
\end{align}
Here, $\omega_U$ is the parity involution on the super-vector space $U$.
It is easy to see  that \eqref{eq:funSQ} indeed gives  a $\Q$-module structure on $U$. In particular, $\K.(\K. u)= \z.\omega_U\big(\z . \omega_U(u)\big) = \LL. u$, where we used $\z^2 = \LL$. So, $\K^2$ acts as $\LL$ and $\idem_i\in\Q$ ($i=0,1$) acts as $\idem_i\in\Salg$, making the notation consistent. On an $\Salg$-module map $f : U \to V$ we set $\funSQ(f) = f$.

\begin{Prop}\label{prop:G-equiv}
The functor $\funSQ$ is an equivalence of $\oC$-linear categories.
\end{Prop}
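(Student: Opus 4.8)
The plan is to show that $\funSQ$ is fully faithful and essentially surjective. The functor is the identity on underlying vector spaces and on morphisms, so full faithfulness amounts to checking that a $\oC$-linear map $f : U \to V$ is an $\Salg$-module map if and only if it is a $\Q$-module map (where $U,V$ carry the $\Q$-structures defined in \eqref{eq:funSQ}). First I would verify this equivalence of conditions directly from the definitions. A map commuting with the $\xpm$-action clearly commutes with the $\ff^{\pm}$-action since these coincide; conversely, $\K$ acts as $\z \circ \omega_U$, and since $\z = \idem_0 + \rmi\idem_1 \in \Salg$ acts by an element of $\Salg$ while $\omega_U$ is natural (any $\oC$-linear $f$ respecting the $\oZ_2$-grading commutes with parity involutions), commuting with the $\Salg$-action forces commuting with $\K$. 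The key subtlety here is that morphisms in $\repS$ are \emph{even} (grading-preserving) $\oC$-linear maps, so they automatically commute with $\omega_U$; this is what makes the $\K$-action condition equivalent to the $\LL$ and parity data on the $\Salg$ side.

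For essential surjectivity, I would start from an arbitrary $\Q$-module $W$ and reconstruct an $\Salg$-module structure on it whose image under $\funSQ$ is $W$. The natural approach is to invert the formulas in \eqref{eq:funSQ}: set $\xpm := \ff^{\pm}$ acting as on $W$, define the $\oZ_2$-grading on $W$ via the eigenspaces of $\K^2 = \LL$ together with the parity information recovered from $\K$, and then define $\LL := \K^2$. Concretely, since $\idem_0,\idem_1 \in \Q$ are central idempotents giving $W = \idem_0 W \oplus \idem_1 W$, and on each summand $\K$ acts with $\K^2 = \pm 1$, one extracts $\omega_W$ by writing $\K = \z \circ \omega_W$, i.e.\ $\omega_W := \z^{-1}\K$ where $\z^{-1} = \idem_0 - \rmi\idem_1$. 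I would check that $\omega_W$ so defined is an involution squaring to the identity (using $\K^2 = \LL$ and $\z^2 = \LL$), hence defines a genuine super-vector-space structure, and that the resulting $\xpm$ satisfy the defining relations of $\Salg$ (the relation $\{\xp,\xm\} = \half(\one-\LL)$ translating from the fermionic relation $\{\ff^+,\ff^-\} = \idem_1$ established in \eqref{eq:ferm-EF}).

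I expect the main obstacle to be verifying that the reconstructed super-structure is consistent and that $\funSQ$ applied to it returns exactly $W$ (rather than just an isomorphic module). The delicate point is that the grading on $W$ is not given a priori but must be manufactured from $\K$, and one must confirm that $\xpm = \ff^{\pm}$ act as \emph{odd} operators with respect to this manufactured grading --- this requires checking the compatibility $\K \ff^{\pm} = - \ff^{\pm} \K$ from \eqref{Ui-relations} against the definition $\ff^{\pm} = \xpm$ acting oddly, i.e.\ $\omega_W \xpm \omega_W = -\xpm$. Once this is in place, $\funSQ$ of the reconstructed module recovers $\K = \z\,\omega_W$ on the nose. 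Since the construction $W \mapsto (W,\text{$\Salg$-structure})$ is manifestly two-sided inverse to $\funSQ$ on objects and acts as the identity on morphisms, this simultaneously yields an explicit inverse functor, giving the stronger statement that $\funSQ$ is an \emph{isomorphism} of categories (and in particular an equivalence), which I would remark on at the end.
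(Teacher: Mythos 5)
Your proposal is correct and takes essentially the same route as the paper: the paper's proof likewise constructs an explicit inverse functor by equipping a $\Q$-module $V$ with the manufactured parity involution $\omega_V(v)=(\idem_0-\rmi\idem_1)\K. v$ (your $\z^{-1}\K$), letting $\LL$ act as $\K^2=\idem_0-\idem_1$ and $\xpm$ as $\ff^{\pm}$, and it ends with the same observation that the two composites are \emph{equal} (not merely naturally isomorphic) to the identity functors, so the categories are even isomorphic. Your opening fully-faithfulness discussion and the oddness check $\omega_W\xpm\omega_W=-\xpm$ (which follows from $\K\ff^{\pm}=-\ff^{\pm}\K$ and centrality of $\idem_0,\idem_1$) are correct verifications that the paper leaves implicit in its ``easy check'' of the two-sided inverse.
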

\begin{proof}
We will construct a functor $\fun:\repQ\to\repS$ which is inverse to $\funSQ$. We start by giving a $\oZ_2$-grading on $\Q$-modules.
Namely, inverting the relation in \eqref{eq:funSQ} we introduce the involution $\omega_V$ on an object $V \in \repQ$ as
\begin{equation}\label{Z2-repQ}
\omega_V(v) := (\idem_0-\rmi\idem_1)\K. v~~,\qquad v\in V \ ,
\end{equation}
where $\idem_0, \idem_1$ refer to \eqref{eq:idem}.
It is easy to check that $(\omega_V)^2$ is the identity.
The parity $|v|$ of an eigenvector $v$ of $\omega_V$ is determined by the eigenvalue as $\omega_V(v) = (-1)^{|v|} v$.
In particular, the generators $\ff^{\pm}$ of $\Q$ change the parity by one.\footnote{
It is important to remark that $\Q$ is {\em not} an algebra in $\svect$. Firstly, the identity $\one$ of $\Q$ is not homogeneous (and so in particular not parity even):  $\omega_{\Q}(\one) =  (\idem_0-\rmi\idem_1)\K \ne \pm(\idem_0+\idem_1)$. Secondly, the product does not respect parity (and so is not a morphism in $\svect$). For example, for $a=b=\one$ we have $\omega(ab) \ne \omega(a)\omega(b)$.}
We have thus embedded $\repQ$ into $\svect$.

Given $V\in\repQ$, the $\Salg$-module $\fun(V)$ has $V$ as underlying super-vector space with the $\oZ_2$-grading defined by the above involution $\omega_V$. The action of $\LL$ on $\fun(V)$ is given by the action of $\idem_0-\idem_1\in\Q$, while the action of $\xpm$ is given by the action of $\ff^{\pm}$. On morphisms $f$ in $\repQ$ we set $\fun(f) = f$.

It is an easy check that the compositions $\fun\circ\funSQ$ and $\funSQ\circ\fun$ are
	actually equal (and not just naturally isomorphic)
to the identity functors $\one_{\repS}$ and $\one_{\repQ}$, respectively. 
Thus, the two categories are even isomorphic.
\end{proof}

\subsection{$\funSQ$ as multiplicative functor}

To make $\funSQ$ multiplicative, we need to find a family of isomorphisms
\begin{equation}\label{isoG}
\isoG_{U,V}:\; \funSQ(U\tensor_{\repS} V) \to \funSQ(U)\Qtensor\funSQ(V) \ ,
\end{equation}
natural in $U$ and $V$. We claim that
\begin{equation}\label{eq:isoG}
\isoG_{U,V}(u \tensor v ) := u \tensor v + \idem_1. u \tensor (\xi-1)\idem_1. v \quad , \quad \text{where} ~~\xi = \xp + \xm \ ,
\end{equation}
does the job. Invertibility is easy to see since $(\isoG_{U,V})^2 = \id_{U \otimes V}$, which is an immediate consequence of $\xi^2 = \idem_1$. Naturality is also clear. It remains to show:

\begin{Lemma}
$\isoG_{U,V}$ is an intertwiner of $\Q$-modules.
\end{Lemma}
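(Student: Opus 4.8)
The plan is to verify directly that $\isoG_{U,V}$ commutes with the action of the generators $\K$, $\ff^+$, $\ff^-$ of $\Q$ on the two sides of \eqref{isoG}. Since $\isoG_{U,V}$ is built from $\idem_1$ and $\xi=\xp+\xm$, and since the $\Q$-action on $\funSQ(-)$ is expressed through $\xpm$ and $\z=\idem_0+\rmi\idem_1$ via \eqref{eq:funSQ}, everything reduces to computations inside $\Salg$ together with the coproduct $\copS$ of \eqref{copS}. The key point is that the $\Q$-action on a tensor product $\funSQ(U)\Qtensor\funSQ(V)$ is governed by the coproduct of $\Q$ (i.e.\ by \eqref{eq:Delta-fpm} and $\Delta(\K)=\K\tensor\K$), whereas the action on $\funSQ(U\tensor_{\repS}V)$ is governed by $\copS$; the nontrivial correction term in $\isoG_{U,V}$ is precisely what intertwines these two different comultiplications.

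First I would treat the generator $\K$. Using \eqref{eq:funSQ}, $\K$ acts on $\funSQ(U)\Qtensor\funSQ(V)$ through $\Delta(\K)=\K\tensor\K$ and hence through $\z\tensor\z$ combined with the two parity involutions, while on $\funSQ(U\tensor_{\repS}V)$ it acts through $\z\,\copS(\LL)^{1/2}$-type data, i.e.\ through $\copS(\z)$ and the total parity operator. Because $\z$ and $\idem_1$ are central and $\copS(\LL)=\LL\tensor\LL$, this case should collapse to checking that $\isoG_{U,V}$ preserves the $\oZ_2$-grading up to the central factor $\z$, which is a short parity bookkeeping argument. The substantive work is in the fermionic generators: I would compute $\ff^{\pm}.\isoG_{U,V}(u\tensor v)$ and $\isoG_{U,V}(\ff^{\pm}.(u\tensor v))$ separately, expanding $\Delta(\ff^{\pm})=\ff^{\pm}\tensor\one+\K^{-1}\tensor\ff^{\pm}$ on one side and $\copS(\xpm)$ from \eqref{copS} on the other, and then show the two results agree after using $\xi^2=\idem_1$, $(\xpm)^2=0$, and the anticommutation relation $\{\xp,\xm\}=\half(\one-\LL)=\idem_1$.

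The main obstacle will be the careful tracking of the Koszul signs: although $\Q$ itself is not a superalgebra (as the footnote after \eqref{Z2-repQ} stresses), the functor $\funSQ$ relates it to the genuinely $\svect$-based $\Salg$, so the flip in $\funSQ(U)\Qtensor\funSQ(V)$ carries no sign while the coproduct $\copS$ and the correction term in $\isoG_{U,V}$ implicitly encode the super-braiding $\sflip$. Concretely, the subtle factors are the $\rmi$'s appearing in $\z=\idem_0+\rmi\idem_1$ and in the last summand of $\copS(\xpm)$, namely $\pm\rmi\idem_1\tensor\idem_1(\xp-\xm)$; I expect these to conspire with the $(\xi-1)\idem_1$ in $\isoG_{U,V}$ so that the ``extra'' term in $\copS$ relative to the naive primitive coproduct is exactly absorbed. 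Since this is a finite check on a few basis elements, it is entirely routine once the sign conventions are fixed, and I would simply present the verification on the generators $\ff^{\pm}$ and $\K$, noting that these suffice because they generate $\Q$ as an algebra and $\isoG_{U,V}$ is linear.
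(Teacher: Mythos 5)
Your proposal follows essentially the same route as the paper's own proof: one verifies $\isoG_{U,V}(a.(u\tensor v)) = a.\isoG_{U,V}(u\tensor v)$ on the generators $\K$, $\ff^{\pm}$, expanding $\Delta(\ff^{\pm})=\ff^{\pm}\tensor\one+\K^{-1}\tensor\ff^{\pm}$ on the $\repQ$ side against $\copS(\xpm)$ with Koszul signs on the $\repS$ side, exactly as the paper does for $a=\ff^{+}$ before declaring the remaining cases straightforward. The only slip is cosmetic: in the \textbf{11}-sector $\isoG_{U,V}$ acts by the odd element $\xi$ and so flips rather than preserves the parity, but that flip, together with the factors of $\rmi$ in $\z$, is exactly what makes the $\K$-check close, consistent with your own sign-bookkeeping plan.
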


\begin{proof}
We need to show that for all $a \in \Q$, $u \in U$, $v \in V$ we have
\begin{equation}\label{eq:GammaUV-Q-intertw}
	\isoG_{U,V}(a.(u \otimes v)) = a.\isoG_{U,V}(u \otimes v) \ .
\end{equation}
It is enough to verify this on the generators $\K$, $\ff^\pm$. For example, for $a=\ff^+$, the two sides of the above identity are, with $\bar\z = \idem_0 - \rmi \idem_1$,
\begin{align}
	\isoG_{U,V}\big(\ff^+.(u \otimes v)\big)
	&= 	\isoG_{U,V}\big(\xp.(u \otimes v)\big)
\\\nonumber
	&= 	\isoG_{U,V}\big( \xp. u\tensor v
+ (-1)^{|u|}\bar\z. u\tensor\xp. v
\\\nonumber
& \hspace{3em}+ (-1)^{|u|} \rmi\idem_1. u\tensor\idem_1(\xp-\xm). v\big)
\ ,
\\\nonumber
	\ff^+.\isoG_{U,V}\big(u \otimes v\big)
&= (\ff^+\tensor \one + \K^{-1} \tensor \ff^+).(u \tensor v + \idem_1. u \tensor (\xi-1)\idem_1. v )
\\\nonumber
&= \xp. u \tensor v + \xp\idem_1. u \tensor (\xi-1)\idem_1. v
\\\nonumber
& \hspace{3em}+
(-1)^{|u|} \bar\z. u \tensor \xp. v
+ (-1)^{|u|} \bar\z \idem_1. u \tensor \xp(\xi-1)\idem_1. v ) \ ,
\end{align}
which can be checked to agree. The other cases are equally straightforward.
\end{proof}

Altogether, we have shown:

\begin{Prop}\label{prop:funSQ-mult}
	With the isomorphisms $\isoG_{U,V}$ as in~\eqref{eq:isoG}, the functor $\funSQ: \repS \to \repQ$ is multiplicative.
\end{Prop}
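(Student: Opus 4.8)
The final statement to prove is Proposition~\ref{prop:funSQ-mult}, which asserts that $\funSQ$ becomes a multiplicative functor when equipped with the family $\isoG_{U,V}$ from~\eqref{eq:isoG}. Since multiplicativity requires only that the $\isoG_{U,V}$ be isomorphisms natural in $U,V$, together with a unit isomorphism $\phi_{\one}$, with \emph{no} coherence conditions imposed, the content of the proposition is precisely the three facts: $\isoG_{U,V}$ is a morphism in $\repQ$ (i.e.\ an intertwiner), it is invertible, and it is natural. The plan is therefore to assemble these three ingredients, most of which have already been established in the surrounding text.

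\textbf{Step 1 (well-definedness as a $\Q$-map).} The preceding Lemma shows that $\isoG_{U,V}$ is an intertwiner of $\Q$-modules, verified on the generators $\K,\ff^{\pm}$. I would simply invoke this lemma: it guarantees $\isoG_{U,V}$ is a morphism $\funSQ(U\tensor_{\repS}V)\to\funSQ(U)\Qtensor\funSQ(V)$ in $\repQ$.

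\textbf{Step 2 (invertibility and naturality).} Invertibility follows from the observation already recorded after~\eqref{eq:isoG}: since $\xi=\xp+\xm$ satisfies $\xi^2=\idem_1$, a direct computation gives $(\isoG_{U,V})^2=\id_{U\otimes V}$, so $\isoG_{U,V}$ is its own inverse. Naturality in $U$ and $V$ is immediate because the formula~\eqref{eq:isoG} is built only from the $\Salg$-actions of the central idempotent $\idem_1$ and of $\xi$, which commute with any $\Salg$-module map $f$; hence the naturality square
\begin{equation*}
\isoG_{U',V'}\circ\funSQ(f\tensor g)=\bigl(\funSQ(f)\Qtensor\funSQ(g)\bigr)\circ\isoG_{U,V}
\end{equation*}
commutes on the nose. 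I would spell this out in one line using $\funSQ(f)=f$ on morphisms.

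\textbf{Step 3 (the unit isomorphism).} To complete the data of a multiplicative functor I must exhibit $\phi_{\one}:\funSQ(\one)\to\one$. The monoidal unit of $\repS$ is the one-dimensional module $\oC^{1|0}$ on which $\xpm$ act as $0$ and $\LL$ acts as $1$, so it lies in $\repS_0$; applying $\funSQ$ gives the trivial $\Q$-module $\XX^{+}_1$, which is the monoidal unit of $\repQ$. Thus I take $\phi_{\one}=\id$, the obvious identification. This requires no coherence check.

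The main obstacle is genuinely only Step~1, and that labour has already been discharged in the Lemma immediately preceding the proposition — the remaining verifications ($\xi^2=\idem_1$, naturality, and the trivial unit identification) are routine. Consequently the proof of Proposition~\ref{prop:funSQ-mult} is essentially a matter of collecting the pieces: once the intertwiner property, the self-inverse relation $(\isoG_{U,V})^2=\id$, naturality, and the unit identification are in hand, multiplicativity follows by definition.

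\begin{proof}
The maps $\isoG_{U,V}$ are intertwiners of $\Q$-modules by the preceding Lemma, and hence are morphisms in $\repQ$. They are invertible since $(\isoG_{U,V})^2 = \id_{U\otimes V}$, as follows from $\xi^2=\idem_1$. Naturality in $U,V$ is clear from the explicit formula~\eqref{eq:isoG}, which involves only the actions of $\idem_1$ and $\xi=\xp+\xm$; these commute with any $\Salg$-module morphism. Finally, $\funSQ$ sends the monoidal unit $\oC^{1|0}\in\repS_0$ to the trivial module $\XX^{+}_1$, which is the monoidal unit of $\repQ$, giving the unit isomorphism $\phi_{\one}=\id$. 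As no coherence conditions are imposed on a multiplicative functor, this establishes the claim.
\end{proof}
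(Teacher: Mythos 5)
Your proof is correct and takes essentially the same route as the paper: it assembles the preceding intertwiner lemma with the observations that $(\isoG_{U,V})^2=\id$ (from $\xi^2=\idem_1$) and that naturality is immediate from the formula~\eqref{eq:isoG}, which is exactly how the paper concludes Proposition~\bref{prop:funSQ-mult}. Your only addition is to make explicit the unit isomorphism $\funSQ(\oC^{1|0})\cong\XX^{+}_{1}$, which the paper leaves implicit; this is a welcome completion of a routine detail rather than a different approach.
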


We conclude this section with a remark explaining how one may arrive at the expression \eqref{eq:isoG} for $\isoG_{U,V}$, and how it leads to the coproduct $\copS$ defined in the previous section.

\begin{Rem}\label{rem:origin-DeltaS}
Suppose we did not already know the coproduct $\copS$ and the isomorphisms $\isoG_{U,V}$. To find them, we start from the assumption that $\LL$ is group-like, $\copS(\LL)=\LL\tensor\LL$. Equivalently, we could assume that the tensor product on $\repS$ determined by $\copS$ is compatible with the $\oZ_2$-grading of $\repS$  in \eqref{eq:Z2-grading-repS}.

Given the definition of the functor $\funSQ$, we can now compare the $\K$-action on $\funSQ(U\svtensor V)$ and $\funSQ(U)\Qtensor\funSQ(V)$, as defined in \eqref{eq:funSQ}. This is done in the following table, where it is indicated whether $U,V$ are taken from $\repS_0$ or $\repS_1$, and where $u \in U$, $v \in V$:
\begin{equation*}
\raisebox{.7em}{
\begin{tabular}{ccccc}
 $U$ & $V$ & $\K.u$ & $\K.v$ & $\K.(u \otimes v)$
\\[.3em]
 $0$ & $0$ &
$\omega_U(u)$ & $\omega_V(v)$ & $\omega_{U \otimes V}(u \otimes v)$
\\[.3em]
 $0$ & $1$ &
$\omega_U(u)$ & $\rmi \,\omega_V(v)$ & $\rmi \,\omega_{U \otimes V}(u \otimes v)$
\\[.3em]
 $1$ & $0$ &
$\rmi \,\omega_U(u)$ & $\omega_V(v)$ & $\rmi \,\omega_{U \otimes V}(u \otimes v)$
\\[.3em]
 $1$ & $1$ &
$\rmi \,\omega_U(u)$ & $\rmi \,\omega_V(v)$ & $\omega_{U \otimes V}(u \otimes v)$
\end{tabular}}
\end{equation*}
Since we need $\isoG_{U,V}(\K.(u \otimes v)) = \K. \isoG_{U,V}(u \otimes v)$ (and since $\omega_{U \otimes V}(u\otimes v) = \omega_U(u) \otimes \omega_V(v)$ and $\K$ is group-like in $\Q$), we see that in all sectors but the last, we can simply take $\isoG_{U,V}(u \otimes v) = u \otimes v$. In the \textbf{11}-sector we need an extra minus sign, which suggests to take $\isoG_{U,V}$ to be the action of an odd element. One possibility (amongst many) is to choose $u \otimes v \mapsto u \otimes \xi.v$ with $\xi$ as in~\eqref{eq:isoG}.
This map is invertible since $\xi^2=\idem_1$ is the identity in $\Salg_1$.

Now that we have our ansatz for $\isoG_{U,V}$, we can ask for a coproduct on $\Salg$ such that $\isoG_{U,V}$ is a $\Q$-module intertwiner.
That is, we require commutativity of the diagram, for all $U,V\in\repS$,
\begin{equation}
\xymatrix@R=29pt@C=42pt{
&\Q\tensor \bigl(U\tensor_{\repS}V\bigr)\ar[d]_{\rho^{\Salg}}\ar[r]^{\id\tensor\isoG_{U,V}}
&\Q\tensor \bigl(U\Qtensor V\bigr)\ar[d]^{\rho^{\Q}}\\
&U\tensor_{\repS}V\ar[r]^{\isoG_{U,V}}&U\Qtensor V
}
\end{equation}
where $\rho^{\Salg}$ and $\rho^{\Q}$ are the action maps that use the coproducts $\copS$ and $\Delta$, respectively.
Note that the parity has to be taken into account while applying $\rho^{\Salg}$: $(a_1\tensor a_2).(u\tensor v) = (-1)^{|a_2||u|}a_1.u\tensor a_2.v$, for homogeneous $a_1, a_2\in\Salg$ and $u\in U$, $v\in V$,
	while no parity signs appear for $\rho^{\Q}$.
The diagram states that  for each $a \in \Q$, $u \in U$, $v \in V$ we require $\isoG_{U,V}(a.(u \otimes v)) = a.\isoG_{U,V}(u \otimes v)$. 
 For $a = \K^2$ (defining $\copS(\LL)$), this holds by construction. For $a = \ff^\pm$, one can choose $U=V=\Salg$, $u=v=\one$ and use $(\Gamma_{U,V})^2 = \id$ to get
\begin{equation}
	\copS(\xpm) = \Gamma_{\Salg,\Salg}\big(\Delta(\ff^\pm) \cdot \Gamma_{\Salg,\Salg}(\one \tensor \one)\big) \ ,
\end{equation}
which reduces to \eqref{copS}.
\end{Rem}

\section{An equivalence from $\catSF$ to $\repS$}\label{sec:SF-RepS}

\subsection{The functor $\funD$}\label{sec:funD}
We begin with defining an equivalence of $\oC$-linear abelian categories $\funD: \catSF \to \repS$.
	Recall the decomposition 
$\catSF=\catSF_0\oplus\catSF_1$ from~\eqref{eq:catSF-dec}
and  
$\repS = \repS_0 \oplus \repS_1$ from \eqref{eq:Z2-grading-repS}. 
The functor $\funD$  is defined differently in the two sectors. On $\catSF_0$ we take
\begin{equation}
	\funD_0: ~~\catSF_0 \to \repS_0 \ ,
 \quad \funD_0(U)=U \quad\text{where}\quad
\xp .u = a_1. u~,~~\xm. u = a_2. u \quad (u\in U) \ ,
\end{equation}
and $\funD_0(f)=f$ on morphisms.
Here we use the isomorphism $\xp\mapsto a_1$, $\xm \mapsto a_2$ of the Lie super algebras $\Salg_0$ and $\algGr$, see Section~\bref{sec:cat-C}.

Define the non-central idempotent
\begin{equation}\label{eq:Bel}
	\Bel := \xm \xp\idem_1 ~ \in  \Salg_1 \ .
\end{equation}
It generates a projective
$\Salg_1$-submodule $\B\subset\Salg_1$:
\begin{equation}\label{eq:B}
\B := \Salg\Bel = \Salg_1\Bel =
	\oC \,\xm \xp\idem_1 \, \oplus \, \oC\, \xp\idem_1 \ .
\end{equation}
We note that the image $\funSQ(\B)$ is the projective $\Q$-module  $\XX^{+}_{2}$ defined in Section~\bref{sec:repQ}.
The second component of $\funD$ is defined as
\begin{equation}\label{eq:fun-D1-def}
	\funD_1:~~ \catSF_1 \to \repS_1 \ , \quad \funD_1(U) = \B\svtensor U \ ,
\end{equation}
where the $\Salg_1$-action on $\funD_1(U)$ is the left $\Salg_1$-action on $\B$, and where $U\in\catSF_1=\svect$.
	On morphisms we set $\funD_1(f) = \id_\B \tensor f$.

\begin{Prop}\label{lem:D-equiv}
	The functor $\funD$ is an equivalence of $\oC$-linear categories.
\end{Prop}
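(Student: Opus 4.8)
The plan is to establish the equivalence sector by sector, using the direct-sum decompositions $\catSF=\catSF_0\oplus\catSF_1$ and $\repS=\repS_0\oplus\repS_1$ together with the fact that $\funD$ respects this grading. Since $\funD = \funD_0 \oplus \funD_1$ and the two sectors do not mix, it suffices to show that each of $\funD_0$ and $\funD_1$ is an equivalence of the corresponding $\oC$-linear abelian categories. For $\funD_0$ this is essentially immediate: the algebra isomorphism $\Salg_0 \cong \algGr$ given by $\xp \mapsto a_1$, $\xm \mapsto a_2$ identifies $\repS_0$ with $\repsv\algGr = \catSF_0$, and $\funD_0$ is precisely the functor induced by this isomorphism (the identity on underlying super-vector spaces, with the action transported along the isomorphism). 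Thus $\funD_0$ is an isomorphism of categories, in particular an equivalence.

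The substance of the proof is the sector $\funD_1 : \catSF_1 = \svect \to \repS_1$, $U \mapsto \B \svtensor U$. The plan is to exhibit a quasi-inverse. First I would record the structure of $\repS_1$: since $\Salg_1$ is a four-dimensional Clifford algebra (stated in Definition~\bref{dfn:algS}), it is a direct sum of matrix algebras over $\oC$, so $\repS_1$ is semisimple. I expect $\Salg_1$ to be isomorphic to $\mathrm{Mat}_{1|1}(\oC)$ (a single simple projective object, consistent with $\B$ being a projective $\Salg_1$-module of super-dimension matching $\XX^+_2 = \funSQ(\B)$ from Section~\bref{sec:repQ}), so that $\repS_1 \cong \svect$ as $\oC$-linear categories via the unique simple object. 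The key computation is then to show that tensoring with $\B$ realises this equivalence and that a quasi-inverse is given by an appropriate Hom- or idempotent-truncation functor. Concretely, I would define $\fun_1 : \repS_1 \to \svect$ by $\fun_1(M) = \Bel . M$ (or equivalently $\fun_1(M) = \Hom_{\Salg_1}(\B, M)$), the image of the non-central idempotent-like element $\Bel = \xm\xp\idem_1$ acting on $M$, and check that $\fun_1 \circ \funD_1 \cong \id_{\svect}$ and $\funD_1 \circ \fun_1 \cong \id_{\repS_1}$ naturally. The first isomorphism follows from computing $\Bel.(\B \svtensor U) = (\Bel.\B) \svtensor U$ and verifying $\Bel.\B \cong \oC^{1|0}$ as a super-vector space; the second uses projectivity of $\B$ together with semisimplicity of $\repS_1$ so that every object of $\repS_1$ is a sum of copies of the single simple, each recoverable as $\B \svtensor (\Bel.M)$.

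The main obstacle I anticipate is bookkeeping the super (parity) structure correctly throughout: $\B$ carries a $\oZ_2$-grading with $\xp\idem_1$ odd relative to $\xm\xp\idem_1$, so the identifications $\Bel.\B \cong \oC^{1|0}$ and the naturality squares must track parity signs from the braiding $\sflip$ of $\svect$. In particular I would verify that $\funD_1$ and $\fun_1$ send even morphisms to even morphisms and are full and faithful at the level of super-vector-space Hom's, so that the abelian-category structure (not merely the underlying $\oC$-linear one) is preserved. Provided the Clifford algebra $\Salg_1$ is as expected and $\B$ is its unique indecomposable projective, fullness and faithfulness reduce to the single identity that $\Bel.\B$ is one-dimensional and that $\Salg_1 \cong \B \oplus (\text{parity shift of } \B)$ as a left module — a short explicit check in the four-dimensional algebra $\Salg_1$. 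Once both sectors are handled, $\funD = \funD_0 \oplus \funD_1$ is an equivalence, completing the proof.
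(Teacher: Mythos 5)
Your proposal is correct and follows the same overall strategy as the paper: both split the problem along the $\oZ_2$-decomposition, dispose of sector $0$ via the algebra isomorphism $\Salg_0\cong\algGr$ (identical to the paper's $\funE$ on $\repS_0$), and prove the sector-$1$ statement by exhibiting an explicit quasi-inverse to $\funD_1 = \B\svtensor(-)$. The only genuine difference is how that quasi-inverse is packaged: the paper takes $\funE(V)=\B^*\tensor_{\Salg_1}V$ with $\B^*$ the dual right module, and verifies invertibility through the $(\Salg_1,\Salg_1)$-bimodule isomorphism $\Salg_1\cong\B\svtensor\B^*$ (whence $\B^*\tensor_{\Salg_1}\B\cong\oC^{1|0}$), a Morita-context computation that never invokes semisimplicity; you instead use the idempotent truncation $M\mapsto\Bel.M\cong\Hom_{\Salg_1}(\B,M)$ and lean on semisimplicity of the Clifford algebra $\Salg_1\cong\mathrm{Mat}_{1|1}(\oC)$. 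The two quasi-inverses are naturally isomorphic --- $\B^*\cong\Bel\Salg_1$ as right $\Salg_1$-modules, and $\Bel\Salg_1\tensor_{\Salg_1}M\cong\Bel.M$ --- so your route is legitimate and arguably more elementary (one idempotent computation, using $\Bel^2=\Bel$), at the price of the extra structural input about $\Salg_1$. One imprecision to repair: $\repS_1$ does \emph{not} have a unique simple object. Since morphisms in $\repS_1$ are parity-preserving, $\B=\Salg_1\Bel$ and its parity shift $\Salg_1\xp\xm\idem_1$ are two non-isomorphic simples (mirroring the two simples $\oC^{1|0}$ and $\oC^{0|1}$ of $\svect$, which $\funD_1$ sends to them respectively). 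This does not damage your argument, because the unit and counit you propose --- $\Bel.(\B\svtensor U)=(\Bel.\B)\svtensor U\cong U$, using $\Bel.\B=\oC\Bel\cong\oC^{1|0}$, and the evaluation $\B\svtensor\Bel.M\to M$, $b\tensor n\mapsto b.n$ --- are checked directly and treat both simples at once; just rephrase the semisimplicity step as ``every object of $\repS_1$ is a direct sum of copies of $\B$ and its parity shift,'' exactly the decomposition $\Salg_1=\Salg_1\Bel\oplus\Salg_1\xp\xm\idem_1$ you correctly identify at the end.
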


\begin{proof}
We will construct a functor $\funE:\repS\to\catSF$ which is inverse to $\funD$ separately in the two sectors.

Given $V\in\repS_0$, the $\algGr$-module $\funE(V)$ has $V$ as the underlying super-vector space with the action of $a_1$ and $a_2$ given by the action of $\xp$ and $\xm$, respectively. On morphisms $f$ in $\repS_0$ we set $\funE(f) = f$. 

For a given object $V\in\repS_1$, we set $\funE(V)=\B^*\tensor_{\Salg_1} V$ and $\B^*$ is the right $\Salg_1$-module with action
$f(\cdot)a=f(a\cdot)$, for any $\oC$-linear map $f: \B\to\oC$ and $a\in\Salg_1$. On morphisms $f$ in $\repS_1$ we set $\funE(f) = \id_{\B^*}\tensor_{\Salg_1} f$.

To see that the functor $\B^*\tensor_{\Salg_1} (-)$ 
is invertible, we need one more observation:
the algebra $\Salg_1$ is isomorphic to $\B\tensor_{\svect}\B^*$ as a $(\Salg_1,\Salg_1)$-bimodule. The isomorphism can be described explicitly on a basis of $\Salg_1$ as
\begin{equation*}
 \idem_1 \mapsto \Bel\tensor \Bel^* + \Cel\tensor \Cel^*,\quad
 \xp\idem_1 \mapsto \Cel\tensor \Bel^*,\quad
 \xm\idem_1 \mapsto \Bel\tensor \Cel^*,\quad
 \xp\xm\idem_1 \mapsto \Cel\tensor \Cel^*,
\end{equation*}
where $\Bel =  \xm \xp\idem_1$ and $\Cel := \xp \Bel = \xp \idem_1$ are a basis of the left module $\B$,
while  $\Bel^*$ and $\Cel^*$ give the corresponding dual basis  in the right module $\B^*$.
We  have 
  $\om(\Bel^*)=\Bel^*$
 and $\Bel^*. \xp = 0$, $\Bel^*. \xm = \Cel^*$, etc.
It follows from this isomorphism that, conversely, 
$\B^*\tensor_{\Salg_1}\B\cong\oC^{1|0}$. 

It is now clear that the compositions $\funE\circ\funD$ and $\funD\circ\funE$ are naturally isomorphic
to the identity functors $\one_{\catSF}$ and $\one_{\repS}$, respectively.
\end{proof}

\subsection{$\funD$ as a multiplicative functor}

Below we will make an ansatz for isomorphisms
\begin{equation}\label{eq:isoD-source-target}
\isoD_{U,V}: \funD(U\Ctensor V) \to D(U)\Stensor D(V) \ .
\end{equation}
We do not know a good motivation for this ansatz. It was found by starting
from a  more general ansatz for $\isoD_{U,V}$ with free parameters
 and then using computer algebra
to fix a solution which
transports the braiding in $\catSF_0$ to the standard R-matrix for $\Q_0$ (see Section~\bref{sec:trans-braid} and Remark~\bref{rem:trans-braid-0}) and
makes the associator of $\catSF$ look as simple as possible after it is transported to $\repS$.

\begin{figure}[bt]
\begin{center}
\begin{tabular}{l@{\hspace{6em}}l}
\raisebox{4.5em}{\bf 00} \quad
\raisebox{-0.5\height}{\includegraphics[scale=0.3]{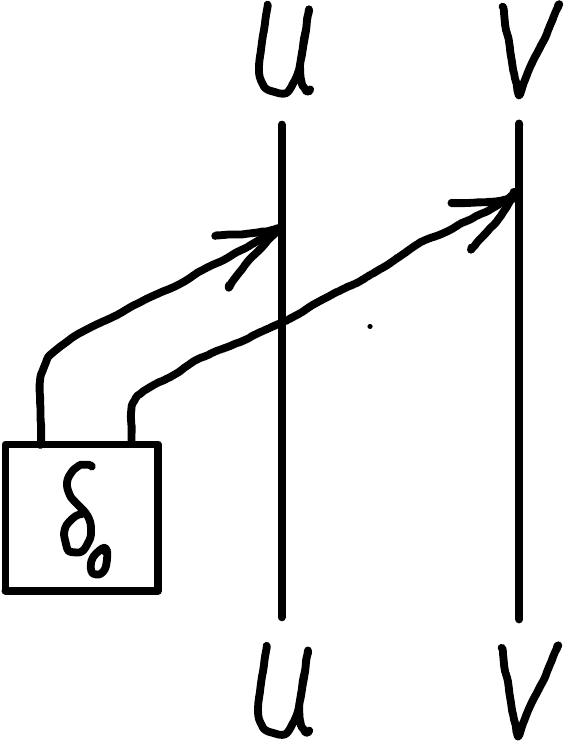}}
&
\raisebox{4.5em}{\bf 01} \quad
\raisebox{-0.5\height}{\includegraphics[scale=0.3]{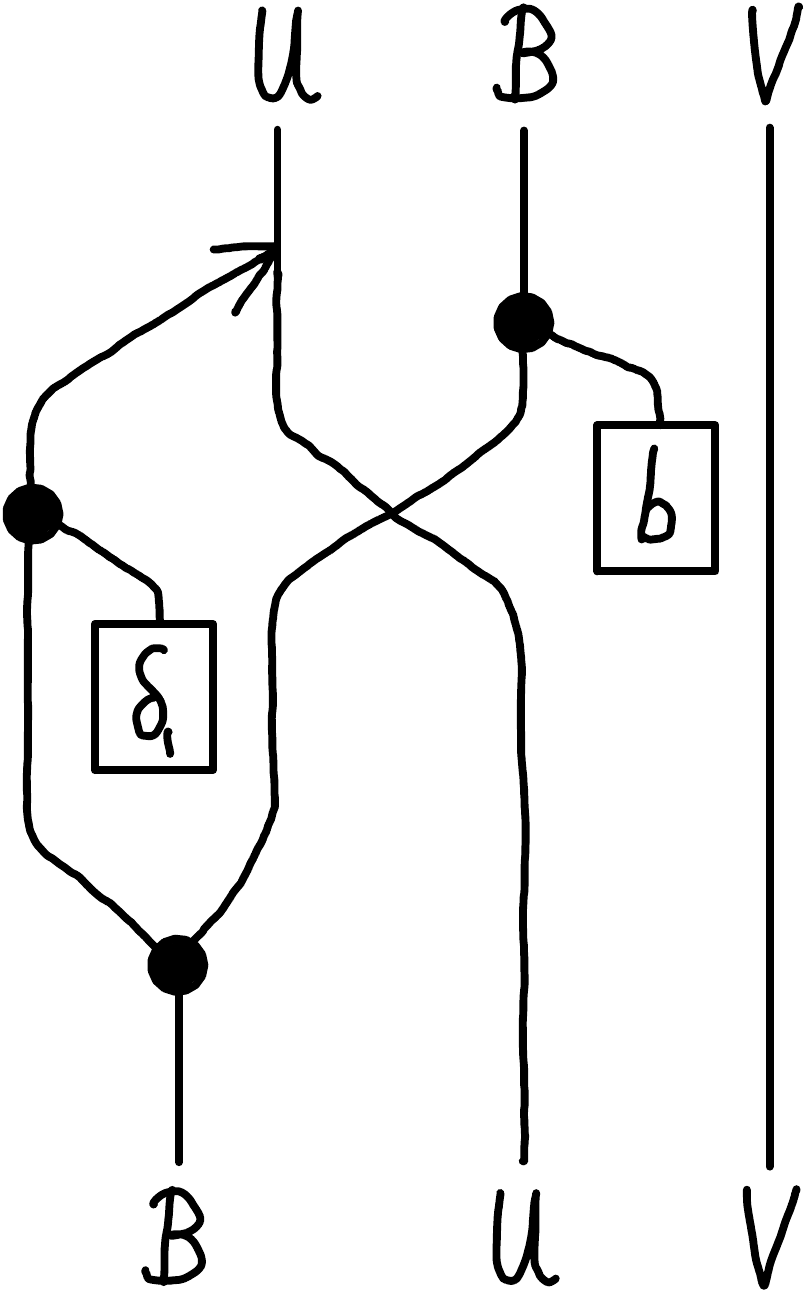}}
\\[6em]
\raisebox{5.5em}{\bf 10} \quad
\raisebox{-0.5\height}{\includegraphics[scale=0.3]{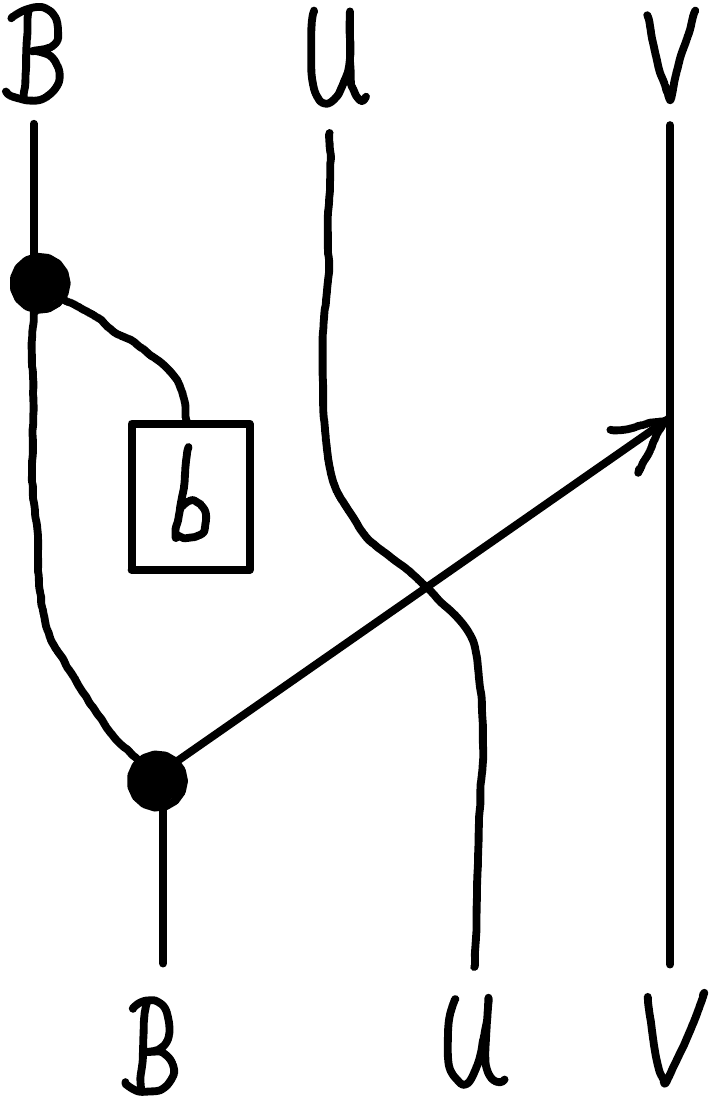}}
&
\raisebox{5.5em}{\bf 11} \quad
\raisebox{-0.5\height}{\includegraphics[scale=0.3]{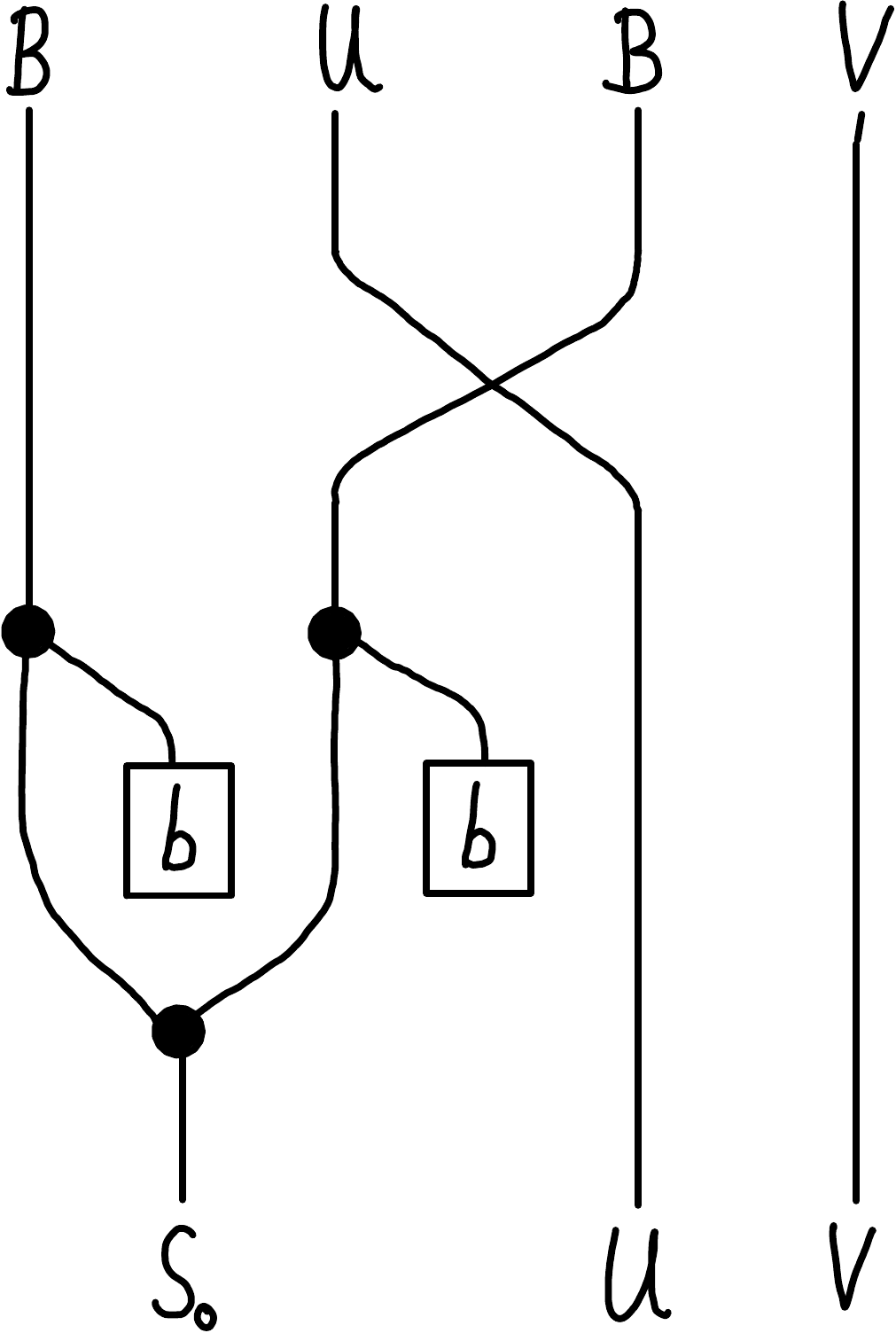}}
\end{tabular}
\end{center}
\caption{String diagram notation for $\isoD_{U,V}$ in the four sectors. Diagrams are read from bottom to top. Since in this paper string diagrams
are only used occasionally to make some formulas more readable,
we do not detail our conventions, but instead refer to \cite[Sect.\,2.1]{Davydov:2012xg}.}
\label{fig:Delta_UV}
\end{figure}

The resulting ansatz is defined sector by sector. Before we give it, we need a bit of notation. Firstly, the underlines in the source and target for $\isoD_{U,V}$ below indicate on which tensor factors $\Salg$ acts. For example, $\underline U \tensor \underline \B \tensor V$ means $\Salg$ acts (via the coproduct) on $U$ and $\B$, but not on $V$. Next, $\mu^\Salg$ stands for the multiplication $\Salg \tensor \Salg \to \Salg$, and for an $\Salg$-module $U$ we denote the action map by $\rho^U : \Salg \tensor U \to U$. We will also need the constants
\begin{equation}\label{del0}
	\del_{0}=\one\tensor\one + \xm\tensor\xp \in \Salg \tensor \Salg
\quad , \quad
\del_{1} = \one + \xp\xm \in \Salg \ .
\end{equation}
Note that they are multiplicatively invertible with inverses
\begin{equation}
	\del_{0}^{-1}=\one\tensor\one - \xm\tensor\xp \in \Salg \tensor \Salg
\quad , \quad
\del_{1}^{-1} = \one - \xp\xm(\idem_0+\tfrac12 \idem_1) \in \Salg \ .
\end{equation}
The right multiplication by $a \in \Salg_0$ will be denoted by
\begin{equation}
R_a : \Salg \to \Salg \quad , \quad R_a = \mu^\Salg \circ (\id_\Salg \tensor a) \ .
\end{equation}
Since $a$ is parity-even, this is indeed a morphism in $\svect$, and, since the multiplication is on the right, it is also a morphism in $\repS$. Note that
the image of $R_\Bel$, with $\Bel$ as in \eqref{eq:Bel}, is precisely $\B$, and we will use $R_\Bel$ to project from $\Salg$ to $\B$ in a way compatible with the $\Salg$-action.

In Figure \ref{fig:Delta_UV} we express the formulas for $\isoD_{U,V}$ given below in terms of string diagrams.
\begin{itemize}

\item \textbf{00} sector: $\isoD_{U,V}: \underline U\tensor \underline V\to \underline U \tensor \underline V$ is given by
\begin{equation}\label{eq:isoD00}
\isoD_{U,V} = \big(\rho^U \tensor \rho^V\big) \circ \big(\id_{\Salg} \tensor \sflip_{\Salg,U} \tensor \id_V\big) \circ \big(\delta_0 \tensor \id_U \tensor \id_V\big) \ ,
\end{equation}
where $\sflip$ is defined in \eqref{sflip}.
\item \textbf{01} sector: $\isoD_{U,V}: \underline\B\tensor U\tensor V\to \underline U \tensor \underline\B\tensor V$ is given by
\begin{equation}\label{eq:isoD01}
\isoD_{U,V} =
\big(\rho^U \tensor R_\Bel \tensor \id_V \big) \circ
 \big(R_{\delta_1} \tensor \sflip_{\Salg,U} \tensor \id_V\big)
\circ \big( \copS \tensor \id_U \tensor \id_V \big) \ .
\end{equation}
Since $\B \subset \Salg$, it makes sense to apply $\copS$ to elements in $\B$. The right multiplication with $\Bel$ projects from $\Salg$ to $\B$, so that the target is indeed $U \tensor \B \tensor V$.

\item \textbf{10} sector: $\isoD_{U,V}: \underline\B\tensor U\tensor V\to \underline\B \tensor U\tensor \underline V$  is given by
\begin{equation}\label{eq:isoD10}
\isoD_{U,V} =
\big(R_\Bel \tensor \id_U \tensor \rho^V \big) \circ
 \big(\id_\Salg \tensor \sflip_{\Salg,U} \tensor \id_V\big)
\circ \big( \copS \tensor \id_U \tensor \id_V \big) \ .
\end{equation}

\item \textbf{11} sector: $\isoD_{U,V}: \underline{\Salg_0}\tensor U\tensor V\to \underline\B\tensor U \tensor \underline\B\tensor V$ is given by
\begin{equation}\label{eq:isoD11}
\isoD_{U,V} =
 \big(\id_\B \tensor \sflip_{\B,U} \tensor \id_V\big)
\circ \big( [(R_\Bel \tensor R_\Bel) \circ \copS] \tensor \id_U \tensor \id_V \big) \ .
\end{equation}
Here, in the source $\Salg$-module we have identified $\Salg_0$ and $\algGr$.
\end{itemize}

\begin{Lemma}
The linear maps $\isoD_{U,V}$ are intertwiners of $\Salg$-modules.
\end{Lemma}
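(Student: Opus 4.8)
I will prove that $\isoD_{U,V}(a.w) = a.\isoD_{U,V}(w)$ for all $a\in\Salg$ and all $w$ in the source. Since the source and target are honest $\Salg$-modules, the set of $a$ for which this identity holds is a linear subspace that is closed under multiplication (from $\isoD_{U,V}((ab).w)=\isoD_{U,V}(a.(b.w))=a.\isoD_{U,V}(b.w)=(ab).\isoD_{U,V}(w)$) and contains $\one$; hence it suffices to verify the identity on the algebra generators $\xp$, $\xm$, $\LL$. Rather than computing blindly, the plan is to exhibit each $\isoD_{U,V}$ as a composite of maps that are \emph{individually} morphisms of $\Salg$-modules in $\svect$ for suitably chosen intermediate actions, so that the source action is transported to the target action along the composite.

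The building blocks are the following. First, by Lemma~\bref{lem:DeltaS-alg-map} the coproduct $\copS$ is an algebra homomorphism, so $\copS:\Salg\to\Salg\svtensor\Salg$ is a morphism of left $\Salg$-modules from the regular module to the tensor-product module $\Salg\Stensor\Salg$ (this is exactly $\copS(ab)=\copS(a)\copS(b)$). Second, for parity-even $a$ the right multiplication $R_a$ commutes with every left multiplication and preserves parity, hence is an endomorphism of left $\Salg$-modules in $\svect$; this applies to $R_{\del_1}$ and $R_{\Bel}$, since $\del_1$ from~\eqref{del0} and $\Bel$ from~\eqref{eq:Bel} are parity-even, and $R_{\Bel}$ moreover projects $\Salg$ onto $\B$ compatibly with the action (cf.\ \eqref{eq:B}). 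Third, the action maps $\rho^U,\rho^V$ transfer the $\Salg$-action from a tensor factor onto the module by associativity, $\rho^U\big((a s)\tensor u\big)=a.\rho^U(s\tensor u)$, so they intertwine left multiplication on an $\Salg$-factor with the module action on $U$ (resp.\ $V$). Finally $\sflip$ is the symmetric braiding of $\svect$; being natural, it is a module isomorphism carrying the action along the factor it moves, while spectator factors carry the trivial action.

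For the sectors \textbf{01}, \textbf{10}, \textbf{11} these observations suffice: reading~\eqref{eq:isoD01}, \eqref{eq:isoD10} and~\eqref{eq:isoD11} from right to left, the first arrow applies $\copS$ to the $\B$-factor(s), turning the left-regular action on $\B$ (which is the $\Salg$-action on $\funD(U\Ctensor V)$, resp.\ on $\Salg_0$ in the \textbf{11}-sector) into the $\copS$-tensor action; the middle arrows ($R_{\del_1}$, $R_{\Bel}$ and the flips $\sflip_{\Salg,U}$, $\sflip_{\B,U}$) preserve this action while redistributing the tensor factors; and the final action maps $\rho^U$, $\rho^V$ (absent in the \textbf{11}-sector) transfer one leg of $\copS(a)$ onto $U$ or $V$. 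One then checks that the resulting action on the last space agrees with the $\Stensor$-action on $\funD(U)\Stensor\funD(V)$. With these identifications the composite is a module map. The \textbf{00}-sector is slightly different: by~\eqref{eq:isoD00} the map $\isoD_{U,V}$ is simply the action of $\del_0=\one\tensor\one+\xm\tensor\xp$ distributed onto $U$ and $V$, while source and target both carry the same diagonal action (the coproduct~\eqref{copS} restricted to $\Salg_0$ is primitive on $\repS_0$). Here the intertwining property is equivalent to $\del_0$ commuting with $\copS(a)$ as operators on objects of $\repS_0$, which I would verify on $\xp,\xm$ using $(\xpm)^2=0$ and the fact that $\{\xp,\xm\}=\idem_1$ acts as $0$ on $\repS_0$.

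I expect the main obstacle to be the careful bookkeeping of the Koszul signs of $\svect$: one must confirm that the sign produced when $\sflip$ moves an odd leg of $\copS(a)$ past a factor matches exactly the sign appearing in the $\Stensor$-action on the target, and that the intermediate module structures glue consistently across the whole composite. This is most delicate in the \textbf{11}-sector, where $\copS(a)$ is split onto both $\B$-legs and two projections $R_{\Bel}$ intervene before the flip; there a direct component-wise verification (optionally supported by computer algebra) is the safest way to close the argument once the structural skeleton above has reduced it to a finite sign check.
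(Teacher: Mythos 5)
Your proof is correct and follows essentially the same route as the paper: sectors \textbf{01}, \textbf{10}, \textbf{11} are handled structurally from $\copS$ being an algebra map together with $R_{\delta_1}$ and $R_{\Bel}$ being left-module intertwiners (plus naturality of $\sflip$), while sector \textbf{00} reduces to the commutation identity $(\idem_0\tensor\idem_0)\cdot\copS(a)\cdot\delta_0=(\idem_0\tensor\idem_0)\cdot\delta_0\cdot\copS(a)$, verified on the generators $\LL$, $\xpm$ using that $\idem_1$ acts as zero on objects of $\repS_0$. Your extra caution about a residual sign check in the \textbf{11}-sector is harmless but unnecessary, since the structural argument already accounts for the Koszul signs (all elements being right-multiplied are parity-even).
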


\begin{proof}
In sector {\bf 00}, we have to verify that for all $a \in \Salg$ we have $(\idem_0 \tensor \idem_0) \cdot \copS(a) \cdot  \delta_0 = (\idem_0 \tensor \idem_0)\cdot \delta_0 \cdot \copS(a)$. Here the idempotents $\idem_0$ appear because $\Salg \tensor \Salg$ acts on two representations from $\repS_0$ (without the factor $\idem_0 \tensor \idem_0$ the identity is false).
For $a=\LL$, the above identity is clear, and for $a=\xpm$, it follows from  a short calculation.

In the other three sectors, the intertwiner property is even more direct: its follows since $\copS$ is an algebra map, and since the right-multiplications $R_{\delta_1}$ and $R_\Bel$ are left-module intertwiners. (For those versed in string diagrams, the intertwining property will be obvious from Figure~\ref{fig:Delta_UV}.)
\end{proof}

\begin{Lemma}
The $\isoD_{U,V}$ are isomorphisms.
\end{Lemma}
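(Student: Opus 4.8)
The plan is to treat the four sectors separately, after one general reduction. Since $\funD$ is an equivalence (Proposition~\bref{lem:D-equiv}) and, on underlying (super-)vector spaces, every tensor product occurring in the source and target of $\isoD_{U,V}$ is an ordinary tensor product over $\oC$ (possibly with one extra fixed factor $\B$ or $\algGr$), a short dimension count shows that in each sector the source $\funD(U\Ctensor V)$ and the target $\funD(U)\Stensor\funD(V)$ have the same finite dimension: $\funD_0$ preserves dimension while $\funD_1$ doubles it, and the two tensor products match these factors up. Hence it suffices to prove that each $\isoD_{U,V}$ is injective, or equivalently to exhibit a one-sided inverse.

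In the \textbf{00} sector, $\isoD_{U,V}$ is simply the action of $\del_0=\one\tensor\one+\xm\tensor\xp$ on $U\tensor V$. As $\del_0$ is multiplicatively invertible with inverse $\del_0^{-1}=\one\tensor\one-\xm\tensor\xp$, the inverse of $\isoD_{U,V}$ is the action of $\del_0^{-1}$, and this sector is settled for all $U,V$.

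For the remaining sectors I would first strip off the manifestly invertible building blocks: in all three cases $\isoD_{U,V}$ is composed with a graded transposition $\sflip$, and in sector \textbf{01} additionally with the right multiplication $R_{\del_1}$ by the invertible element $\del_1$; these can be discarded. The spectator factor ($V$ in sectors \textbf{01}, \textbf{11}, and $U$ threaded only through $\sflip$ in sector \textbf{10}) enters solely via identities and $\sflip$. Thus, using naturality of $\isoD_{U,V}$ in each variable together with additivity -- and, in the $\catSF_0=\repsv\algGr$ variable, exactness of both functors $U\mapsto\funD(U\Ctensor V)$ and $U\mapsto\funD(U)\Stensor\funD(V)$ applied to a projective presentation by (parity shifts of) the regular module $\algGr$ -- it is enough to verify invertibility on the generating objects: $\algGr$ for $\catSF_0$, and $\oC^{1|0},\oC^{0|1}$ for $\catSF_1=\svect$. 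On these objects $\isoD_{U,V}$ becomes an explicit linear map between finite-dimensional spaces whose invertibility is a direct check.

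The decisive case is sector \textbf{11}, where $\isoD_{U,V}=(\id_\B\tensor\sflip_{\B,U}\tensor\id_V)\circ\bigl([(R_\Bel\tensor R_\Bel)\circ\copS]\tensor\id_U\tensor\id_V\bigr)$. After dropping the invertible $\sflip$ and the spectator identities, the entire claim reduces to the single, $U,V$-independent statement that
\[
(R_\Bel\tensor R_\Bel)\circ\copS:\ \Salg_0\longrightarrow\B\tensor\B
\]
is a linear isomorphism of four-dimensional spaces. This is where the non-coassociative coproduct $\copS$ and the explicit form of $\Bel=\xm\xp\idem_1$ genuinely enter: since $\idem_0,\idem_1$ are orthogonal central idempotents and $R_\Bel$ annihilates $\idem_0\Salg$, only the $\idem_1\tensor\idem_1$ component of $\copS$ survives, and one must check that the surviving map is nondegenerate. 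I expect this finite computation -- comparing $\copS$ on the basis $\idem_0,\xp\idem_0,\xm\idem_0,\xp\xm\idem_0$ of $\Salg_0$ against the basis $\Bel,\xp\idem_1$ of $\B$ -- to be the main obstacle, and, as elsewhere in the paper, it is naturally carried out by a short direct or computer-algebra calculation. The analogous finite checks in sectors \textbf{01} and \textbf{10} are of the same but simpler nature.
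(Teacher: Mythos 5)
Your proposal is correct in substance, and in sectors \textbf{00} and \textbf{11} it converges on exactly the paper's argument: sector \textbf{00} via invertibility of $\del_0$, and sector \textbf{11} via the reduction (after peeling off $\sflip$ and the spectator identities) to the single claim that $(R_\Bel\tensor R_\Bel)\circ\copS : \Salg_0 \to \B\svtensor\B$ is a bijection of four-dimensional spaces. The paper verifies precisely this by computing the images of the basis $\idem_0,\xp\idem_0,\xm\idem_0,\xm\xp\idem_0$ under right multiplication of $\copS(-)$ by $\Bel\tensor\Bel$ (using $\copS(\xpm)\cdot\idem_1\tensor\idem_1=(\xpm\tensor\one-\rmi\,\one\tensor\xmp)\cdot\idem_1\tensor\idem_1$), obtaining $\Cel\tensor\Bel$, $\Bel\tensor\Bel$, $-\rmi\,\Bel\tensor\Cel$, $\Bel\tensor\Bel+\rmi\,\Cel\tensor\Cel$, which span; so the finite check you defer does succeed. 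Where you genuinely diverge is in sectors \textbf{01} and \textbf{10}: the paper exhibits explicit inverse maps built from the antipode, verifying $\isoD^{-1}_{U,V}\circ\isoD_{U,V}=\id$ via the identity $(\Bel\otimes\one)\cdot\big((\id\otimes S)\circ\copS(\Bel)\big)\cdot(\Bel\otimes\one)=\Bel\otimes(\del_1\idem_0)$ together with $\mu^\Salg\circ(S\otimes\id)\circ\copS=\one\circ\eps$ and coassociativity of $\copS$ in the relevant sector (where $\Sas$ is trivial), an argument uniform in $U,V$; you instead combine the dimension count with naturality, additivity, and exactness of both functors to reduce to projective generators ($\algGr$ and its parity shift for the $\repsv\algGr$ variable, $\oC^{1|0},\oC^{0|1}$ for the $\svect$ variable) and then finite matrix checks. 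That homological reduction is sound (a single projective cover plus the commuting square already gives surjectivity, and equal dimensions finish the job), and it trades the paper's structural inverse formulas for elementary linear algebra; the cost is that it yields no closed-form inverse and leaves several finite verifications unperformed. Two small cautions: you cannot literally ``discard'' the invertible middle factor $R_{\del_1}\tensor\sflip_{\Salg,U}\tensor\id_V$ from the composition defining $\isoD_{U,V}$ in sector \textbf{01} --- it can only be absorbed into the adjacent maps, since invertibility of $A\circ B\circ C$ is not equivalent to that of $A\circ C$ --- though this does not affect your actual plan of checking the full composite on generators; and your phrase ``it suffices to prove injectivity'' should read surjectivity where you invoke the projective-presentation argument, which naturally produces surjectivity (again harmless, given the dimension equality). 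Overall: a valid alternative blueprint whose decisive computations are the same as the paper's but are asserted rather than carried out.
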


\begin{proof}
We proceed sector by sector.
Since $\delta_0$ has a multiplicative inverse, it is clear that $\isoD_{U,V}$ is invertible in sector {\bf 00}. In sectors {\bf 01} and {\bf 10} the inverse maps are given by, in string diagram notation:
\begin{equation*}
\raisebox{4.5em}{\bf 01} \quad
\raisebox{-0.5\height}{\includegraphics[scale=0.3]{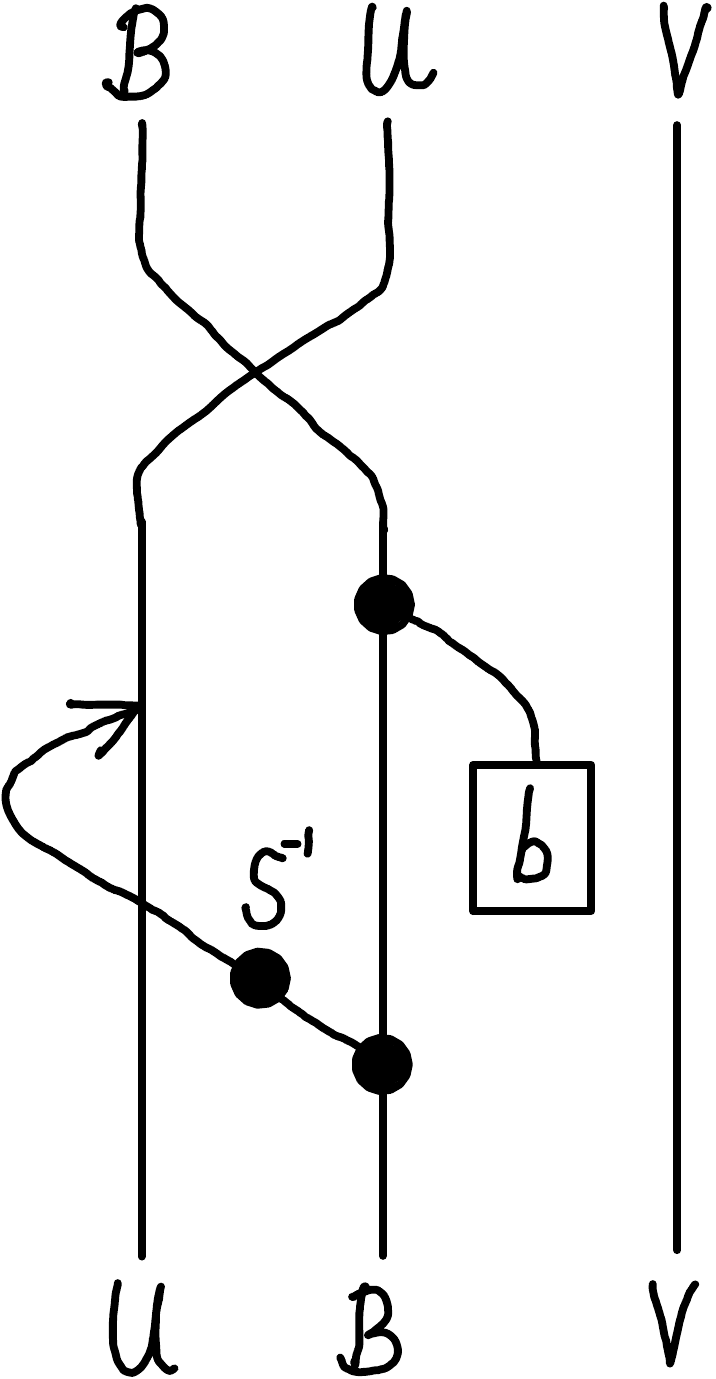}}
\qquad \qquad
\raisebox{4.5em}{\bf 10} \qquad
\raisebox{-0.5\height}{\includegraphics[scale=0.3]{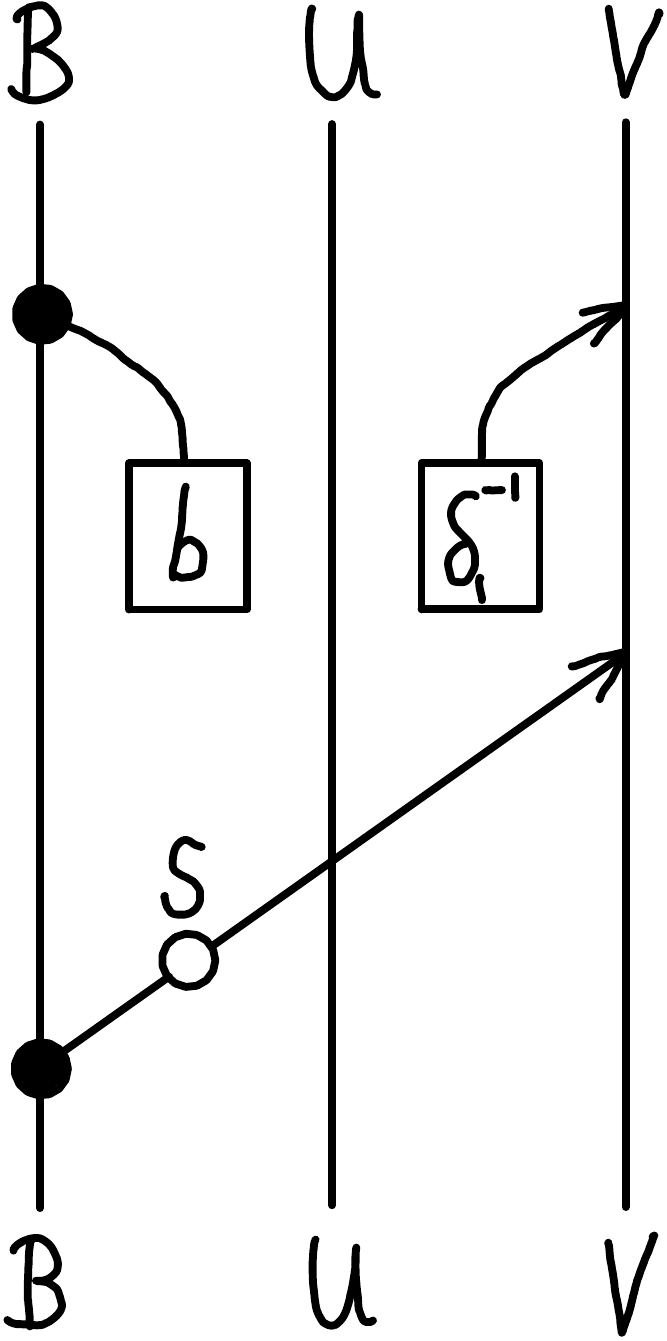}}
\end{equation*}
For example, in sector {\bf 10}, compute $\isoD^{-1}_{UV} \circ \isoD_{UV}$ by first moving the $\Bel$ in $\isoD_{UV}$ past the coproduct in $\isoD^{-1}_{UV}$. Then use the identity 
$(\Bel \otimes \one) \cdot ( (\id \otimes S) \circ \isoD(\Bel)) \cdot (\Bel \otimes \one) = \Bel \otimes (\delta_1 \idem_0)$
and $\mu^\Salg \circ (S \otimes \id) \circ \copS = \one \circ \eps$ (see \cite[Lem.\,2.3]{Davydov:2012xg}). 
In this calculation, one needs coassociativity of $\copS$, which holds here as we are using it only in the sector $S_1 \to S_1 \otimes S_0 \otimes S_0$, where $\Sas$ is the identity.
Since the underlying vector spaces are finite-dimensional, it is enough to check $\isoD^{-1}_{UV} \circ \isoD_{UV} = \id$. The computation in sector ${\bf 01}$ is similar. 

In sector {\bf 11}, we establish that the map $\isoD_{U,V}$  is surjective by a direct computation.
As in the proof of Proposition \bref{lem:D-equiv}, we first fix a basis in $\B$ as  $\B=\oC \Bel\oplus\oC\Cel$ and $\Cel=\xp.\Bel$. Next, note the equation 
\begin{equation}
\copS(\xpm) \cdot \idem_1\tensor\idem_1 = (\xpm\tensor\one - \rmi \one\tensor\xmp) \cdot \idem_1\tensor\idem_1 \ ,
\end{equation}
which allows one  to compute  
 the image $\copS(\Salg_0) \cdot \Bel\tensor\Bel$ easily:
\begin{align*}
\copS(\xp\idem_0) \cdot \Bel\tensor\Bel &= \Cel\tensor\Bel,\quad
&\copS(\idem_0) \cdot \Bel\tensor\Bel &= \Bel\tensor\Bel,\\
\copS(\xm\idem_0) \cdot \Bel\tensor\Bel &=-\rmi  \Bel\tensor\Cel, \quad
&\copS(\xm\xp\idem_0) \cdot \Bel\tensor\Bel &= \Bel\tensor\Bel + \rmi\Cel\tensor\Cel.
\end{align*}
This image is the  super vector space $\B\tensor_{\svect}\B$. Since the dimension of  $\Salg_0$ and $\B\tensor_{\svect}\B$ is four, surjectivity of $\isoD_{U,V}$ implies that it is an isomorphism. 

We thus have that $\isoD_{U,V}$ is an isomorphism in all four sectors.
\end{proof}

That $\isoD_{U,V}$ is natural in $U$ and $V$ is immediate. Thus, together with the two previous lemmas we see that \eqref{eq:isoD-source-target} is a natural family of isomorphisms, thus proving the following proposition.

\begin{Prop}
	With the isomorphisms $\isoD_{U,V}$ as in~\eqref{eq:isoD00}--\eqref{eq:isoD11}, the functor $\funD: \catSF \to \repS$ is multiplicative.
\end{Prop}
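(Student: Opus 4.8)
The plan is to check the three items in the definition of a multiplicative functor, namely that each $\isoD_{U,V}$ is a morphism in $\repS$, that it is invertible, and that the family is natural in both arguments, and then to exhibit the unit isomorphism $\phi_{\one}:\funD(\one)\to\one$. Crucially, a multiplicative functor carries no coherence (hexagon, pentagon or unit) conditions, so once these data are in place the proof is complete; all of the genuine work already sits in the two preceding lemmas. The first item is exactly the statement that the $\isoD_{U,V}$ intertwine the $\Salg$-actions, and the second is the sector-by-sector invertibility: in sector \textbf{00} this uses the multiplicative invertibility of $\delta_0$, in sectors \textbf{01} and \textbf{10} the explicit inverses built from the antipode and counit axioms, and in sector \textbf{11} the direct surjectivity computation together with a dimension count. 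I would simply invoke these two lemmas.

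For naturality I would argue uniformly. Each $\isoD_{U,V}$ is assembled from a single fixed element of $\Salg^{\tensor k}$ acting through $\copS$, together with the structural maps $\rho^{U}$, $\rho^{V}$, $\sflip$, $R_{\Bel}$ and $R_{\delta_1}$, every one of which is natural in the module arguments. Thus for morphisms $f:U\to U'$ and $g:V\to V'$ the naturality square commutes because module maps commute with actions, with $\sflip$, and with the right multiplications $R_{\Bel}$, $R_{\delta_1}$. In the first three sectors $\funD(f*g)$ is, under the relevant identifications of $\catSF_1$ with $\svect$ and after prepending $\B$, just $f\tensor g$ or $\id_{\B}\tensor f\tensor g$, so nothing special happens.

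The one case that I would treat with slightly more care is the \textbf{11} sector, because there the tensor product acts on morphisms by $f*g=\id_{\algGr}\tensor f\tensor g$ rather than $f\tensor g$. Here $\funD_{0}(f*g)=\id_{\algGr}\tensor f\tensor g$ while the target morphism is $\funD_{1}(f)\Stensor\funD_{1}(g)=(\id_{\B}\tensor f)\Stensor(\id_{\B}\tensor g)$, and one checks $\isoD_{U',V'}\circ\funD_{0}(f*g)=(\funD_{1}(f)\Stensor\funD_{1}(g))\circ\isoD_{U,V}$ using that $(R_{\Bel}\tensor R_{\Bel})\circ\copS$ and $\sflip_{\B,U}$ commute with the $\svect$-morphisms $f,g$. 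This is where I expect the only real bookkeeping obstacle to lie, all other sectors being immediate.

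Finally, for the unit I would identify $\funD(\one)$. The tensor unit of $\catSF$ lies in $\catSF_{0}=\repsv\algGr$ and is $\oC^{1|0}$ with $a_{1},a_{2}$ acting by zero. By the definition of $\funD_{0}$ its image is the $\Salg_{0}$-module $\oC^{1|0}$ with $\xpm$ acting by zero and $\LL$ by $+1$, which is precisely the tensor unit of $\repS$ determined by $\copS$ and $\epsilon$. Hence one may take $\phi_{\one}=\id:\funD(\one)\to\one$. Together with the two lemmas and the naturality above, this supplies all the data of a multiplicative functor and proves the proposition.
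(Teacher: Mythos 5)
Your proof is correct and follows essentially the same route as the paper: the paper likewise reduces the proposition to the two preceding lemmas (the intertwiner property and the invertibility of $\isoD_{U,V}$) and simply declares naturality in $U$ and $V$ immediate. Your sector-by-sector naturality check (including the \textbf{11} sector, where $f*g=\id_{\algGr}\tensor f\tensor g$) and your explicit identification of $\funD(\one)$ with the tensor unit of $\repS$, giving $\phi_{\one}=\id$, merely spell out details the paper leaves implicit.
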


\section{Transport of associator, braiding and ribbon twist from $\catSF$ to $\repQ$}\label{sec:proof}

\newcommand{\tensC}{\tensor_{\cat}}
\newcommand{\assC}{\alpha^{\cat}}
\newcommand{\tensD}{\tensor_{\catD}}
\newcommand{\assD}{\alpha^{\catD}}

\subsection{Transporting associators along multiplicative functors}\label{sec:trans-assoc}

We now describe how to transport associators along multiplicative equivalences. Let $\cat$ be a monoidal category with a tensor-product functor $\tensC$ and an associator $\assC$, and $\catD$ a category  with a tensor-product functor $\tensD$ which is not equipped with an associator. Consider a multiplicative equivalence $\fun : \cat \to \catD$ with family of isomorphism $\{\Theta_{U,V}: \fun(U\tensC V)~\to~\fun(U)~\tensD \fun(V)\}$. We now seek natural isomorphisms $\alpha_{X,Y,Z}^\catD$, for $X,Y,Z \in \catD$, such that for all $U,V,W \in \cat$, the diagram
\begin{equation}\label{eq:transport-assoc-diag}
\xymatrix@R=32pt@C=72pt{
\fun \bigl(U\tensC (V\tensC W)\bigr)\ar[d]_{\Theta_{U,V\tensC W}}\ar[r]^{\fun(\assC_{U,V,W})}
&\fun \bigl((U\tensC V)\tensC W\bigr)\ar[d]^{\Theta_{U\tensC V,W}}\\
\fun (U)\tensD \fun(V\tensC W)\ar[d]^{\id\tensor\Theta_{V,W}}
&\fun (U\tensC V)\tensD \fun(W)\ar[d]^{\Theta_{U,V}\tensor\id}\\
\fun (U)\tensD\bigl(\fun(V)\tensD\fun(W)\bigr)\ar[r]^{\assD_{\fun(U),\fun(V),\fun(W)}}
&\bigl(\fun (U)\tensD \fun(V)\bigr)\tensD\fun(W)
}
\end{equation}
commutes. Such an $\alpha^\catD$ exists (use the functor inverse to $\fun$ to define it), is unique (since $\alpha^\catD$ is natural and $\fun$ is essentially surjective) and automatically satisfies the pentagon condition (as $\alpha^\cat$ does and the $\Theta_{U,V}$ are isomorphisms). The unit isomorphisms can be transported in the same way, turning $\catD$ into a monoidal category. By construction, for this monoidal structure on $\catD$, the multiplicative equivalence $\fun$ becomes monoidal. 

\medskip

Starting from the monoidal category $\catSF$, we use the above diagram to transport the monoidal structure from $\catSF$ to $\repS$ via $\funD$, and then further to $\repQ$ via $\funSQ$.

\subsection{Transporting the associator from $\catSF$ to $\repS$}\label{rem:Lambda-from-SF}
The coassociator $\Sas$ for $\Salg$ given in Section \bref{sec:S} was in fact computed via the method described above.
That is, the associator $\assocS$ in $\repS$ was computed from the multiplicative functor $\funD$ using 
the family $\isoD_{U,V}$ defined in~\eqref{eq:isoD00}-\eqref{eq:isoD11} and the associator $\alpha^{\catSF}_{U,V,W}$ of $\catSF$ as given in Section~\bref{sec:assoc-SF}.
	It turns out\footnote{In the category of vector spaces this would be automatic, but in super-vector spaces, the associator could involve in addition the parity involution $\omega$.} that the associator $\alpha^{\catSF}_{U,V,W}$ can be defined via the action of a (necessarily even)  element $\Sas \in \Salg \tensor \Salg \tensor \Salg$,
\begin{equation}\label{eq:assocS}
 \Sas. (u\tensor v\tensor w) = \assocS(u\tensor v\tensor w)  \ .
\end{equation}
To determine $\Sas$ (or to verify the expression in \eqref{eq:Sas}, depending on the point of view), we evaluate the diagram \eqref{eq:transport-assoc-diag} in each of the eight sectors in turn.
That is, we write
\begin{equation}
\Sas = \sum_{a,b,c \in \{0,1\}} \tilde\Sas^{abc}
\qquad \text{where} \quad \tilde\Sas^{abc} \in \Salg_a \tensor \Salg_b \tensor \Salg_c 
\end{equation}
and solve the condition \eqref{eq:transport-assoc-diag} for each $\tilde\Sas^{abc}$ separately.
The relation to the $\Sas^{abc}$ entering the expression for $\Sas$ in \eqref{eq:Sas} is $\tilde\Sas^{abc} = \Sas^{abc} \cdot \idem_a \tensor \idem_b \tensor \idem_c$.

For example, in sector {\bf 000}, diagram \eqref{eq:transport-assoc-diag} reads
\begin{equation}\label{eq:Lam-diag-000}
\raisebox{-0.5\height}{\includegraphics[scale=0.3]{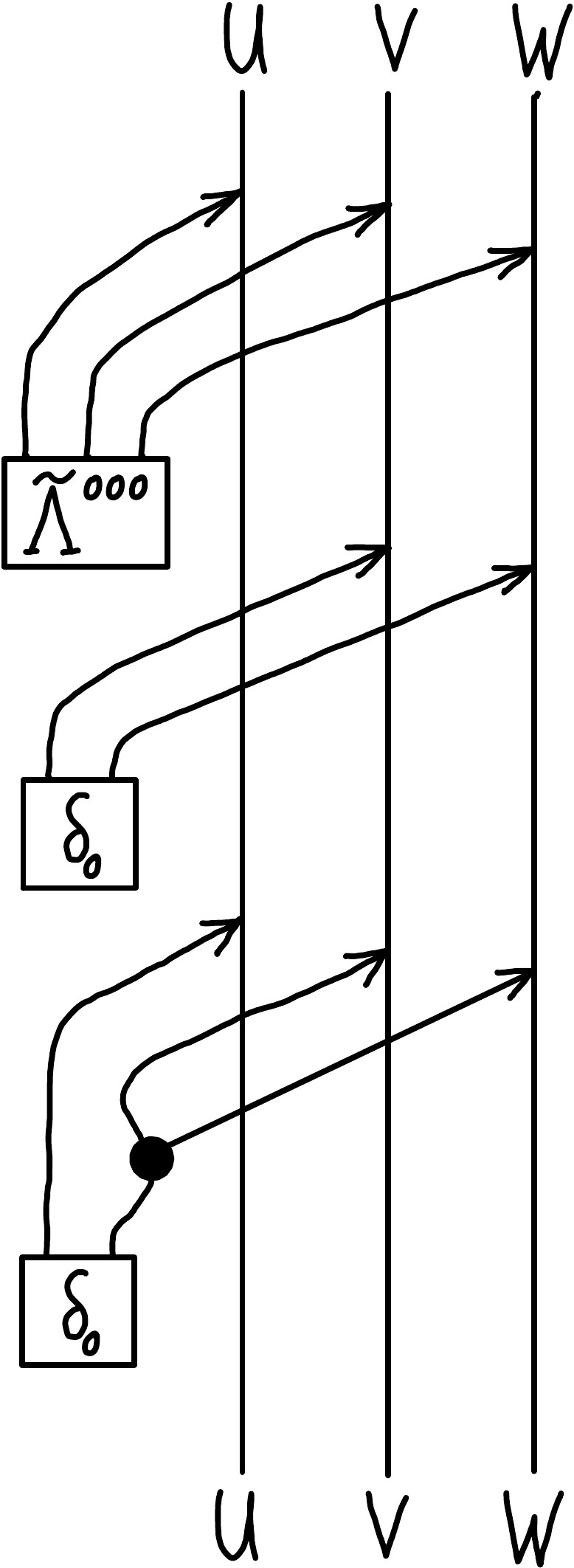}}
\qquad=\qquad
\raisebox{-0.5\height}{\includegraphics[scale=0.3]{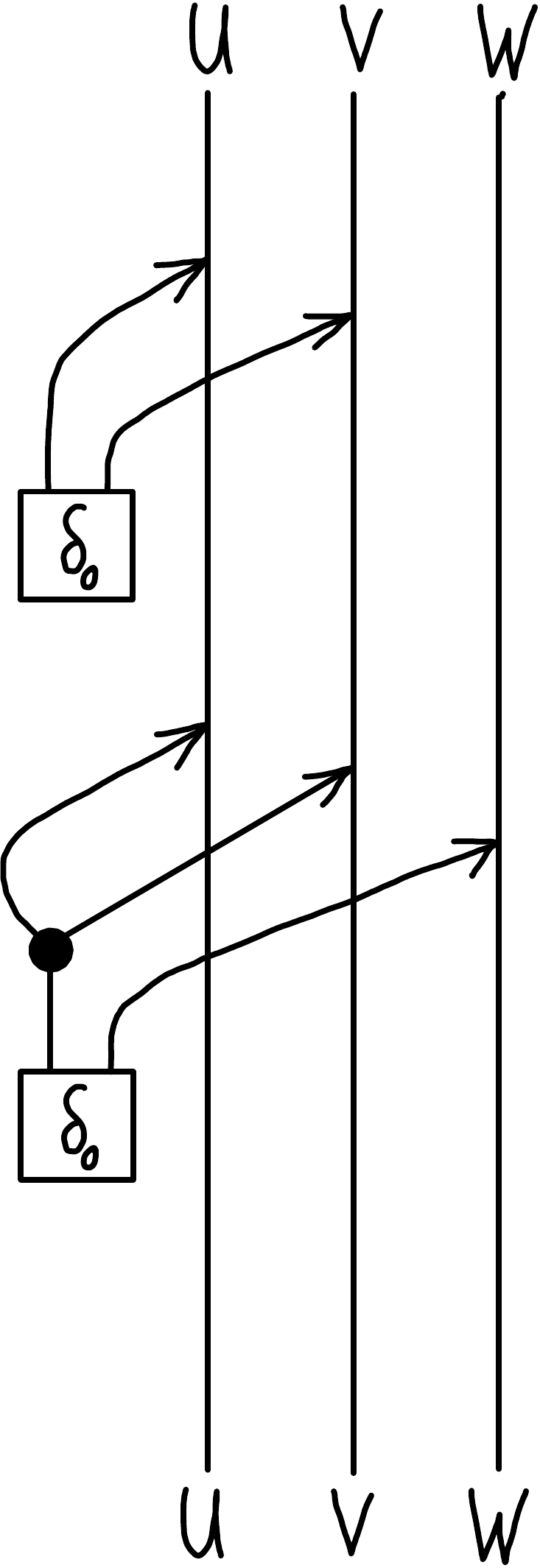}}
\end{equation}
Choosing $U=V=W=\Salg_0$ (recall that we identify $\Salg_0$ and $\algGr$) and evaluating on 
	$\idem_0 \tensor \idem_0 \tensor \idem_0$, 
we see that \eqref{eq:Lam-diag-000} implies
\begin{equation}\label{eq:Lam-diag-000r}
	\tilde\Sas^{000} \cdot (\one \tensor \delta_0) \cdot (\id \tensor \copS)(\delta_0)
	= 
	(\idem_0 \tensor \idem_0 \tensor \idem_0) \cdot
	(\delta_0 \tensor \one) \cdot (\copS \tensor \id)(\delta_0) \ .
\end{equation}
On the other hand, if \eqref{eq:Lam-diag-000r} holds, so does \eqref{eq:Lam-diag-000} for all $U,V,W \in \catSF_0=\repS_0$ by associativity of the $\Salg$-action.
Finally, as $\delta_0$ is multiplicatively invertible, the solution $\tilde\Sas^{000} \in \Salg_0 \tensor \Salg_0 \tensor \Salg_0$ to \eqref{eq:Lam-diag-000r} is unique and given by $\idem_0 \tensor \idem_0 \tensor \idem_0$, as already stated in \eqref{eq:Sas}.
The reasoning is the same in all the sectors as
defined by the projectors $\idem_{i}\tensor\idem_{j}\tensor\idem_{k}$, 
and we just state the condition analogous to \eqref{eq:Lam-diag-000r} 
in each sector in Table \ref{tab:Lam-eqn-sectors}.
On the one hand, these conditions determine $\Sas$ uniquely (by invertibility of $\isoD_{U,V}$), and on the other hand, they guarantee commutativity of the diagram \eqref{eq:transport-assoc-diag} for all $U,V,W$.

\begin{table}[bt]
\begin{align*}
&{\bf001}~:~&&  \tilde\Sas^{001} \cdot \bigl( (\id\tensor\copS)\circ(R_{\delta_1}\tensor R_{\Bel})\circ\copS(v)\bigr)\cdot(\one\tensor\delta_1\tensor\Bel)\\
&&&= (\idem_0\tensor\idem_0\tensor\one)\cdot(\delta_0\tensor\one)\cdot\bigl((\copS\tensor
\id)
\circ(R_{\delta_1}\tensor
	R_{\Bel}
)\circ\copS(v)\bigr)\\[.6em]
&{\bf010}~:~&&  \tilde\Sas^{010} \cdot \bigl( (\id\tensor\copS)\circ(R_{\delta_1}\tensor R_{\Bel})\circ\copS(v)\bigr)\cdot(\one\tensor\Bel\tensor\one)\\
&&&= 
 \bigl((\copS\tensor
	\id
 )\circ(R_{\Bel}\tensor
 	\id
 )\circ\copS(v)\bigr)\cdot(\delta_1\tensor\Bel\tensor\one)\cdot\gamma^{(13)}\\[.6em]
&{\bf100}~:~&&  \tilde\Sas^{100}\cdot (\one\tensor\delta_0)\cdot\bigl( (\id\tensor\copS)\circ(R_{\Bel}\tensor
	\id
)\circ\copS(v)\bigr)\\
&&&=\bigl((\copS\tensor
	\id
)\circ(R_{\Bel}\tensor
	\id
) \circ\copS(v)\bigr)\cdot(\Bel\tensor\one\tensor\one)\\[.6em]
&{\bf110}~:~&&  \tilde\Sas^{110} \cdot \bigl( (\id\tensor\copS)\circ(R_{\Bel}\tensor R_{\Bel})\circ\copS(h)\bigr)\cdot (\one\tensor\Bel\tensor\one)  \\
&&&= \bigl\{(\copS\tensor\id)\bigl(\delta_0\cdot\copS(h)\bigr)\bigr\}\cdot 
(\Bel\tensor\Bel\tensor\one)\\[.6em]
&{\bf101}~:~&&  \tilde\Sas^{101} \cdot \bigl\{ (\id\tensor\copS)\bigl( \copS(h)\cdot \Bel\tensor\Bel\bigr)\bigr\}\cdot (\one\tensor\delta_1\tensor \Bel)\\
&&&= (R_{\Bel}\tensor\mu^{\Salg}\tensor
	\id
)\circ(\copS\tensor
   \sflip_{\B,\Salg_0}
)\circ\bigl(\bigl\{ (R_{\Bel}\tensor R_{\Bel})\circ\copS\circ\mu^{\Salg}\bigr\}\tensor
	\id
\bigr) (h\tensor\gamma)\\[.6em]
&{\bf011}~:~&&  \tilde\Sas^{011} \cdot \bigl( (\id\tensor\copS)(\delta_0)\bigr)\cdot\bigl(\one\tensor(\copS(h)\cdot\Bel\tensor\Bel)\bigr)\\
&&&= \bigl\{\bigl(S^{-1}\circ\mu^{\Salg}\circ(\id\tensor S)\bigr)\tensor\id\tensor \id\bigr\}\\
&&&\qquad\qquad\circ \bigl\{\id\tensor \bigl((R_{\delta_1}\tensor R_{\Bel})\circ\copS\bigr)\tensor
	\id
\bigr\}
 \circ \bigl\{\id\tensor \bigl((R_{\Bel}\tensor R_{\Bel})\circ \copS\bigr)\bigr\}\circ \copS(h)\\[.6em]
&{\bf111}: \quad&& \tilde\Sas^{111} \cdot \bigl\{\bigl(\id\tensor(\copS\circ\mu^\Salg)\bigr)\bigl(\copS(v)\tensor h\bigr)\bigr\} \cdot \Bel\tensor\Bel\tensor\Bel\\
&&&=\bigl\{\bigl((R_{\Bel}\tensor R_{\Bel})\circ\copS\circ\mu^\Salg\bigr)\tensor
	\id
\bigr\}
\circ(
	\id
\tensor\sflip_{\B,\Salg_0})\circ\bigl\{\bigl((R_{\delta_1}\tensor R_{\Bel})\circ\copS(v)\bigr)\tensor\phi(h)\bigr\}
\end{align*}
\caption{Conditions determining 
$\tilde\Sas^{abc}\in\Salg_{a}\tensor\Salg_b\tensor\Salg_c$
	via an equation in $\Salg_{a}\tensor\Salg_b\tensor\Salg_c$. 
The condition for sector {\bf 000} is stated in \eqref{eq:Lam-diag-000r}. The above conditions have to hold for all $h\in\Salg_0$ and $v\in\B$.
In sectors {\bf 010} and {\bf 101}, $\gamma$ is given by 
$\gamma := \exp{C} = (\one\tensor\one + \xm\tensor \xp - \xp\tensor \xm  - \xp \xm\tensor \xp \xm)\cdot\idem_0\tensor\idem_0$ as in Section~\bref{sec:assoc-SF}; the notation $\gamma^{(13)}$ means that it acts on the first and third tensor factors. The map $\phi$ in sector {\bf 111} is given in \eqref{eq:SF-phi-def}.}\label{tab:Lam-eqn-sectors}
\end{table}

Since $\assocS$ satisfies the pentagon by construction, $\Sas$ satisfies the $3$-cocycle condition \eqref{eq:3-cocycle}. (We nonetheless checked this independently by computer algebra.)
	$\Sas$ is in addition counital, $(\id \tensor \eps \tensor \id)(\Lambda) = \one \tensor \one$.
We thus arrive at the following proposition.

\begin{Prop}\label{prop:catSF-repS-monoid}
The natural isomorphisms $\assocS$ from~\eqref{eq:assocS} with $\Sas$ as in \eqref{eq:Sas} define an associator on $\catD$. With respect to this associator, the equivalence $\funD: \catSF \to \repS$ with multiplicative structure $\Delta_{U,V}$ defined in~\eqref{eq:isoD00}-\eqref{eq:isoD11} is monoidal.
\end{Prop}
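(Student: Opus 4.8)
The plan is to apply the transport principle of Section~\bref{sec:trans-assoc} to the multiplicative equivalence $\funD:\catSF\to\repS$ and then to make the resulting associator explicit. By the discussion around diagram~\eqref{eq:transport-assoc-diag}, there is a \emph{unique} natural isomorphism on $\repS$, the transported associator $\assocS$, for which~\eqref{eq:transport-assoc-diag} commutes for all $U,V,W$, and it satisfies the pentagon axiom automatically, since $\alpha^{\catSF}$ does and the $\isoD_{U,V}$ are isomorphisms; the unit constraints transport in the same way. Thus the only substantive claim is that this $\assocS$ is realised by the action of the explicit element $\Sas$ of~\eqref{eq:Sas}, i.e.\ that~\eqref{eq:assocS} holds; the rest is then bookkeeping.

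To prove~\eqref{eq:assocS} I would first use the grading $\repS=\repS_0\oplus\repS_1$ to decouple the problem. Writing $\Sas=\sum_{a,b,c\in\{0,1\}}\tilde\Sas^{abc}$ with $\tilde\Sas^{abc}\in\Salg_a\tensor\Salg_b\tensor\Salg_c$, and using that $\funD$, the $\isoD_{U,V}$ and $\alpha^{\catSF}$ all respect the grading, the single diagram~\eqref{eq:transport-assoc-diag} splits into eight independent conditions, one per parity sector. In a sector it suffices to evaluate on generators: for a $0$-slot one takes the free rank-one module $\Salg_0\cong\algGr$ with cyclic vector $\idem_0$, and for a $1$-slot the projective generator $\B$ of $\repS_1$ with cyclic vector $\Bel$. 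Since both $\assocS$ and the action of $\Sas$ are natural, and since $\Salg_0$ and $\B$ generate the two blocks, agreement on these generators forces agreement everywhere. Carrying this out turns each sector of~\eqref{eq:transport-assoc-diag} into the corresponding identity in $\Salg_a\tensor\Salg_b\tensor\Salg_c$ collected in Table~\bref{tab:Lam-eqn-sectors}, sector {\bf000} being~\eqref{eq:Lam-diag-000r}.

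It then remains to solve these eight equations and to confirm that the components $\tilde\Sas^{abc}=\Sas^{abc}\cdot(\idem_a\tensor\idem_b\tensor\idem_c)$ read off from~\eqref{eq:Sas} are the solutions. Uniqueness of each $\tilde\Sas^{abc}$ follows from invertibility of $\isoD_{U,V}$ (equivalently, from invertibility of $\delta_0$ in sector {\bf000} and of the analogous right factors in the other sectors), so it is enough to verify that the claimed expressions satisfy the equations. Alongside this I would check the three properties needed for $\assocS$ to be a bona fide associator in $\svect$: that $\Sas$ is parity-\emph{even} (so its action is a morphism in $\svect$, not merely in $\vect$), that $\Sas$ is multiplicatively invertible (checked sector by sector), and that $\Sas$ is counital, $(\id\tensor\epsilon\tensor\id)(\Sas)=\one\tensor\one$, so that the transported unit isomorphisms are the trivial ones. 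Together with the uniqueness from the transport principle this identifies $\assocS$ with the action of $\Sas$ and shows that $\funD$ equipped with $\isoD_{U,V}$ is monoidal; the $3$-cocycle/pentagon relation for $\Sas$ then holds automatically, though one may re-derive it independently.

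The main obstacle is the explicit computation in the three heavy sectors {\bf011}, {\bf101} and {\bf111}. There the right-hand sides of Table~\bref{tab:Lam-eqn-sectors} combine the non-coassociative coproduct $\copS$, the multiplication $\mu^\Salg$, the antipode $S$, the projections $R_\Bel$ and $R_{\delta_1}$, the braiding $\sflip$ of $\svect$ with its parity signs, and---in sector {\bf111}---the twist map $\phi$ of~\eqref{eq:SF-phi-def}; the resulting expression for $\tilde\Sas^{111}$ is long, and tracking the signs by hand is impractical, which is exactly why this step is delegated to computer algebra. The conceptually delicate point throughout is the correct bookkeeping of the tensor flips of $\svect$, which was the very reason for interposing the $\svect$-based category $\repS$ between $\catSF$ and $\repQ$.
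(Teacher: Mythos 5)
Your proposal is correct and follows essentially the same route as the paper: transport the associator along $\funD$ via diagram~\eqref{eq:transport-assoc-diag}, split into the eight parity sectors, evaluate on the generators $\Salg_0$ (cyclic vector $\idem_0$) and $\B$ (cyclic vector $\Bel$) to reduce each sector to the algebraic identity in Table~\bref{tab:Lam-eqn-sectors} (sector {\bf000} being~\eqref{eq:Lam-diag-000r}), obtain uniqueness from invertibility of the $\isoD_{U,V}$, and delegate the heavy sectors to computer algebra, with counitality and the automatic pentagon/cocycle property handled exactly as in the paper. Your explicit remarks on parity-evenness and invertibility of $\Sas$ correspond to the paper's footnote and sector-wise invertibility argument, so there is no substantive deviation.
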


\subsection{Transporting the associator from $\repS$ to $\repQ$}
	We repeat the procedure in Section~\bref{sec:trans-assoc} and transport the associator $\assocS$ to an associator $\assocQ$ in $\repQ$ using 
	the  multiplicative equivalence $\funSQ:\repS\to\repQ$. 
Since $\repQ$ consists of (finite-dimensional) $\Q$-modules in vector spaces, the associator on $\repQ$ necessarily takes the form
\begin{equation}\label{eq:assoc-RepQ-via-Phi}
 \assocQ_{U,V,W}(u\tensor v\tensor w)
 = 
 \as. (u\tensor v\tensor w)  \ ,
\end{equation}
where $u,v,w$ are elements of the three $\Q$-modules $U,V,W$ and $\as \in \Q \tensor \Q \tensor \Q$. To compute $\Phi$, we can choose $U=V=W=\Q$ and evaluate on the element $\one \tensor\one\tensor\one$. 
	
In terms of the diagram \eqref{eq:transport-assoc-diag}, this means the following. Recall from the proof of Proposition~\bref{prop:G-equiv} the functor $\fun$ inverse to $\funSQ$. Let us abbreviate $\hat\Q := \fun(\Q)$. The $\Salg$-module $\hat\Q$ has parity involution given by \eqref{Z2-repQ}, $\LL$ acts by $\K^2$ and $\xpm$ act by $\ff^\pm$, see \eqref{eq:funSQ}. Diagram \eqref{eq:transport-assoc-diag} reads
\begin{equation}\label{eq:Phi-from-Lambda}
\big( (\isoG_{\hat\Q,\hat\Q} \tensor \id) \circ \isoG_{\hat\Q \tensor\hat\Q,\hat\Q}\big)\big(\Sas \,\hat. \,(\one \tensor \one \tensor \one)\big)
=
\as \cdot \big[ (\id\tensor\isoG_{\hat\Q,\hat\Q}) \circ \isoG_{\hat\Q,\hat\Q \tensor\hat\Q}(\one \tensor \one \tensor \one) \big] \ ,
\end{equation}
where $\isoG_{U,V}$ is the multiplicative structure from \eqref{eq:isoG} and the notation 
`\,$\hat.$\,' will be explained momentarily.

There are two slightly subtle points in evaluating \eqref{eq:Phi-from-Lambda}. Firstly, $\Sas$ acts on $\hat\Q \otimes \hat\Q \otimes \hat\Q$ via the symmetric braiding in $\svect$, i.e.\ with parity signs. We have written 
`\,$\hat.$\,'
instead of 
`\,.\,' in  \eqref{eq:Phi-from-Lambda} to stress this point. 
Secondly, $\one \in \hat\Q$ 
is not of definite parity (in particular, it is not parity-even), and, since $\hat\Q$ is different from $\Salg$, we cannot simplify $\Sas \,\hat. \,(\one \tensor \one \tensor \one)$ to $\Sas$ as might be suggested by the notation.
There is one other place where one has to be careful with parity signs, and this is the action of $\isoG_{\hat\Q,\hat\Q \tensor\hat\Q}$, which is, for $a,b,c \in \hat\Q$,
\begin{equation}
\isoG_{\hat\Q,\hat\Q \tensor\hat\Q}(a \tensor b \tensor c) = 
a \tensor b \tensor c + 
\idem_1. a \tensor \big[ \copS((\xi-1)\idem_1) \,\hat.\, (b \otimes c) \big] \ .
\end{equation}
Taking all this into account, the unique solution to \eqref{eq:Phi-from-Lambda} can be obtained with the help of computer algebra to be
\begin{multline} \label{eq:Phi-via-f+-}
\as = \idem_0\tensor\idem_0\tensor\idem_0 + \idem_0\tensor\idem_0\tensor\idem_1 + \idem_1\tensor\idem_0\tensor\idem_0
+\idem_0\tensor\idem_1\tensor\idem_1
+ \idem_1\tensor\idem_1\tensor\idem_0 \\
+ \as^{010}\idem_0\tensor\idem_1\tensor\idem_0
+ \as^{101}\idem_1\tensor\idem_0\tensor\idem_1
+ \as^{111}\idem_1\tensor\idem_1\tensor\idem_1\ ,
\end{multline}
where its non-trivial components are factorised as
\begin{align*}
\as^{010} =~& \Bigl(\one\tensor\one\tensor\one
    + (1 + \rmi) \ff^+\K\tensor\K\tensor\ff^-\Bigr)\Bigl(\one\tensor\one\tensor\one + (1 - \rmi) \ff^-\K\tensor\K\tensor\ff^+\Bigr)\ ,\\
\as^{101} =~&\Bigl( \one\tensor\one\tensor\one
+ (1+\rmi) \one\tensor\ff^+\K\tensor\ff^-
+ (1 - \rmi) \ff^-\K\tensor\ff^+\tensor\one
\Bigr)\\
~&\times\Bigl( \one\tensor\one\tensor\one
 + (1 + \rmi)\ff^+\K\tensor\ff^-\tensor\one
 + (1 - \rmi)\one\tensor\ff^-\K\tensor\ff^+ \Bigr)\one\tensor\K\tensor\one\ ,\\
\as^{111} =~&  \ffrac{\beta^2}{\rmi}
\Bigl( \one\tensor\one\tensor\one
+(\rmi-1)\bigl(\one\tensor\ff^+\K\tensor\ff^-
 +  \ff^+\K\tensor\K\tensor\ff^-
 - \ff^+\K\tensor\ff^-\tensor\one
  + \one\tensor\ff^-\ff^+\tensor\one\bigr)
\Bigr)\\
~&\times\Bigl( \one\tensor\one\tensor\one
 - (\rmi-1)\bigl( \one\tensor\ff^-\K\tensor\ff^+
 + \ff^-\K\tensor\K\tensor\ff^+
   -  \ff^-\K\tensor\ff^+\tensor\one
   - \one\tensor\ff^-\ff^+\tensor\one\bigr)
  \Bigr)\\
  ~&\times\Bigl( \one\tensor\one\tensor\one
   - 2 \one\tensor\ff^-\ff^+\tensor\one
   \Bigr)
  \one\tensor\K\tensor\one \ .
 \end{align*}
Above, we have expressed $\Phi$ in terms of the generators $\ff^\pm$ and in a factorised form, but one can check that it is equal to \eqref{eq:as-intro}.

By construction, $\Phi$ satisfies the compatibility condition
\begin{equation}
(\Delta\tensor\id)(\Delta(x)) = \as \cdot  \bigl( (\id\tensor\Delta)(\Delta(x))\bigr) \cdot \as^{-1} ~~,\quad x\in\Q \ ,
\end{equation}
with the coproduct of $\Q$, as well as the cocycle condition \eqref{eq:3-cocycle}.
Since the equivalences we use preserve the standard unit-constraints of the categories, we have
$(\id\tensor\epsilon\tensor\id)(\as) = \one$ as well.
Needless to say, we in addition verified all three identities with the help of computer algebra.

Altogether we have shown:

\begin{Prop}\label{prop:repS-repQ-monoid}
The natural isomorphism $\assocQ$ from~\eqref{eq:assoc-RepQ-via-Phi} with $\Phi$ as in \eqref{eq:Phi-via-f+-} defines an associator on $\repQ$. With respect to this associator, the equivalence $\funSQ: \repS \to \repQ$ with multiplicative structure $\isoG_{U,V}$ defined in~\eqref{eq:isoG} is monoidal.
\end{Prop}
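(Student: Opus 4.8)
The plan is to apply the general transport-of-associator construction of Section~\bref{sec:trans-assoc} to the multiplicative equivalence $\funSQ:\repS\to\repQ$, and then to identify the resulting associator with left action by the element $\as$ of~\eqref{eq:Phi-via-f+-}. First I would invoke Section~\bref{sec:trans-assoc} with $\cat=\repS$ (which is monoidal with associator $\assocS$ by Proposition~\bref{prop:catSF-repS-monoid}), $\catD=\repQ$, the functor $\funSQ$, and multiplicative structure $\Theta_{U,V}=\isoG_{U,V}$ from~\eqref{eq:isoG}. Since $\funSQ$ is a multiplicative equivalence by Propositions~\bref{prop:G-equiv} and~\bref{prop:funSQ-mult}, the cited argument produces a \emph{unique} natural family $\assocQ_{U,V,W}$ making the diagram~\eqref{eq:transport-assoc-diag} commute; moreover $\assocQ$ automatically satisfies the pentagon axiom and turns $\funSQ$ into a monoidal functor. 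This already proves both assertions of the proposition at the abstract level, leaving only the explicit identification of $\assocQ$ with the action of $\as$.

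Next I would show that $\assocQ$ has the form~\eqref{eq:assoc-RepQ-via-Phi}, i.e.\ that it is given by left action of a single element $\as\in\Q^{\tensor3}$. As $\repQ$ consists of modules over ordinary vector spaces and $\Q$ is a projective generator, naturality against the module maps $\Q\to U$ sending $\one\mapsto u$ reduces the value $\assocQ_{U,V,W}(u\tensor v\tensor w)$ to acting on $u\tensor v\tensor w$ by the fixed element $\as:=\assocQ_{\Q,\Q,\Q}(\one\tensor\one\tensor\one)\in\Q^{\tensor3}$. This is the standard Yoneda-type argument for module categories, and it is the reason the associator on $\repQ$ can be encoded by a single tensor.

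Then I would compute $\as$ explicitly. Transporting the defining diagram~\eqref{eq:transport-assoc-diag} through the inverse functor $\fun$ of $\funSQ$ and setting $\hat\Q:=\fun(\Q)$, the specialisation to $U=V=W=\hat\Q$ evaluated on $\one\tensor\one\tensor\one$ is precisely equation~\eqref{eq:Phi-from-Lambda}, whose unique solution is $\as$. Solving it is where the real work lies, and this is the main obstacle: $\Sas$ acts on $\hat\Q^{\tensor3}$ through the symmetric braiding of $\svect$, so all parity signs must be tracked, and --- crucially --- the unit $\one\in\hat\Q$ is \emph{not} parity-homogeneous (its parity involution returns $(\idem_0-\rmi\idem_1)\K$), so $\Sas\,\hat{.}\,(\one\tensor\one\tensor\one)$ must not be collapsed to $\Sas$. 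The same care is needed for the action of $\isoG_{\hat\Q,\hat\Q\tensor\hat\Q}$. After expanding both sides in a PBW basis of $\Q^{\tensor3}$ and solving the resulting linear system by computer algebra, one reads off the factorised expression~\eqref{eq:Phi-via-f+-}, which one checks agrees with~\eqref{eq:as-intro}.

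Finally I would record that $\as$ inherits the remaining quasi-bialgebra properties for free. The compatibility $(\Delta\tensor\id)(\Delta(x))=\as\cdot(\id\tensor\Delta)(\Delta(x))\cdot\as^{-1}$ holds because each $\assocQ_{U,V,W}$ is a $\Q$-module morphism; the $3$-cocycle condition~\eqref{eq:3-cocycle} holds because $\assocS$ satisfies the pentagon and $\funSQ$ is monoidal; and counitality $(\id\tensor\epsilon\tensor\id)(\as)=\one$ holds because the transported unit constraints are the standard ones. All three identities can be confirmed directly by computer algebra as a cross-check. Thus the only genuinely delicate point is the parity bookkeeping of the third step against the non-homogeneous unit of $\hat\Q$, together with the ensuing computer-assisted solution of~\eqref{eq:Phi-from-Lambda}.
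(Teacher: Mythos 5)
Your proposal is correct and takes essentially the same route as the paper: the abstract transport of the associator along $\funSQ$ via diagram~\eqref{eq:transport-assoc-diag}, the reduction (by naturality of $\assocQ$ on modules in $\vect$) to a single element $\as = \assocQ_{\Q,\Q,\Q}(\one\tensor\one\tensor\one)$, the computer-assisted solution of~\eqref{eq:Phi-from-Lambda} with the same two parity caveats the paper flags (the $\svect$-signs in the action of $\Sas$ and the non-homogeneous unit of $\hat\Q$), and the observation that the coproduct-compatibility, $3$-cocycle and counitality identities hold by construction. There are no gaps.
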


\subsection{Quasi-Hopf structure on $\Q$: antipode and the $\Salpha$ and $\Sbeta$ elements}\label{eq:quasi-Hopf-al-be}

We can also introduce an antipode structure on $\Q$ that makes it a quasi-Hopf algebra.
The anti-automorphism $S$ is given by the same 
formulas~\eqref{eq:antipode} as for $\Q$.
The elements $\Salpha$ and $\Sbeta$
characterising the antipode can be found	by Proposition~\bref{prop:antipode}:
fixing $\Salpha=\one$ there is unique $\Sbeta$ satisfying all the axioms of  a quasi-Hopf algebra, 
namely
\begin{equation}
 \Salpha = \one\ ,\qquad \Sbeta = \idem_0 + \beta^2 \bigl(\K - 2\rmi\E\F\bigr)\idem_1 =  \idem_0 -2\rmi\beta^2\cas\idem_1 \ ,
\end{equation}
with  the  Casimir element $\cas$ defined under \eqref{eq:intro-al-be-def}.
These are central elements of $\Q$, and they are invertible
(since $\Sbeta^2=\one$).
We note that the element $\Sbeta$ is a linear combination of all the central primitive idempotents. Indeed, it can be written as
$\Sbeta = \idem_0 -\rmi\beta^2 (\idem_1^+ - \idem_1^-)$,
where idempotents $\idem_1^{\pm}$ are central primitive and correspond to the simple projective covers $\XX^{\pm}_2$ from~\bref{sec:repQ}:
\begin{equation}\label{idem-pm-1}
\idem_1^{\pm} = \bigl( \half\one \pm \cas\bigr)\idem_1\ .
\end{equation}

\subsection{Transporting the braiding}\label{sec:trans-braid}

The braiding on $\repQ$ is computed from that in $\catSF$ along the same lines as the associator. 
Consider the equivalence $\mathcal{P} := \funSQ\circ\funD:\catSF\to\repQ$. The functor $\mathcal{P}$ is monoidal via
\begin{equation}
\Pi_{U,V}: \mathcal{P}(U*V)\to \mathcal{P}(U) \tensor_{\repQ} \mathcal{P}(V)
\quad , \quad
\Pi_{U,V} =  \isoG_{\funD(U),\funD(V)}\circ\funSQ(\isoD_{U,V}) \ .
\end{equation}
Below we will write $\tensor$ instead of $\tensor_{\repQ}$ for brevity. The braiding $\sigma_{M,N}$ on $\repQ$ is 
uniquely 
determined by the braiding $c_{U,V}$ on $\catSF$ (see Section \bref{sec:SF-braiding}) by requiring commutativity of the diagram
\begin{equation}\label{eq:braiding-transport-comm-diag}
\xymatrix@R=22pt@C=42pt{
&\mathcal{P}(U*V)\ar[r]^{\mathcal{P}(c_{U,V})}\ar[d]^{\Pi_{U,V}}&\mathcal{P}(V*U)\ar[d]^{\Pi_{V,U}}&\\
&\mathcal{P}(U)\tensor\mathcal{P}(V)\ar[r]^{\brQ_{\mathcal{P}(U),\mathcal{P}(V)}}&\mathcal{P}(V)\tensor\mathcal{P}(U)&
}
\end{equation}
for all $U,V \in \catSF$. The resulting conditions can be evaluated sector by sector and are collected in Table \ref{tab:R-eqn-sectors}. We give the computation in the {\bf 10}-sector as an example.

\begin{table}[bt]
\begin{align*}
&{\bf00}~:~&&  \sflip_{U,V}\Big( \sflip_{S,S}(\delta_0) \,\hat.\, \gamma^{-1} \,\hat.\, (u \tensor v) \Big)
=
\sigma^{00}_{U,V}\Big( \delta_0 \,\hat.\, (u\tensor v)\Big)
\\[.6em]
&{\bf01}~:~&&  
\sflip_{U,\B \svtensor V}\Big(
\big[\sflip_{\B,U}\big(
\copS(a) 
 \,\hat.\, (\Bel \tensor \kappa)
  \,\hat.\, (\one \tensor u)\big)\big]
  \tensor v
\Big)
\\
&&&=
\sigma^{01}_{U,\B \svtensor V}\Big(
\big[\copS(a) 
 \,\hat.\, (\delta_1 \tensor \Bel)
  \,\hat.\,  (u \tensor \one) \big] \tensor v
\Big)
\\[.6em]
&{\bf10}~:~&&
\sflip_{\B \svtensor U, V} \circ (\id_\B \tensor \id_U \tensor \rho^V) 
\\
&&& \hspace{7em}
\circ (\id_\B \tensor \sflip_{S,U} \tensor \omega_V)  \Big(
\big[(\sflip_{S,S} \circ 
\copS(a) )
 \,\cdot\, (\Bel \tensor (\delta_1\kappa)) \big]
  \tensor u \tensor v
\Big)
\\
&&&=
\sigma^{10}_{\B \svtensor U,V}
\circ (\id_\B \tensor \id_U \tensor \rho^V) 
\circ (R_\Bel \tensor \sflip_{S,U} \tensor \id_V)  \Big(
\copS(a) \tensor u \tensor v
\Big)
\\[.6em]
&{\bf11}~:~&&  
\beta \cdot 
	\sflip_{\B \svtensor U, \B \svtensor V} 
\circ (\id_\B \tensor \sflip_{\B,U} \tensor \id_V) \Big(
\sflip_{\B,\B}\big[
(\id_\B \tensor L_\xi)\big( \copS(h\kappa^{-1}) 
 \,\cdot\, (\Bel \tensor \Bel)\big) \big]
  \tensor u \tensor v
\Big)
\\
&&&=
\sigma^{11}_{\B \svtensor U,\B \svtensor V}
\circ (\id_\B \tensor \id_U \tensor L_\xi \tensor \id_V) 
\\
&&& \hspace{7em}
\circ(\id_\B \tensor \sflip_{\B,U} \tensor \id_V)
\Big(
\big[\copS(h) \,\cdot\, (\Bel \tensor \Bel)\big] \tensor u \tensor v
\Big)
\end{align*}
\caption{Conditions determining $\sigma^{ij}$. 
The conditions have to hold for all $h \in \Salg_0$, $a \in \B$, $u \in U$, $v \in V$ and all $U \in \catSF_i$, $V \in \catSF_j$.
We have written $\,\hat.\,$ to denote the product in $\Salg \svtensor \Salg$ which includes a parity sign (though this only makes a difference in sector {\bf 00}). $L_\xi(u) := \xi.u$ denotes the left action with $\xi = \xp+\xm$,
and 
$\gamma^{-1} := \exp(-C) = (\one\tensor\one - \xm\tensor \xp + \xp\tensor \xm - \xp \xm\tensor \xp \xm)\cdot\idem_0 \tensor \idem_0$ 
and $\kappa := \exp(\frac12\hat C) = (\one - \xp \xm ) \idem_0$, 
see Section \bref{sec:assoc-SF}.
}\label{tab:R-eqn-sectors}
\end{table}

In computing $\Pi_{UV}$ note that $\isoG_{\funD(U),\funD(V)}$ is different from the identity only in sector {\bf 11}, see \eqref{eq:isoG}, and that $\funSQ(f) = f$ for all morphisms $f$ in $\repS$, see Section \bref{sec:funSQ-def}. To evaluate the above diagram
for $U\in\catSF_1$ and $V\in\catSF_0$, 
we thus only need to combine sectors {\bf 10} and {\bf 01} of $\Delta_{UV}$ as given in \eqref{eq:isoD10} and \eqref{eq:isoD01} (see also Figure \ref{fig:Delta_UV}) with the braiding of $\catSF$ as stated in 
Section~\bref{sec:SF-braiding}. In string diagram notation, the resulting condition is
\begin{equation}\label{eq:braid-10-example}
\raisebox{-0.5\height}{\includegraphics[scale=0.3]{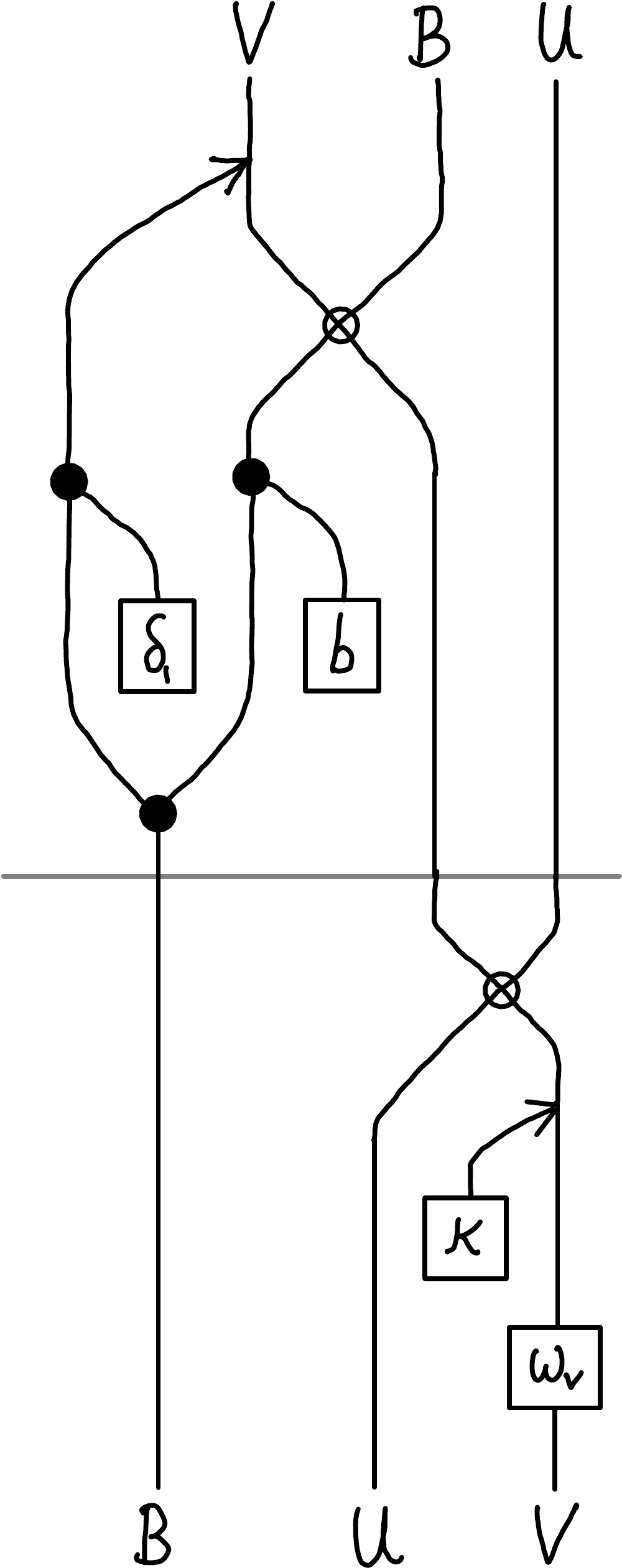}}
\qquad = \qquad
\raisebox{-0.5\height}{\includegraphics[scale=0.3]{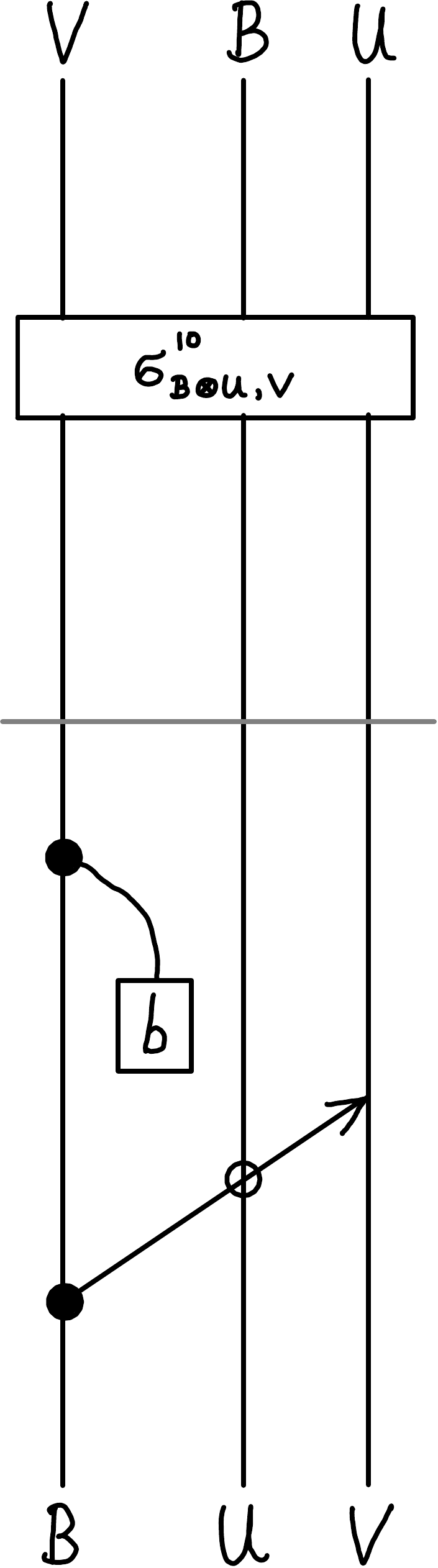}}
\qquad ,
\end{equation}
where 
$\kappa := \exp(\frac12\hat C)=(\one - \xp \xm ) \idem_0$.
The circle around the crossings has been added to stress that in these diagrams all braidings are in $\svect$, even though the diagram itself is a morphism in $\vect$ (or rather in $\repQ_1$).
The morphisms $\sigma^{ij}_{M,N}$ look simpler when written as $\sflip_{M,N} \circ (\text{action of modes})$. We therefore rewrite the left hand side of \eqref{eq:braid-10-example} as
\begin{equation}
\raisebox{-0.5\height}{\includegraphics[scale=0.3]{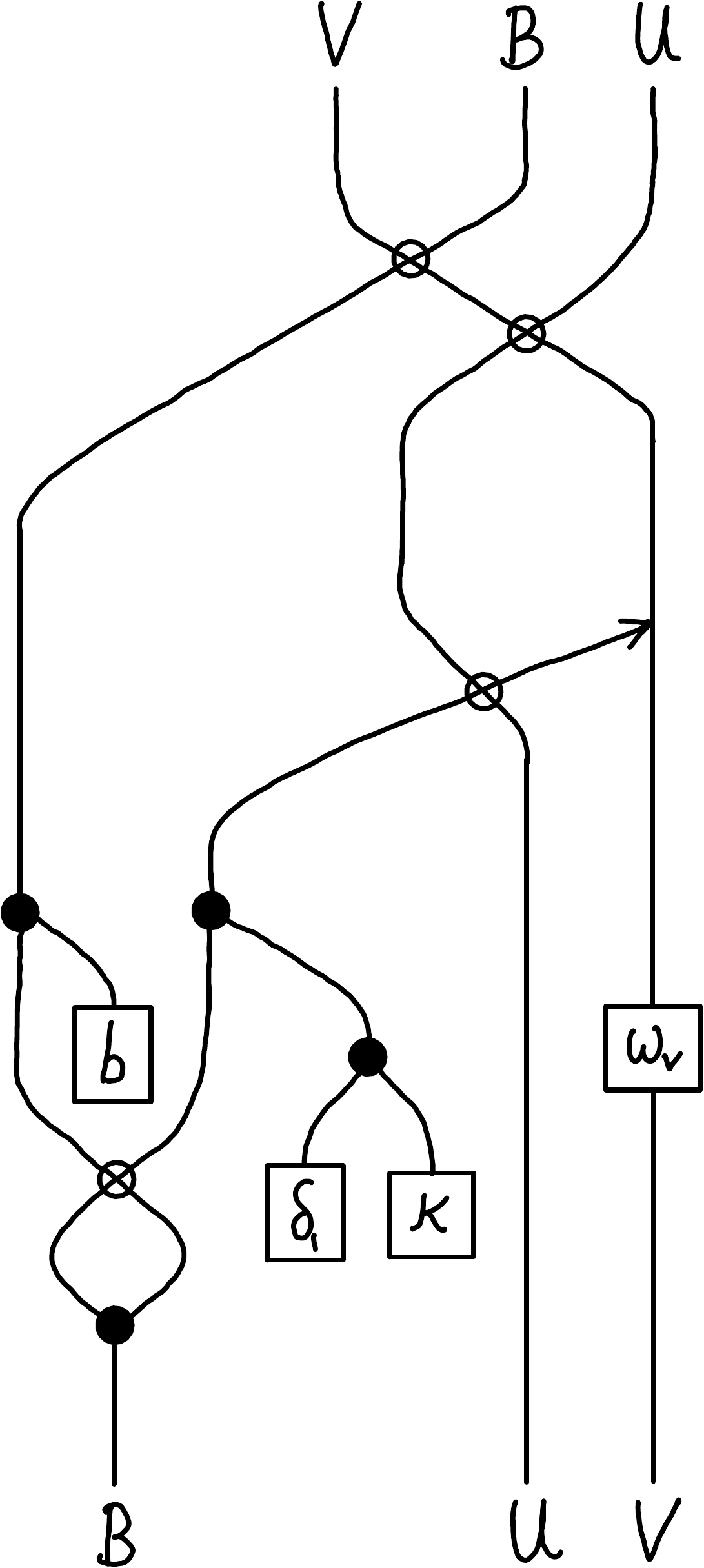}}
\qquad .
\end{equation}
This gives the formula for the {\bf 10}-sector listed in Table~\ref{tab:R-eqn-sectors}.

Since $\mathcal{P}$ is a monoidal equivalence, \eqref{eq:braiding-transport-comm-diag} is solved by a unique natural collection of isomorphisms $\{ \sigma_{M,N} : M \tensor_{\repQ} N \to N \tensor_{\repQ} M \}$. Hence the conditions in Table \ref{tab:R-eqn-sectors} have a unique solution. 
It is given by
\begin{align*}
\brQ^{00} &= \sflip \circ \bigl(\one\tensor\one - 2\ff^-\K\tensor\ff^+\bigr),\\
\brQ^{01} &= \sflip \circ \bigl(\one\tensor\one - (1+\rmi)\ff^-\K\tensor\ff^+
    -(1+\rmi)\ff^+\K\tensor\ff^-  +(1-\rmi)\ff^-\ff^+\tensor\one + 2\rmi\ff^-\ff^+\tensor\ff^-\ff^+\bigr),\\
\brQ^{10} &=  \sflip \circ\bigl(\one\tensor\one + (1 + \rmi) \ff^-\K\tensor\ff^+ + (1 + \rmi) \ff^+\K\tensor\ff^-  
\\ &
\hspace{10em}
   + (1 + \rmi) \one\tensor\ff^-\ff^+ 
- 2 \rmi \ff^-\ff^+\tensor\ff^-\ff^+\bigr)\cdot \one\tensor\K\ ,\\
\brQ^{11} &=   \sflip \circ  \ffrac{\beta}{\rmi} \bigl(\one\tensor\one - 2\rmi  \ff^-\K\tensor\ff^+
+ (\rmi-1)\one\tensor\ff^-\ff^+ 
\\ &
\hspace{10em}
- (1 + \rmi)\ff^-\ff^+\tensor\one +
 2  \ff^-\ff^+\tensor\ff^-\ff^+\bigr)\cdot \K\tensor\one\ .
\end{align*}
We verified (and found) this by computer algebra. 
	To do so, one has to remove the need to verify the conditions in Table \ref{tab:R-eqn-sectors} for all $U,V$. One uses that by naturality, $\sigma_{M,N}$ is uniquely determined by $\sigma_{\Q,\Q}$ for all $M,N \in \repQ$. Consider sector {\bf 10} as an example. It is sufficient to determine $\sigma^{10}_{\Q_1,\Q_0}$. We choose 
	$U = \Q_1$ (the underlying super-vector space), 
	$V = \Q_0$ (as a $\algGr$-module) and compose both sides of \eqref{eq:braid-10-example} with $\id_{\Q_0} \tensor \rho^{\Q_1}$, where $\rho^{\Q_1}$ denotes the action of $\B \subset \Salg_1$ on $\Q_1$. Then by naturality 
$(\id_{\Q_0} \tensor \rho^{\Q_1}) \circ \sigma^{10}_{\B \svtensor \Q_1,\Q_0} = \sigma^{10}_{\Q_1,\Q_0} \circ (\rho^{\Q_1} \tensor \id_{\Q_0})$. Since $\rho^{\Q_1}$ is surjective 
(for example, 
$\idem_1 = \Bel.\idem_1+\Cel.(\ff^-\idem_1)$, etc., with $\Cel = \xp.\Bel$), this determines $\sigma^{10}_{\Q_1,\Q_0}$ uniquely.

When expressed in terms of the generators $\E$, $\F$, and $\K$, the braiding $\sigma$ takes the form
\begin{align*}
\brQ^{00} &= \sflip \circ \bigl(\one\tensor\one +2\rmi\E\tensor\F\bigr)\ ,\\
\brQ^{01} &= \sflip \circ \bigl(\one\tensor\one - (1{-}\rmi)\E\tensor\F
    +(1{-}\rmi)\F\K\tensor\E\K  +(1{+}\rmi)\E\F\K\tensor\one + 2\rmi\E\F\K\tensor\E\F\K\bigr)\ ,\\
\brQ^{10} &=  \sflip \circ\bigl(\one\tensor\one + (1{-}\rmi) \E\tensor\F + (1{-}\rmi) \F\K\tensor\E\K     - (1{-}\rmi) \one\tensor\E\F\K -
 2 \rmi \E\F\K\tensor\E\F\K\bigr)\cdot \one\tensor\K \ ,\\
\brQ^{11} &=   \sflip \circ \ffrac{\beta}{\rmi}\bigl(\one\tensor\one - 2  \E\tensor\F
+ (1{+}\rmi)\one\tensor\E\F\K - (1{-}\rmi)\E\F\K\tensor\one -
 2  \E\F\K\tensor\E\F\K\bigr)\cdot\K\tensor\one \ .
\end{align*}
To finally recover the formula for $R$ as given in \eqref{RQ}, one still needs to solve 
$\sigma_{M,N}(m \otimes n) = \tau_{M,N}(R.(m \otimes n))$, 
where $\tau$ is the symmetric braiding in vector spaces, $\tau_{M,N}(m \otimes n) = n \otimes m$. To this end, we first observe that the braiding in super-vector spaces can be expressed as
\begin{equation}\label{sflip-om}
\sflip_{M,N} = \tau_{M,N} \circ \half(\id_M\tensor\id_N + \omega_M\tensor\id_N + \id_M\tensor\omega_N - \omega_M\tensor\omega_N) \ ,
\end{equation}
where $\omega_M(m) =  (\idem_0-\rmi\idem_1)\K.m$ as in \eqref{Z2-repQ}. This produces the prefactor composed of the generators $\one$ and $\K$ in \eqref{RQ}. Since this R-matrix arises as a transported braiding from a braided monoidal category, by construction it
gives a morphism in $\repQ$ (i.e.\ it satisfies \eqref{eq:RD=DopR}), and it obeys the two hexagon identities (i.e.\ it satisfies the two identities in \eqref{eq:R-mat-hex12}).

Together with Proposition \bref{prop:repS-repQ-monoid} and Section \bref{eq:quasi-Hopf-al-be} we have now proved Theorem \bref{thm:2}. 
In fact, along the way we have also proved Theorem \bref{thm:3}: by Propositions \bref{prop:catSF-repS-monoid} and \bref{prop:repS-repQ-monoid}, the equivalence $\mathcal{P}$ is monoidal. By construction of the R-matrix of $\Q$, the equivalence $\mathcal{P}$ is also braided.

\begin{Rem}\label{rem:trans-braid-0}
It is known that the quotient of $\Q$ by the ideal generated by $(\one-\K^2)$ (the algebra isomorphic to $\Q_0$) is a quasi-triangular
	Hopf algebra (rather than quasi-Hopf)
with the standard 
 R-matrix (see, e.g.,~\cite{Kassel})
\begin{align}\label{eq:R00}
\Rst &= \half\sum_{m,n=0,1}(-1)^{mn}\K^m\tensor\K^n\bigl(\one\tensor\one + 2\rmi \E\tensor\F\bigr)\\
&= \half\sum_{m,n=0,1}(-1)^{mn}\K^m\tensor\K^n\bigl(\one\tensor\one - 2\ff^-\K\tensor\ff^+\bigr) \ .
\nonumber
\end{align}
This coincides with the $00$-component of the R-matrix~\eqref{RQ} just computed.
Introducing the quasi-Hopf structure on $\Q$ given by the coassociator $\as$ and the antipode elements $\Salpha$ and $\Sbeta$, 
we have thus extended the quasi-triangular structure from the quotient $\Q_0$ onto the whole quantum group $\Q$. 
\end{Rem}

\subsection{Transporting the ribbon twist}

The ribbon twist in $\catSF$ is given in Section \bref{eq:SF-twist}. The ribbon twist in $\repQ$ is uniquely determined by 
\begin{equation}\label{eq:theta-transp}
	\theta_{\mathcal{P}(U)} = \mathcal{P}(\theta_U) : \mathcal{P}(U) \longrightarrow \mathcal{P}(U) \ .
\end{equation}
In sector {\bf 0} this means that for $M \in \repQ_0$, $m \in M$, we have $\theta_M(m) = (1+2 \ff^+ \ff^-).m$. 

In sector {\bf 1} the calculation is more interesting. Condition \eqref{eq:theta-transp} now reads $\theta_{\B \svtensor V} = \beta^{-1} \cdot \id_\B \tensor \omega_V$. To proceed, we note that, for all $a \in \B$,
\begin{equation}
	\omega_\B(a) = (\ff^-\ff^+ - \ff^+\ff^-).a \ .
\end{equation}
We can therefore write, for $a \in \B$ and $u \in U$, 
\begin{equation}
\theta_{\B \svtensor U}(a \tensor u) = \beta^{-1} \cdot \omega_{B \svtensor U}\Big(
\big[(\ff^-\ff^+ - \ff^+\ff^-)a\big] \tensor u \Big) \ ,
\end{equation}
where we used that $\omega$ is monoidal. Since in sector {\bf 1}, $\omega$ is given by $-\rmi\K$, see \eqref{Z2-repQ},  we have, for $M \in \repQ_1$, $m \in M$, that $\theta_M(m) = \beta^{-1} (-\rmi) \K(\one - 2 \ff^+\ff^-).m$. Altogether, 
\begin{equation}\label{eq:theta-2sectors}
\theta_M(m) = \Big( \idem_0 (\one + 2 \ff^+\ff^-) - \rmi \beta^{-1} \idem_1 \K (\one-2 \ff^+ \ff^-) \Big).m \ ,
\end{equation}
where now $M \in \repQ$ and $m \in M$.

In our convention (and in that of, e.g.,\ \cite{Kassel}), acting with 
the ribbon element $\ribbon$ of a Hopf algebra gives the inverse twist. Taking the inverse of \eqref{eq:theta-2sectors} produces
\begin{equation}\label{eq:ribbon-el-Q}
	\ribbon = (\idem_0 - \rmi \beta \K \idem_1)(\one - 2 \ff^+ \ff^-) \ .
\end{equation}
By construction, $\ribbon$ is central (as its left-action is an intertwiner) and invertible (since the ribbon twist in $\catSF$ is).
Its decomposition on the three primitive central idempotents
 $\idem_0=\half(\one+\K^2)$ and $\idem_1^{\pm}$ defined in~\eqref{idem-pm-1}, 
 and central nilpotents
	$\nilp^{\pm} = \half \E\F (\one\pm\K)\idem_0$
is
\begin{equation}\label{ribbon-decomp}
\ribbon = \idem_0 + \beta(\idem_1^+ - \idem_1^-)
+2\rmi(\nilp^+-\nilp^-) \ .
\end{equation}

Let $M = R_{21}R$ be the monodromy matrix. Explicitly, it is given by, for $\q=\rmi$,
\begin{equation}\label{bar-M-q=i}
   M
  =\ffrac{1}{4}
  \sum_{m,n=0}^{1}
  \sum_{i,j=0}^{3}
  \big(\tfrac{\beta^2}{\q}\big)^{ij + m(i + j + 1)}
  (\q - \q^{-1})^{m + n}\,
  \q^{- m^2 - m j + 2n j - 2n i - i j + m i} \,
  \F^{m} \E^{n} \K^{j}\tensor \E^{m} \F^{n} \K^{i} \ .
\end{equation}
It is straightforward to see that the conditions $M\Delta(\ribbon) = \ribbon\tensor\ribbon$ and $S(\ribbon)=\ribbon$
 hold (see Definition \bref{def:ribbonel}), and so we get:

\begin{Lemma}
$\ribbon$ 
from~\eqref{ribbon-decomp}
 is a ribbon element for the quasi-triangular quasi-Hopf algebra $\Q$.
\end{Lemma}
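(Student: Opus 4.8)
The plan is to verify directly the defining conditions of a ribbon element from Definition~\bref{def:ribbonel}: that $\ribbon$ is central and invertible, that $S(\ribbon)=\ribbon$, and that it is compatible with the monodromy, $M\Delta(\ribbon)=\ribbon\tensor\ribbon$. Centrality and invertibility come for free from the construction. Indeed, $\ribbon$ was defined so that its left action is the inverse of the transported ribbon twist, and the family $\{\theta_M\}$ is a natural automorphism of the identity functor on $\repQ$; naturality forces the action of $\ribbon$ on every module to be an intertwiner, so $\ribbon$ is central, while invertibility of each $\theta_M$ gives invertibility of $\ribbon$. Both facts are also visible in the decomposition~\eqref{ribbon-decomp}: the coefficients of the central primitive idempotents $\idem_0$ and $\idem_1^{\pm}$ are all nonzero, so $\ribbon$ acts invertibly on each block, and the counit sends~\eqref{ribbon-decomp} to $1$, giving $\epsilon(\ribbon)=1$ as well.

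Next I would check $S(\ribbon)=\ribbon$. Starting from the compact form $\ribbon=(\idem_0-\rmi\beta\K\idem_1)(\one-2\ff^+\ff^-)$ in~\eqref{eq:ribbon-el-Q}, one applies the antipode~\eqref{eq:antipode} term by term, using $S(\idem_i)=\idem_i$, $S(\K)=\K^{-1}$, and the fact that $S$ reverses products. The only nontrivial input is the behaviour of $S$ on the even bilinear $\ff^+\ff^-$; once this is recorded, the identity follows from a short algebraic manipulation inside the $16$-dimensional algebra $\Q$, and it can be confirmed either by hand or by computer algebra.

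The main computational task is the monodromy condition $M\Delta(\ribbon)=\ribbon\tensor\ribbon$, with $M=R_{21}R$ given in closed form in~\eqref{bar-M-q=i}. Conceptually this identity is forced by the construction: $\ribbon^{-1}$ implements the transported ribbon twist of $\catSF$, and on any tensor product of $\Q$-modules the double braiding is precisely the action of $R_{21}R$ (the associator does not enter, as source and target are the single object $M\tensor N$), so the twist compatibility $\theta_{X*Y}=(\theta_X\tensor\theta_Y)\circ c_{Y,X}\circ c_{X,Y}$ translates into $\Delta(\ribbon)=M^{-1}(\ribbon\tensor\ribbon)$. To confirm this at the level of the algebra, and to match the precise form of Definition~\bref{def:ribbonel}, I would verify it by direct computation: first expand $\Delta(\ribbon)$ by applying the coproduct to~\eqref{ribbon-decomp}, namely the idempotent part via~\eqref{eq:cop-idem} together with~\eqref{idem-pm-1}, and the fermionic part via~\eqref{eq:Delta-fpm}, being careful that $\Delta$ is not cocommutative; then multiply this element of $\Q\tensor\Q$ by the double sum defining $M$, reduce the product to a PBW basis, and compare with $\ribbon\tensor\ribbon$ obtained by tensoring~\eqref{ribbon-decomp} with itself. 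This bookkeeping over the $256$-dimensional space $\Q\tensor\Q$ is the genuine obstacle, and it is exactly the sort of calculation discharged by computer algebra elsewhere in the paper.
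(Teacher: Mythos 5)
Your proposal is correct and takes essentially the same route as the paper: there, centrality and invertibility of $\ribbon$ are likewise obtained by construction (the transported ribbon twist is a natural isomorphism, invertible because the twist in $\catSF$ is), while the two defining conditions $M\Delta(\ribbon)=\ribbon\tensor\ribbon$ and $S(\ribbon)=\ribbon$ of Definition~\bref{def:ribbonel} are verified by direct computation with the explicit monodromy matrix~\eqref{bar-M-q=i}, exactly as you plan. Your extra categorical remark that the monodromy condition is forced by transporting the balancing identity $\theta_{X*Y}=(\theta_X\tensor\theta_Y)\circ c_{Y,X}\circ c_{X,Y}$ along the braided equivalence is a sound supplement, but it does not change the approach.
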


\begin{Rem}
The Hopf algebra $\UresSL2$ at $\q = e^{\rmi\pi/p}$ can be realised as a Hopf subalgebra of a quasi-triangular Hopf algebra $\overline D$ of twice the dimension of $\UresSL2$, see \cite[Sect.\,4.1]{[FGST]}. It turns out that the monodromy matrix and ribbon element of $\overline D$ lie in the subalgebra $\UresSL2 \tensor \UresSL2 \subset \overline D \tensor \overline D$. For $p=2$, these expressions agree with \eqref{eq:ribbon-el-Q} and \eqref{bar-M-q=i} in the case $\beta = \exp(+\pi \rmi / 4)$,
see \cite[Sect.\,4.2\,\&\,4.6]{[FGST]}. It is verified in \cite{[FGST]} that this monodromy matrix and ribbon element reproduce the $SL(2,\oZ)$-action on the  $(3p-1)$-dimensional space of $\Walg_p$-torus amplitudes. 
In the present paper, the symplectic fermion case is $\beta = \exp(-\pi \rmi / 4)$
(the difference to \cite{[FGST]} arises from the convention of how to define the $T$-action in terms of $\ribbon$ and from the normalisation convention for the (co)integral, see Appendix \bref{app:SL2-defs}).
We show in Appendix~\bref{app:SL2Z} that our monodromy matrix and the ribbon element  at 
any $\beta$ with $\beta^4=-1$
do define  an $SL(2,\oZ)$-action on the centre of the quasi-Hopf algebra $(\Q,\as)$, and that 
at $\beta = \exp(-\pi \rmi / 4)$ 
this  representation of $SL(2,\oZ)$ is isomorphic
 to the one on symplectic fermion torus blocks
and to the one in~\cite{[FGST]}.
\end{Rem}

\appendix

\section{Conventions for quasi-bialgebras and quasi-Hopf algebras}\label{app:qausi-Hopf}
In this appendix, we review basics of theory of quasi-Hopf 
algebras~\cite{Dr-quasi}
(for conventions, we follow~\cite[Sec. 16.1]{ChPr}).
In this paper (as in \cite[Sec. 16.1]{ChPr}) we make the
\begin{quote}
	\textit{Assumption 1:} We will only consider quasi-Hopf algebras $A$ such
	 that the unit isomorphisms $\lambda_U$ and $\rho_U$  in $\rep A$ are as in $\vect$.
\end{quote}	 
This simplifies for example the $\epsilon$-conditions~\eqref{eq:eps-Delta} and~\eqref{eq:counital} below as they do not involve non-trivial invertible elements $l$ and $r$.

\begin{Dfn} \label{def:quasi-bialgebra} A quasi-bialgebra (say, over $\oC$) is an associative algebra $A$  over $\oC$ together with
algebra homomorphisms: the counit $\epsilon: A\to \oC$ and the comultiplication $\Delta: A\to A\tensor A$, and an
invertible element $\as\in A\tensor A\tensor A$ called the coassociator, satisfying the following conditions:
\begin{equation}\label{eq:eps-Delta}
(\epsilon\tensor\id)\circ\Delta = \id =  (\id\tensor\epsilon)\circ\Delta,
\end{equation}
and
\begin{equation}\label{eq:as-intertwiner}
(\Delta\tensor\id)(\Delta(a)) = \as \, \big( (\id\tensor\Delta)(\Delta(a))\big) \, \as^{-1},
\end{equation}
for all $a\in A$; and the coassociator $\as$ is counital
\begin{equation}\label{eq:counital}
(\id\tensor\epsilon\tensor\id)(\as) = \one\tensor\one
\end{equation}
and is a $3$-cocycle
\begin{equation}\label{eq:3-cocycle}
(\Delta\tensor\id\tensor\id)(\as)\cdot (\id\tensor\id\tensor\Delta)(\as) = (\as\tensor\one)\cdot(\id\tensor\Delta\tensor\id)(\as)\cdot(\one\tensor\as).
\end{equation}
\end{Dfn}

The associativity isomorphisms $\alpha^{\rep A}_{U,V,W} : U \tensor (V \tensor W) \to (U \tensor V) \tensor W$ for the tensor product of $\rep A$ are related to the coassociator $\as$ of $A$ by
\begin{equation*}
\alpha^{\rep A}_{U,V,W}(u\tensor v\tensor w) = \as.(u\tensor v\tensor w)\ ,
\end{equation*}
for any elements $u$, $v$, $w$ in  $A$-modules $U$, $V$, and $W$, respectively.
\begin{Dfn}\label{def:quasi-triang_for_quasi-hopf}
	 A quasi-triangular quasi-bialgebra is a quasi-bialgebra  $A$
		equipped with an invertible element $R\in A\tensor A$ called \textit{the universal R-matrix} 
	(or \textit{R-matrix} for short)
		such that
\begin{equation}\label{eq:RD=DopR}
R\Delta(a) = \Delta^{\mathrm{op}}(a)R
\end{equation}
for  all $a\in A$;
and the quasi-triangularity conditions
\begin{equation}\label{eq:R-mat-hex12}
\begin{split}
\Delta\tensor\id(R) &= \as_{231}^{-1} R_{13} \as_{132} R_{23} \as^{-1},
\\
\id\tensor\Delta(R) &= \as_{312} R_{13} \as_{213}^{-1} R_{12} \as.
\end{split}
\end{equation}
Here, we set $\as_{231}= \sum_{(\as)}\as''\tensor \as'''\tensor \as'$, \textit{etc.}, for an expansion $\as= \sum_{(\as)}\as'\tensor \as''\tensor \as'''\in A\tensor A\tensor A$, and also $R_{13}=\sum_{(R)}R_1\tensor\one\tensor R_2$, for an expansion $R=\sum_{(R)}R_1\tensor R_2$. 
\end{Dfn}

The  braiding isomorphisms $\sigma_{U,V}$ in  $\rep A$ are related to the  
 R-matrix by
$$\sigma_{U,V}(u \otimes v) = \tau_{U,V}(R. (u \otimes v)) \ ,$$
 where $\tau$ is the symmetric braiding in vector spaces, $\tau_{U,V}(u \otimes v) = v \otimes u$. Due to~\eqref{eq:R-mat-hex12},  the isomorphisms satisfy 
	 the hexagon axioms of a braided monoidal category.
Applying the linear map $\id\tensor\epsilon\tensor\id$ to both equations in~\eqref{eq:R-mat-hex12} and using the counital condition~\eqref{eq:counital}, we obtain the following proposition.

\begin{Prop}\label{prop:eps-R}
\cite[Sec. 3]{Dr-quasi}
 Under Ass.~1, for a quasi-triangular quasi-bialgebra we have two relations
\begin{equation}
(\epsilon\tensor \id)(R) = \one = (\id\tensor \epsilon)(R).
\end{equation}
\end{Prop}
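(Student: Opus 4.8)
The plan is to apply the linear map $g := \id \tensor \epsilon \tensor \id : A^{\tensor 3} \to A \tensor A$ to each of the two hexagon identities in \eqref{eq:R-mat-hex12} and to read off the two claimed relations. The structural observation that makes this work is that $g$ is in fact an \emph{algebra} homomorphism: since $\epsilon$ is an algebra map and the two identity maps are as well, $g$ is a tensor product of algebra maps and is therefore multiplicative. This lets me distribute $g$ over the products of five factors on the right-hand sides of \eqref{eq:R-mat-hex12} and evaluate it factor by factor.

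First I would treat the left-hand sides. Writing $R = \sum R_1 \tensor R_2$, the counit axiom \eqref{eq:eps-Delta} gives $g\big((\Delta \tensor \id)(R)\big) = \sum (\id\tensor\epsilon)(\Delta(R_1)) \tensor R_2 = R$, and likewise $g\big((\id \tensor \Delta)(R)\big) = R$. On the right-hand sides I would evaluate $g$ on each factor separately: one has $g(R_{13}) = R$, while $g(R_{23}) = \one \tensor (\epsilon \tensor \id)(R)$ in the first hexagon and $g(R_{12}) = (\id \tensor \epsilon)(R) \tensor \one$ in the second. The four permuted coassociators $\as_{231}, \as_{132}$ (first hexagon) and $\as_{312}, \as_{213}$ (second hexagon), together with $\as$ and $\as^{-1}$, should each be sent by $g$ to $\one \tensor \one$.

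The one genuine obstacle is verifying this last point for the \emph{permuted} associators. For $\as^{\pm 1}$ it is immediate: $g(\as) = \one \tensor \one$ is precisely the counital condition \eqref{eq:counital}, and $g(\as^{-1}) = g(\as)^{-1} = \one \tensor \one$ since $g$ is multiplicative. For $\as_{132}$ and $\as_{213}$, however, applying $g$ places $\epsilon$ on the \emph{third}, respectively \emph{first}, tensor leg of $\as$, so one needs the two ``outer'' counital relations $(\id \tensor \id \tensor \epsilon)(\as) = \one \tensor \one$ and $(\epsilon \tensor \id \tensor \id)(\as) = \one \tensor \one$; the cases $\as_{231}$ and $\as_{312}$ then reduce to these after a harmless flip of the two surviving legs. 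These outer relations are not listed in Definition~\bref{def:quasi-bialgebra}, but they are standard consequences of \eqref{eq:counital} and the $3$-cocycle identity \eqref{eq:3-cocycle}: applying $\id \tensor \epsilon \tensor \id \tensor \id$ to both sides of \eqref{eq:3-cocycle}, collapsing the inner coproducts by \eqref{eq:eps-Delta} and the relevant middle legs by \eqref{eq:counital}, reduces \eqref{eq:3-cocycle} to $\as = \as \cdot \big(\one \tensor (\epsilon \tensor \id \tensor \id)(\as)\big)$, and cancelling the invertible factor $\as$ isolates the left outer relation; the right outer relation follows by the symmetric manipulation.

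With all the permuted associators collapsed to $\one \tensor \one$, multiplicativity of $g$ turns the first hexagon into $R = R \cdot \big(\one \tensor (\epsilon \tensor \id)(R)\big)$, so that multiplying by $R^{-1}$ (here invertibility of $R$ is used) yields $(\epsilon \tensor \id)(R) = \one$. The second hexagon reduces symmetrically to $R = R \cdot \big((\id \tensor \epsilon)(R) \tensor \one\big)$, giving $(\id \tensor \epsilon)(R) = \one$. This establishes both relations, and completes the proof.
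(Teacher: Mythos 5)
Your proof is correct and follows essentially the same route as the paper, whose entire argument is the one sentence preceding Proposition~\bref{prop:eps-R}: apply $\id\tensor\epsilon\tensor\id$ to both hexagon identities \eqref{eq:R-mat-hex12} and use the counital condition \eqref{eq:counital}. Your additional step --- deriving the outer counit relations $(\epsilon\tensor\id\tensor\id)(\as)=\one\tensor\one=(\id\tensor\id\tensor\epsilon)(\as)$ from the $3$-cocycle identity \eqref{eq:3-cocycle} so that the permuted coassociators $\as_{231},\as_{132},\as_{312},\as_{213}$ collapse under the (multiplicative) map $\id\tensor\epsilon\tensor\id$ --- is a correct and worthwhile filling-in of a detail the paper leaves implicit.
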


Proposition~\bref{prop:eps-R}  corresponds to the commutativity of the diagram involving the left- and right-units and the braiding.
	Altogether, we have now turned
$\rep A$ into a braided monoidal category.

\begin{Dfn} \label{def:quasi-Hopf_ass1}
	 Under Ass.~1, a \textit{(quasi-triangular) quasi-Hopf algebra} is a 
(quasi-triangular) quasi-bialgebra  $A$ equipped with an anti-homomorphism $S: A\to A$ called
	 \textit{the antipode}, and elements $\Salpha$, $\Sbeta\in A$, such that
\begin{equation}\label{eq:Salpha-1}
\sum_{(a)}S(a')\Salpha a'' = \epsilon(a)\Salpha\ ,\qquad
\sum_{(a)}a'\Sbeta S(a'') = \epsilon(a)\Sbeta
\end{equation}
for all $a\in A$; and
\begin{equation}\label{eq:Salpha-2}
\sum_{(\as)}S(\as')\Salpha \as''\Sbeta  S(\as''') = \one\ ,\qquad
\sum_{(\as^{-1})}(\as^{-1})'\Sbeta S((\as^{-1})'')\Salpha  (\as^{-1})''' = \one \ .
\end{equation}
\end{Dfn}

\begin{Prop}\cite[Prop. 1.1]{Dr-quasi}\;
If the triple $\tilde{S}$, $\tilde{\Salpha}$, $\tilde{\Sbeta}$ gives another antipode structure in $A$ then
there exists unique element $U\in A$ such that
 \begin{equation}
\tilde{S}(a) = U S(a)U^{-1}\ ,\qquad
\tilde{\Salpha} = U \Salpha\ ,\qquad
\tilde{\Sbeta} = \Sbeta U^{-1} \ .
\end{equation}
\end{Prop}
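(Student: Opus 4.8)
The plan is to interpret the antipode data categorically and then extract $U$ from the regular representation. Recall from Definition~\bref{def:quasi-Hopf_ass1} that the role of a triple $(S,\Salpha,\Sbeta)$ is precisely to equip $\rep A$ with left duals: for an $A$-module $V$ one takes $V^{*}$ to be the linear dual with action $(a.f)(v)=f(S(a).v)$, and the evaluation and coevaluation morphisms are built from $\Salpha$ and $\Sbeta$. The conditions~\eqref{eq:Salpha-1} say exactly that $\ev_V$ and $\coev_V$ are $A$-module maps, while~\eqref{eq:Salpha-2} are the two triangle (zig-zag) identities, with the needed associativity constraints supplied by $\as$. A second antipode structure $(\tilde S,\tilde\Salpha,\tilde\Sbeta)$ yields a second left-dual functor; call the two duals $V^{*}$ and $\tilde V^{*}$. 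They share the same underlying vector space but carry a priori different $A$-actions (via $S$ versus $\tilde S$) and different duality maps.

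First I would invoke the fact that in any monoidal category left duals are unique up to a unique isomorphism compatible with evaluation and coevaluation. This yields, for every $V$, a unique $A$-module isomorphism $\phi_V:\tilde V^{*}\to V^{*}$ intertwining the two duality structures, and $\phi$ is automatically natural in $V$. Next I would evaluate on the left regular module $V=A$. A natural family of maps between the two dualization functors is pinned down by its value on $A$, and naturality with respect to all right multiplications $R_b$ (which exhaust $\Endo_A({}_AA)$) forces the comparison $\phi_A$ to be implemented by a single element $U\in A$. Reading off what it means for this $\phi_A$ to intertwine the $S$- and $\tilde S$-twisted actions gives $\tilde S(a)=U S(a)U^{-1}$, while compatibility with $\ev$ and $\coev$ gives $\tilde\Salpha=U\Salpha$ and $\tilde\Sbeta=\Sbeta U^{-1}$. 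Invertibility of $U$ is immediate because $\phi_A$ is an isomorphism, and uniqueness of $U$ follows from the uniqueness of $\phi$.

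An equivalent, purely algebraic route — which is the one Drinfeld carries out — is to write the element down directly, the natural candidate being $U=\sum_{(\as)}\tilde S(\as')\,\tilde\Salpha\,\as''\,\Sbeta\,S(\as''')$ (note that for a genuine Hopf algebra, where $\as=\one\tensor\one\tensor\one$ and $\Salpha=\Sbeta=\one$, this collapses to $U=\one$, recovering the uniqueness of the ordinary antipode), together with a symmetric expression for $U^{-1}$ obtained by exchanging the two structures. One then verifies the three displayed identities by direct manipulation, using~\eqref{eq:Salpha-1}, \eqref{eq:Salpha-2}, the cocycle condition~\eqref{eq:3-cocycle} and the counital condition~\eqref{eq:counital}.

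I expect the main obstacle to be exactly this bookkeeping: tracking where the coassociator $\as$ and its $3$-cocycle property enter when one passes from the abstract triangle identities to the algebraic conjugation formulas. In the categorical approach the same difficulty reappears as the need to check carefully that the canonical comparison map for duals, written out for $V=A$, is genuinely implemented by a single element $U$ — that is, that the nontrivial associativity constraints do not obstruct collapsing the natural family $\{\phi_V\}$ to one element of $A$.
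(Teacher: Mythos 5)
Your proposal is sound, but note first that the paper contains no proof of this proposition to compare against: the statement is imported verbatim from Drinfeld, and only the companion existence statement, Proposition~\bref{prop:antipode}, is proved in the appendix. Your second, algebraic route is essentially the cited proof of \cite[Prop.~1.1]{Dr-quasi}: one takes $U=\sum_{(\as)}\tilde S(\as')\,\tilde\Salpha\,\as''\,\Sbeta\,S(\as''')$ together with a companion candidate for $U^{-1}$ built from $\as^{-1}$ --- beware that simply ``exchanging the two structures'' is not quite right; as with the two expressions for $\Sgamma$ in \eqref{eq:Sbeta-Psi}, the inverse also trades $\as$ for $\as^{-1}$ and redistributes the antipodes among the tensor slots --- and verifies the three identities using \eqref{eq:Salpha-1}, \eqref{eq:Salpha-2} and \eqref{eq:3-cocycle}. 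A sharper sanity check than your Hopf-algebra collapse: setting $(\tilde S,\tilde\Salpha,\tilde\Sbeta)=(S,\Salpha,\Sbeta)$, your candidate reduces to $\one$ by the first identity in \eqref{eq:Salpha-2}, as it must. Your categorical route is a genuinely different and attractive packaging, with two caveats. First, it quietly assumes that the regular module ${}_AA$ is an object of the rigid category, i.e.\ that $\dim A<\infty$; this is harmless for $\Q$ and $\Salg$, but the algebraic route is what proves Drinfeld's statement in general. Second, ``uniqueness of $U$ follows from the uniqueness of $\phi$'' needs the converse observation that any $U'$ satisfying the three displayed identities induces a comparison $f\mapsto f(U'.\,-)$ compatible with evaluation, hence equal to the canonical $\phi_A$; uniqueness does \emph{not} follow from $\tilde\Salpha=U\Salpha$ alone, since $\Salpha$ need not be invertible in a general quasi-Hopf algebra. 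Alternatively, uniqueness is a three-line algebraic check: if $U,U'$ both work, then $W=U^{-1}U'$ commutes with the image of $S$ and satisfies $W\Salpha=\Salpha$ and $\Sbeta W=\Sbeta$, whence multiplying $\one=\sum_{(\as)}S(\as')\,\Salpha\,\as''\,\Sbeta\,S(\as''')$ from \eqref{eq:Salpha-2} by $W$ on the right and sliding $W$ past $S(\as''')$ gives $W=\one$.
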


So, $S$, $\Salpha$ and $\Sbeta$ are uniquely determined up to the conjugation by a unique element $U$.

\newcommand{\ias}{\bar{\as}}

\begin{Prop}\label{prop:antipode}
If $A$ is a Hopf algebra and $\as\in A^{\tensor3}$ is an invertible counital element satisfying~\eqref{eq:as-intertwiner} and the $3$-cocycle condition~\eqref{eq:3-cocycle} then 
\begin{enumerate}
\item the element
\begin{equation}\label{eq:Sbeta-Psi}
\Sgamma\equiv\sum_{(\as)}S(\as') \as''  S(\as''')=\sum_{(\as^{-1})}(\as^{-1})' S((\as^{-1})'')  (\as^{-1})'''
\end{equation}
is central.
\item assuming that $\Sgamma$ has an inverse,
$(A,\as)$ is a quasi-Hopf algebra with the same antipode $S$ and $\Salpha=\one$ and $\Sbeta=\Sgamma^{-1}$.
\end{enumerate}
\end{Prop}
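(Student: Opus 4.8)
The plan is to separate the statement into its genuinely computational core and a formal reduction. Throughout I would exploit that $A$ is an honest Hopf algebra: its coproduct is coassociative and $S$ satisfies the true antipode axioms $\sum_{(a)}S(a')a'' = \epsilon(a)\one = \sum_{(a)}a'S(a'')$. In particular, since $(\Delta\tensor\id)(\Delta(a)) = (\id\tensor\Delta)(\Delta(a))$, the intertwiner condition \eqref{eq:as-intertwiner} reduces to the statement that $\as$ commutes with $(\id\tensor\Delta)(\Delta(a))$ for every $a\in A$. I would first grant part (1) and deduce part (2) from it by direct substitution, and only then attack part (1), which carries all the content.

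For part (2), I would set $\Salpha=\one$ and $\Sbeta=\Sgamma^{-1}$; as $\Sgamma$ is central by part (1), so is $\Sbeta$. The first identity in \eqref{eq:Salpha-1} then reads $\sum_{(a)}S(a')a'' = \epsilon(a)\one$, which is the left antipode axiom of $A$, while the second reads $\sum_{(a)}a'\Sgamma^{-1}S(a'') = \Sgamma^{-1}\sum_{(a)}a'S(a'') = \epsilon(a)\Sgamma^{-1}$, using centrality and the right antipode axiom. For \eqref{eq:Salpha-2}, centrality lets me pull $\Sbeta$ to the front: the first identity becomes $\sum_{(\as)}S(\as')\as''\Sgamma^{-1}S(\as''') = \Sgamma^{-1}\Sgamma = \one$ by the first expression for $\Sgamma$ in \eqref{eq:Sbeta-Psi}, and the second becomes $\sum_{(\as^{-1})}(\as^{-1})'\Sgamma^{-1}S((\as^{-1})'')(\as^{-1})''' = \Sgamma^{-1}\Sgamma = \one$ by the second expression. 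Thus $(A,\as,S,\one,\Sgamma^{-1})$ satisfies Definition \bref{def:quasi-Hopf_ass1}, proving part (2). Note how both halves of \eqref{eq:Sbeta-Psi} are used, one per equation of \eqref{eq:Salpha-2}.

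The real work is part (1): that the two expressions in \eqref{eq:Sbeta-Psi} agree and define a central element. First I would record the two remaining counit normalisations $(\epsilon\tensor\id\tensor\id)(\as) = \one\tensor\one = (\id\tensor\id\tensor\epsilon)(\as)$, obtained by applying $\epsilon$ to an outer leg of the $3$-cocycle identity \eqref{eq:3-cocycle} and using \eqref{eq:counital}. Equality of the two forms of $\Sgamma$ would then follow by contracting \eqref{eq:3-cocycle} with $S$ and $\epsilon$ in appropriate legs and telescoping with the antipode axioms; this step genuinely needs the cocycle condition, not merely $\as\as^{-1}=\one$. The hard part will be centrality, $a\Sgamma=\Sgamma a$. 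The intertwiner property \eqref{eq:as-intertwiner} alone does not suffice, because the map $x\tensor y\tensor z\mapsto S(x)\,y\,S(z)$ interleaves the three legs, so no single antipode contraction collapses the $a$-dependence. Instead I would use that each inflated factor of \eqref{eq:3-cocycle} (e.g.\ $(\Delta\tensor\id\tensor\id)(\as)$) commutes with the correspondingly iterated coproduct of $a$ — a consequence of \eqref{eq:as-intertwiner} and coassociativity, since $\Delta\tensor\id$ etc.\ are algebra maps — and then apply a contraction by $S$, the multiplication and the counit in chosen tensor slots to reduce the four-fold pentagon to $a\Sgamma=\Sgamma a$. I would arrange the computation so that the equality of the two expressions and centrality emerge from the same contraction of \eqref{eq:3-cocycle}; this is the argument of \cite[Sec.\,1]{Dr-quasi}, specialised to the present coassociative situation.
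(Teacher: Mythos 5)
Your part (2) is exactly the paper's argument: given centrality and invertibility of $\Sgamma$, the identities \eqref{eq:Salpha-1} with $\Salpha=\one$, $\Sbeta=\Sgamma^{-1}$ reduce to the honest antipode axioms of $A$, and the two expressions in \eqref{eq:Sbeta-Psi} deliver precisely the two identities in \eqref{eq:Salpha-2}. Your sketch of the equality of the two expressions also matches the paper's route: the paper rewrites the cocycle condition as $(\as^{-1}\tensor\one)\cdot(\Delta\tensor\id\tensor\id)(\as)\cdot(\id\tensor\id\tensor\Delta)(\as)=(\id\tensor\Delta\tensor\id)(\as)\cdot(\one\tensor\as)$ and applies the single contraction $\psi=\mu^A\circ(\mu^A\tensor\mu^A)\circ(\id\tensor S\tensor\id\tensor S)$, where $\mu^A$ is the multiplication; the antipode axioms and the counit properties of $\as$ --- including the outer ones $(\epsilon\tensor\id\tensor\id)(\as)=\one\tensor\one=(\id\tensor\id\tensor\epsilon)(\as)$, which you correctly note must first be extracted from \eqref{eq:counital} and \eqref{eq:3-cocycle} --- then collapse the two sides to the two expressions for $\Sgamma$.

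The genuine gap is centrality. Your premise that ``the intertwiner property \eqref{eq:as-intertwiner} alone does not suffice'' is false, and the pentagon-based substitute you offer is left as an unspecified ``contraction in chosen tensor slots'' which does not work as described: writing $\Delta^{(3)}$ for the threefold iterated coproduct, the natural contractions of $\Delta^{(3)}(a)\cdot t=t\cdot\Delta^{(3)}(a)$ that close all four legs collapse the $a$-dependence to a scalar, e.g.\ $\psi\bigl(t\cdot\Delta^{(3)}(a)\bigr)=\epsilon(a)\,\psi(t)$, so they can never leave $a\Sgamma$ on one side and $\Sgamma a$ on the other; and if you keep a pair of legs open so as to retain a free factor of $a$, you are down to a three-leg identity in which the pentagon no longer enters. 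The idea you are missing is the telescoping insertion $\epsilon(a'')\one=\sum_{(a)}S(a'')a'''$, which is exactly how the paper proves centrality from \eqref{eq:as-intertwiner} alone. Abbreviating $\bar\as:=\as^{-1}$, one computes
\begin{equation*}
a\Sgamma \;=\; \sum_{(a)(\bar\as)} a'\,\bar\as{}'\, S(\bar\as{}'')\,\epsilon(a'')\,\bar\as{}'''
\;=\; \sum_{(a)(\bar\as)} a'\,\bar\as{}'\, S(a''\bar\as{}'')\, a'''\,\bar\as{}''' \ ;
\end{equation*}
since for coassociative $\Delta$ condition \eqref{eq:as-intertwiner} states $\Delta^{(2)}(a)\,\as^{-1}=\as^{-1}\,\Delta^{(2)}(a)$, applying $\id\tensor S\tensor\id$ exchanges $\sum a'\bar\as{}'\tensor S(a''\bar\as{}'')\tensor a'''\bar\as{}'''$ for $\sum \bar\as{}'a'\tensor S(\bar\as{}''a'')\tensor \bar\as{}'''a'''$, and a final collapse with the antipode axiom gives $\sum \bar\as{}'\,\epsilon(a')\,S(\bar\as{}'')\,\bar\as{}'''\,a''=\Sgamma a$. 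Thus the cocycle condition is needed only for the equality of the two expressions in \eqref{eq:Sbeta-Psi} (which, as you correctly use in part (2), feeds the two identities in \eqref{eq:Salpha-2}); centrality itself follows from the intertwining property together with the Hopf antipode axioms, via the telescoping step your plan omits.
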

\begin{proof}
To prove the  second equality in~\eqref{eq:Sbeta-Psi} we use the
condition~\eqref{eq:3-cocycle}  in the following form
\begin{equation}\label{eq:3-cocycle-2}
 (\as^{-1}\tensor\one)\cdot(\Delta\tensor\id\tensor\id)(\as)\cdot (\id\tensor\id\tensor\Delta)(\as) =(\id\tensor\Delta\tensor\id)(\as)\cdot(\one\tensor\as).
\end{equation}
and apply on both sides the linear map 
$\psi=\mu^A\circ (\mu^A\tensor\mu^A)\circ(\id\tensor S\tensor \id\tensor S)$, where $\mu^A$ is the multiplication in $A$. Computing the image of the map $\psi$, we use the properties
$$\psi\bigl(t\cdot (\Delta(a)\tensor\one\tensor\one)\bigr)=\psi\bigl(t\cdot (\one\tensor\one\tensor\Delta(a))\bigr)=
\psi\bigl((\one\tensor\Delta(a)\tensor\one)\cdot t\bigr)=\epsilon(a)\psi(t)\ ,$$
for any $t\in A^{\tensor4}$ and $a\in A$, together with the counital properties of $\as$. The right-hand side of~\eqref{eq:3-cocycle-2} under $\psi$ is then $\sum_{(\as)}S(\as') \as''  S(\as''')$,  and $\sum_{(\as^{-1})}(\as^{-1})' S((\as^{-1})'')  (\as^{-1})'''$ is for  the left-hand side. We thus see that~\eqref{eq:Sbeta-Psi} is true. To prove 
 that $\Sgamma$ is central, we first note the identities
$a=\sum_{(a)}a'\epsilon(a'') = \sum_{(a)} a' S(a'') a'''$,
 for any $a\in A$, because $A$ is a Hopf algebra,
and apply them for the product $a\Sgamma$ (with the short-hand notation $\ias=\as^{-1}$):
\begin{equation*}
a\Sgamma = \sum_{(a)(\ias)}a'\ias' S(\ias'') \epsilon(a'') \ias''' = \sum_{(a)(\ias)}a'\ias' S(\ias'')  S(a'') a''' \ias''' = 
 \sum_{(a)(\ias)}a'\ias' S(a'' \ias'') a''' \ias'''\ .
\end{equation*}
We apply the linear map $\id\tensor S\tensor \id$ on~\eqref{eq:as-intertwiner} multiplied by $\as^{-1}$ on the left, which results in
\begin{equation*}
\sum_{(a)(\ias)} a' \ias'\tensor S(a'' \ias'')\tensor a'''\ias''' = \sum_{(a)(\ias)}  \ias' a'\tensor S(\ias'' a'')\tensor \ias''' a'''\ .
\end{equation*}
Then, we can continue with rewriting $a\Sgamma$:
\begin{equation*}
a\Sgamma =  \sum_{(a)(\ias)}\ias' a' S(\ias'' a'') \ias''' a''' =  \sum_{(a)(\ias)}\ias' \epsilon(a') S(\ias'') \ias''' a'' = \Sgamma a\ ,
\end{equation*}
i.e., $\Sgamma$ is central indeed. 

 For the second point, $(A,\as)$ is a quasi-bialgebra by Definition~\bref{def:quasi-bialgebra}.
Using the centrality of $\Sgamma$ and the assumption that it has the inverse, the conditions~\eqref{eq:Salpha-1} are satisfied with $\Salpha=\one$ and $\Sbeta=\Sgamma^{-1}$.
Using~\eqref{eq:Sbeta-Psi}, $\Sbeta$ and $\Salpha$ satisfy~\eqref{eq:Salpha-2}, and thus $(A,\as)$ is the quasi-Hopf algebra.
\end{proof}

 A quasi-triangular quasi-Hopf algebra $A$ is called \textit{ribbon}  if
it contains a ribbon element $\ribbon$ defined in the same way as for ordinary Hopf algebras, see~\cite{Yorck}:

\begin{Dfn}\label{def:ribbonel}
A nonzero central element $\ribbon\in A$ is called a ribbon element if it
satisfies
\begin{equation}\label{def-ribbon}
  \Delta(\ribbon)=M^{-1}(\ribbon\tensor\ribbon),\qquad
 S(\ribbon)=\ribbon.
\end{equation}
\end{Dfn}

In a ribbon quasi-Hopf algebra $A$, we have the identities~\cite{[AC],Yorck}
\begin{equation}\label{u-ribbon}
   \ribbon^2= \sqs S(\sqs),\quad \epsilon(\ribbon)=1,
\end{equation}
where $\sqs$ is the (generalisation of the) canonical Drinfeld element defined as
\begin{equation}\label{sqs}
\sqs = \sum_{(\as),(R)} S(\as''\Sbeta S(\as'''))S(R'')\Salpha R'\as'
\end{equation}
and it satisfies $S^2(a) = \sqs a\sqs^{-1}$, for any $a\in A$.
The action by $\sqs$ is a canonical intertwiner between any $A$-module $U$ and its double dual $U^{**}$. 
Recall that the (left) dual $U^*$ for $U$ in $\rep A$ is defined as the vector space of $\oC$-linear maps $U\to\oC$ and the left $A$-action on $U^*$ is
\begin{equation}\label{eq:dualU}
a\cdot f(u) = f(S(a)u),\qquad u\in U,\quad f\in U^*, \quad a\in A.
\end{equation}
This is as in the case of Hopf algebras.

\section{$SL(2,\oZ)$-action on the centre of the quasi-Hopf algebra $(\Q,\as)$}\label{app:SL2Z} 
In this section, we first
recall the standard $SL(2,\oZ)$-action~\cite{[LM],[Lyu]} 
for a factorisable Hopf algebra, following conventions in~\cite{[FGST]}, and 
reformulate it for the centre 
$\cZ\equiv\cZ(\Q)$ 
of our quasi-Hopf algebra~$(\Q,\as)$.  Its definition
involves the ribbon element and the Drinfeld and Radford mappings.
 We then establish for $\beta=e^{\pm\rmi\pi/4}$
the equivalence to the $SL(2,\oZ)$-representation obtained in~\cite{[FGST]}.

\subsection{Notations and general definitions}\label{app:SL2-defs}
We define the representation $\repLy$ of $\SLiiZ$ on the centre $\cZ$ of $\Q$
as follows:
the operators
$\modS\equiv\repLy(S):\cZ\to\cZ$ and $\modT\equiv\repLy(T):\cZ\to\cZ$
are
\begin{equation}\label{TS-def}
  \modS(a) =
  \radmap\bigl(\drmap^{-1}(a)\bigr),
  \quad
  \modT(a)=b\,\modS^{-1}\bigl(\ribbon\bigl(\modS(a)\bigr)\bigr),
  \qquad a\in\cZ,
\end{equation}
where $\ribbon$ is the ribbon element, $\drmap$ is the Drinfeld
mapping, $\radmap$ is the Radford mapping, and $b$ is a
normalisation factor which will be fixed later as
\begin{equation}\label{eq:projectivity-factor}
	b = \beta^2 e^{2 \pi \rmi /3} \ .
\end{equation}
Note that in the symplectic fermion case, $\beta = e^{- \pi \rmi/4}$ and so $b = e^{-2\rmi\pi c/24}$ with $c=-2$.

We recall now the definition of the main ingredients, the  Drinfeld and Radford mappings
for quasi-triangular Hopf algebras,
see also~\cite[App.\,A]{[FGST]}. 
Given the $M$-matrix for $A$,  i.e., $M= R_{21} R\in A\tensor A$, \textit{the Drinfeld mapping}
$\drmap:A^*\to A$ is defined as
\begin{equation}\label{drinfeld-def}
  \drmap(\varphi)=(\varphi\tensor\id)M \ .
\end{equation}
A quasi-triangular (quasi-)Hopf algebra is  called \textit{factorisable}  if the map $\drmap$ is surjective.
It is well known~\cite{[Drinfeld]} that
  in a factorisable Hopf algebra $A$, the Drinfeld mapping
  $\drmap:A^*\to A$ intertwines the adjoint and coadjoint actions of
  $A$ and its restriction to the space $\Ch(A)$ of $q$-characters gives
  an isomorphism of associative algebras $\Ch(A)\xrightarrow{\sim}\cZ(A)$,
  where $\cZ(A)$ is the space of the adjoint-action ($\ad_a(x)=\sum_{(a)} a'xS(a'')$) invariants, the centre of $A$, while $\Ch(A)$ is by definition 
  the space of invariants in $A^*$ with respect to the coadjoint action of $A$, or equivalently
\begin{equation*}
  \Ch(A)
  = \bigl\{\varphi\in A^* \bigm| \varphi(xy)=\varphi\bigl(S^2(y)x\bigr)
  \quad \forall x,y\in A\bigr\}.
\end{equation*}

For a Hopf algebra $A$, a \textit{right integral}~$\rint$ is a linear
functional on $A$ satisfying
\begin{equation*}
  (\rint\tensor\id)\Delta(x)=\rint(x)\one
\end{equation*}
for all $x\,{\in}\, A$.  
Whenever such a functional exists, it is unique up to multiplication with a nonzero constant.
For a factorisable Hopf algebra, the integral can be  normalised~\cite[Sec.~3.8]{[Lyu]} (up to a sign) by requiring 
\begin{equation}\label{mu-mu}
(\rint\tensor\rint)(M)=1\ .
\end{equation}
The left--right \textit{cointegral}~$\coint$ 
is an element in $A$ such
that
\begin{equation*}
  x\coint=\coint x =\epsilon(x)\coint\ ,\quad\forall x\in A\ .
\end{equation*}
	We normalise the cointegral by requiring $\rint(\coint)=1$.
Let $A$ be a
Hopf algebra with right integral $\rint$ and left--right
cointegral~$\coint$.  \textit{The Radford mapping} $\radmap:A^*\to A$ and its
inverse $\radmap{}^{-1}:A\to A^*$ are given by
\begin{equation}\label{radford-def}
  \radmap(\varphi)
  =\sum_{(\coint)}\varphi(\coint')\coint''\ ,
  \quad
  \radmap{}^{-1}(x)=\rint(S(x)\,-)\ ,
\end{equation}
where `$-$' stands for an argument from $A$. The map $\radmap$ has the important property that it intertwines the coregular and regular actions of $A$ on $A^*$ and $A$, respectively.

Below we will apply these expressions for the Drinfeld and Radford mapping to our quasi-Hopf algebra $(\Q,\as)$.
This is motivated by the fact that the definition of adjoint and  regular representations is the same, and so their duals are also the same, see the note around~\eqref{eq:dualU} 
	(and of course by the outcome that we do indeed get an $\SLiiZ$-action on $\cZ(\Q)$).
The main difference to the Hopf-algebra case appears in the definition of the balancing element and so in a special basis of $q$-characters, which we discuss now.

In order to compute the $\SLiiZ$-action~\eqref{TS-def} explicitly, we need a basis in the space~$\Ch(A)$ of $q$-characters. 
In a ribbon quasi-Hopf algebra, we define the balancing element as~\cite{[AC]}
\begin{equation}\label{balance-ribbon}
  \balance=\Sbeta S(\Salpha)\ribbon^{-1}\sqs \ ,
\end{equation}
with the canonical Drinfeld element $\sqs$ defined in~\eqref{sqs}.
The balancing element $\balance$ is group-like and allows
constructing the ``canonical'' $q$-character of an $A$-module~$V$:
\begin{equation}\label{qCh}
\qtr_{V} \equiv\mathrm{Tr}_{V}(\balance^{-1}\,-)
  \in\Ch(A) \ .
\end{equation}
The map $\qtr: V\mapsto \qtr_{V} $ defines then a homomorphism of the Grothendieck ring of $\rep A$ to the ring of
  $q$-characters.
 
\subsection{The $\SLiiZ$-action on $\cZ$ of $(\Q,\as)$}
For our quasi-Hopf algebra $(\Q,\as)$, with the $M$-matrix in~\eqref{bar-M-q=i}, the normalised right integral $\rint$ and the left-right cointegral $\coint$ are
\begin{equation}\label{int-coint}
\rint(\E^m\F^n\K^l) = \ffrac{\beta^2}{\rmi} \delta_{m,1} \delta_{n,1}\delta_{l,3} 
\qquad , \qquad
\coint = \ffrac{\beta^2}{\rmi} \E\F\sum_{j=0}^3 \K^j \ ,
\end{equation}
where we assume our usual condition $\beta^4=-1$, so the coefficient in~\eqref{int-coint} is just a sign 
(which is not fixed by~\eqref{mu-mu} and the choice $\frac{\beta^2}{\rmi}$ is our convention).

Using~\eqref{balance-ribbon}, we compute the balancing element
\begin{equation}
\balance = \K^{\pm1}\ ,\qquad \text{for}\quad \beta^2=\mp\rmi\ .
\end{equation}
We now compute the image of the Grothendieck ring of $\rep\Q$ in the centre $\cZ$ using the composition $\drmap\circ\qtr$, see~\eqref{drinfeld-def} and~\eqref{qCh}:
  \begin{equation}
  \XX^{\pm}_s \mapsto \cchi^{\pm}_s\equiv \drmap\bigl(\qtr_{\XX^{\pm}_s}\bigr) = \sum_{(M)}\mathrm{Tr}_{\XX^{\pm}_s}(\balance^{-1}M')M''\ ,\qquad s=1,2\ ,
  \end{equation}
  using the $M$-matrix in~\eqref{bar-M-q=i}. For $s=1$, the image does not depend on $\beta$:
  \begin{equation}
  \cchi^+_1=\one\ ,\qquad \cchi^-_1=-\K^2\ ,
  \end{equation}
  while for $s=2$ the image depends on $\beta^2$:
  \begin{equation}
  \begin{split}
  \cchi^+_2=4\cas\ ,\qquad \cchi^-_2=-4\cas\K^2\ ,&\qquad \text{for}\quad \beta^2=\rmi\ ,\\
  \cchi^+_2=-4\cas\K^2\ ,\qquad \cchi^-_2=4\cas\ ,&\qquad  \text{for}\quad \beta^2=-\rmi\ ,
  \end{split}
  \end{equation}
where $\cas$ is the Casimir element defined under \eqref{eq:intro-al-be-def}.
This result agrees with the one in~\cite{[FGST]} corresponding to $\beta=e^{\pi\rmi/4}$.
Recall that the centre  $\cZ$ is $5$-dimensional~\cite{[FGST]} and is spanned by the  idempotents $\idem_0$, $\idem_1^{\pm}$ 
defined in~\eqref{idem-01} and~\eqref{idem-pm-1}, and the two nilpotents $\nilp^{\pm}$ defined just before~\eqref{ribbon-decomp}. 
Using the functionals  $\qtr_{\XX^{\pm}_s}$, we have found the four basis elements as images of $\drmap$, while to construct the fifth one could use the pseudo-trace from~\cite[Sec. 3.2]{[GT]}, a $q$-character associated to the projective module $\PP^+_1\oplus \PP^-_1$.
The composition $\radmap\circ\qtr$ is evaluated as
  \begin{equation}
  \XX^{\pm}_s \mapsto \radmap^{\pm}_s\equiv \radmap\bigl(\qtr_{\XX^{\pm}_s}\bigr) = \sum_{(\coint)}\mathrm{Tr}_{\XX^{\pm}_s}(\balance^{-1}\coint')\coint''\ ,\qquad s=1,2\ ,
  \end{equation}
  with the cointegral in~\eqref{int-coint}. The images have the following  dependence  on $\beta$:
  \begin{equation}
  \radmap^{\pm}_1=\ffrac{4\beta^2}{\rmi}\nilp^{\pm}\ , \qquad
  \radmap^{\pm}_2=\pm 4\idem^{\pm}_1\ .
  \end{equation}

Now, we are ready to formulate the main result of this section.

\begin{Prop}\label{prop:sl2z-action}
The $\SLiiZ$-action~\eqref{TS-def} on the centre $\cZ$ of the quasi-Hopf algebra  $(\Q,\as)$ has the decomposition
\begin{equation}\label{Z-decomp}
\cZ|_{\SLiiZ} = \oC^2 \oplus \oC^3\ .
\end{equation}
Introducing the basis $\oC^2=\langle\rrho,\vvarphi\rangle$ for the first summand and $\oC^3=\langle\bkappa_0,\bkappa_1,\bkappa_2\rangle$ for the second, the $\SLiiZ$ action is
\begin{align}\label{C2}
\modS(\rrho)&= -\rmi \vvarphi\ ,& \modS(\vvarphi) &= \rmi \rrho\ ,\\
\modT(\rrho) &= b \rrho\ , 
&\modT(\vvarphi) &= b (\vvarphi - \rmi\beta^{-2} \rrho)\ ,
\end{align}
and 
\begin{align}
\modS(\bkappa_0)&= \half(\bkappa_0 - 2\bkappa_1 + \bkappa_2)\ ,& \modS(\bkappa_1) &= \half(\bkappa_2 - \bkappa_0)\ , & \modS(\bkappa_2) &=\half(\bkappa_0 + 2\bkappa_1 + \bkappa_2)\ ,\label{C3-S}\\
\modT(\bkappa_0) &= -b\beta \bkappa_0\ , &\modT(\bkappa_1) &= b \bkappa_1\ , 
&\modT(\bkappa_2) &= b\beta \bkappa_2\ , \label{C3}
\end{align}
with $b$ as in \eqref{eq:projectivity-factor}.
At $\beta=e^{-\rmi\pi/4}$, this action is equivalent to the ``standard'' 
$\SLiiZ$-representation in~\cite{[FGST]}.
\end{Prop}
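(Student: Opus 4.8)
The plan is to compute the two generators $\modS$ and $\modT$ of \eqref{TS-def} explicitly as matrices on a basis of the five-dimensional centre $\cZ$, and then to block-diagonalise the resulting pair into the claimed $\oC^2\oplus\oC^3$ form. Most of the raw input is already assembled above: the Drinfeld map $\drmap$ and the Radford map $\radmap$ have been evaluated on the four canonical $q$-characters $\qtr_{\XX^\pm_s}$, producing the central elements $\cchi^\pm_s=\drmap(\qtr_{\XX^\pm_s})$ and $\radmap^\pm_s=\radmap(\qtr_{\XX^\pm_s})$ in the basis $\{\idem_0,\idem_1^\pm,\nilp^\pm\}$. First I would confirm factorisability, i.e.\ that $\drmap:\Ch(\Q)\to\cZ$ is an isomorphism, by checking surjectivity directly from the explicit $M$-matrix \eqref{bar-M-q=i}; this is the point where one must verify that the Hopf-algebraic Drinfeld/Radford formulas retain their intertwining and isomorphism properties for the quasi-Hopf algebra $(\Q,\as)$. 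Granting this, the definition $\modS=\radmap\circ\drmap^{-1}$ yields at once $\modS(\cchi^\pm_s)=\radmap^\pm_s$ for these four vectors.

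The first genuine step is to supply the fifth basis direction. The four $q$-characters of the simple modules span only a four-dimensional subspace of the five-dimensional $\Ch(\Q)$; the missing direction is furnished by the pseudo-trace $q$-character attached to the projective module $\PP^+_1\oplus\PP^-_1$, as flagged in the text. I would compute its images under $\drmap$ and $\radmap$ in the idempotent/nilpotent basis of $\cZ$, thereby completing the matrix of $\modS$. This is where the passage from Hopf to quasi-Hopf must be handled carefully, since the pseudo-trace rests on the symmetric structure encoded by the cointegral \eqref{int-coint} and the balancing element $\balance=\K^{\pm1}$ rather than on an ordinary module trace.

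With $\modS$ explicit, I would read off $\modT$ from \eqref{TS-def} as $\modT=b\,\modS^{-1}\circ(\ribbon\,\cdot\,)\circ\modS$, where multiplication by the ribbon element $\ribbon$ is explicit on the basis from its decomposition \eqref{ribbon-decomp}; hence $\modT$ becomes an explicit matrix as well. I would then verify the defining relations $\modS^2=\id$ and $(\modS\modT)^3=\modS^2$, the latter fixing the normalisation $b=\beta^2 e^{2\pi\rmi/3}$ of \eqref{eq:projectivity-factor} and confirming that one genuinely obtains an $\SLiiZ$-action. Simultaneously diagonalising the commuting data produces the invariant splitting: the three-dimensional summand is spanned by the images of the simple-module characters under $\drmap$, on which $\modS$ acts by the charge-conjugation-type involution \eqref{C3-S}, while the two-dimensional summand is the logarithmic/projective part built from $\nilp^\pm$ together with the pseudo-trace direction. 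Choosing $\bkappa_0,\bkappa_1,\bkappa_2$ and $\rrho,\vvarphi$ adapted to these subspaces reproduces \eqref{C2}--\eqref{C3} directly.

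Finally, for the equivalence with the representation of \cite{[FGST]} at $\beta=e^{-\rmi\pi/4}$, I would specialise $\beta$ so that $b=e^{-2\pi\rmi c/24}$ with $c=-2$, and exhibit an explicit intertwiner between the two five-dimensional $\SLiiZ$-modules: both decompose as $\oC^2\oplus\oC^3$ with identical $S$- and $T$-matrices after a change of basis, so matching the bases sector by sector gives the isomorphism. The main obstacle throughout is the fifth, pseudo-trace, basis vector together with the justification that the (co)integral, Drinfeld and Radford constructions behave as in the Hopf case for $(\Q,\as)$; once these are secured, what remains is the (computer-assisted) linear algebra of transporting and block-diagonalising $\modS$ and $\modT$.
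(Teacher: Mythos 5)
Your proposal is correct and follows essentially the same route as the paper's proof: both rely on the precomputed images $\cchi^{\pm}_s=\drmap(\qtr_{\XX^{\pm}_s})$ and $\radmap^{\pm}_s=\radmap(\qtr_{\XX^{\pm}_s})$, complete the fifth direction with the pseudo-trace $q$-character of $\PP^+_1\oplus\PP^-_1$ (whose $\drmap$- and $\radmap$-images the paper computes precisely in order to obtain $\modS(\vvarphi)=\rmi\rrho$, i.e.\ $\modS^2|_{\cZ}=\id$, by a direct argument since the usual Hopf-algebra proof of this fact is unavailable in the quasi-Hopf setting), then verify $\modS^2=\id$ and $(\modS\modT)^3=\one$ and match bases with the representation of \cite{[FGST]} at $\beta=e^{-\rmi\pi/4}$. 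The only slip is descriptive, not structural: the $\oC^2$ summand is not purely the ``projective/pseudo-trace part'', since it contains the simple-character combination $\rrho=\half(\cchi^+_1-\cchi^-_1)=\idem_0$ while $\bkappa_1=\cchi^+_1+\cchi^-_1$ lies in the $\oC^3$ --- but your explicit block-diagonalisation would produce the correct adapted basis in any case.
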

\begin{proof}
We set for the basis in the $\oC^3$ component in~\eqref{Z-decomp}
\begin{equation}
\bkappa_0 = \cchi^-_2,\qquad\bkappa_1 = \cchi^+_1 + \cchi^-_1, \qquad\bkappa_2 = \cchi^+_2 
\end{equation}
and for the $\oC^2$ component:
\begin{equation}
\rrho = \half(\cchi^+_1 - \cchi^-_1),\qquad
\vvarphi  = \ffrac{\rmi}{2}(\radmap^+_1-\radmap^-_1).
\end{equation}
For the $\oC^3$ component, it is then a simple check that the $\modS$-action is given by~\eqref{C3-S}.
The first part of~\eqref{C2} is obvious while during the calculation of the second equality in~\eqref{C2}, it is important to note that $\modS^2$
 acts as the identity on the centre $\cZ$ of~$\Q$,  i.e., 
 $\modS^2|_{\cZ}=\id$~\footnote{For a factorisable Hopf algebra,  $\modS^2$ acts via 
the antipode~\cite{[LM]}.
Since for $\UresSL2$ the antipode acts as the identity on the centre, so does $\modS^2$.
 This general property was actually used in~\cite{[FGST]} during the calculation of $\modS$. In our  context of  \textsl{quasi}-Hopf algebras, we are not aware of this property in general and should thus give a direct argument.  We express 
 $\vvarphi=-\rmi\beta^2\drmap(\gamma(1))$, where $\gamma(1)=\mathrm{Tr}_{\PP^+_1\oplus\PP^-_1}(
 \balance^{-1}-\sigma_1
 )$ is the ``pseudo-trace'' $q$-character 
from~\cite{[GT]} and $\sigma_1:\PP^+_1\oplus\PP^-_1\to\PP^+_1\oplus\PP^-_1$ is a linear map defined in~\cite[eq.~(3.10) with $a_0=1$]{[GT]}. Then we compute $\radmap(\gamma(1))=\beta^2\idem_0$ and therefore $\modS(\vvarphi)=\radmap\circ\drmap^{-1}(\vvarphi) = -\rmi\beta^2\radmap(\gamma(1))=\rmi\rrho$. And so $\modS^2$ acts on $\cZ$ by the identity for any value of $\beta$ indeed.}.
 The ribbon element decomposition~\eqref{ribbon-decomp} simplifies the calculation of the $\modT$ action as well. 
In addition to $\modS^2=\one$ one also easily checks the relation $(\modS\modT)^3=\one$.

The mapping 
 \begin{equation*}
 \rrho\mapsto\rrho(1),\qquad \vvarphi\mapsto\vvarphi(1), \qquad \bkappa_s\mapsto\bkappa(s),\quad s=0,1,2,
 \end{equation*}
 between our basis and the one in~\cite[Sec.~5]{[FGST]} for $p=2$ establishes the equivalence 
 at (the symplectic fermions value of) $\beta=e^{-\rmi\pi/4}$. 
\end{proof}

Looking at the eigenvalues of $\modT$ in Proposition \bref{prop:sl2z-action} we see that different values of $\beta^2$ give inequivalent representations of $\SLiiZ$. On the other hand, changing $\beta$ to $-\beta$ gives an equivalent representation (via $\bkappa_0 \leftrightarrow \bkappa_2$ and $\bkappa_1 \to -\bkappa_1$).
We have thus obtained two inequivalent $\SLiiZ$ actions on $\cZ$ parametrised by $\beta^2$.

\newcommand\arxiv[2]      {\href{http://arXiv.org/abs/#1}{#2}}
\newcommand\doi[2]        {\href{http://dx.doi.org/#1}{#2}}
\newcommand\httpurl[2]    {\href{http://#1}{#2}}

\end{document}